\numberwithin{equation}{section}
\numberwithin{equation}{section}
\newtheorem{definition}{Definition}[section]
\newtheorem{theorem}{Theorem}[section]
\newtheorem{corollary}{Corollary}[theorem]
\newtheorem{proposition}[theorem]{Proposition}
\newtheorem{lemma}[theorem]{Lemma}
\newtheorem{remark}[theorem]{Remark}
\DeclareMathOperator{\E}{\mathbb{E}}
\DeclareMathOperator{\gmit}{GMtLf(\alpha,\theta)}
\DeclareMathOperator{\Bernoulli}{Bernoulli}
\DeclareMathOperator{\asconv}{\overset{\mathrm{a.s.}}{\longrightarrow}}
\DeclareMathOperator{\iid}{\overset{\mathrm{iid}}{\sim}}
\DeclareMathOperator{\stable}{\mathcal{F}_\infty \textrm{-stable}}
\DeclareMathOperator*{\argmax}{arg\,max}
\newcommand*\diff{\mathop{}\!\mathrm{d}}
\newcommand{\Mit}{{{\mathsf{M}}_{\alpha, \theta}}}
\newcommand{\N}{{N}}
\newcommand{\ep}{{\mathbb{P}_{\alpha, \theta}}}
\newcommand{\normal}{{N}}
\newcommand{\plug}{{\theta_{\mathsf{plug}}}}
\numberwithin{equation}{section} 
\begin{document}

\title{Asymptotic mixed normality of maximum likelihood estimator for Ewens--Pitman partition\thanks{This paper will appear in \emph{Advances in Applied Probability 58.1 (March 2026)}.}}

\author[1]{Takuya Koriyama\thanks{ \href{mailto:tkoriyam@uchicago.edu}{tkoriyam@uchicago.edu}}}

\author[2,3]{Takeru Matsuda
\thanks{
\href{mailto:matsuda@mist.i.u-tokyo.ac.jp}{matsuda@mist.i.u-tokyo.ac.jp}}
}
\author[2,3]{Fumiyasu Komaki
\thanks{\href{mailto:komaki@g.ecc.u-tokyo.ac.jp}{komaki@g.ecc.u-tokyo.ac.jp}}
}

\affil[1]{The University of Chicago}
\affil[2]{The University of Tokyo}
\affil[3]{RIKEN Center for Brain Science}

\date{\vspace{-15pt}}
\maketitle
\begin{abstract}
  {
  This paper investigates the asymptotic properties of parameter estimation for the Ewens–-Pitman partition with parameters $0<\alpha<1$ and $\theta>-\alpha$. Especially, we show that the maximum likelihood estimator (MLE) of $\alpha$ is $n^{\alpha/2}$-consistent and converges to a variance mixture of normal distributions, where the variance is governed by the Mittag-Leffler distribution. Moreover, we show that a proper normalization involving a random statistic eliminates the randomness in the variance. Building on this result, we construct an approximate confidence interval for $\alpha$. 
  Our proof relies on a stable martingale central limit theorem, which is of independent interest. 
  }
\end{abstract}

\tableofcontents

\section{Introduction}\label{sec:intro}
For any positive integer $n\in \mathbb{N}$ and any positive integer $k\le n$, a \textit{partition} of $[n]:= \{1, 2, \dots, n\}$ into $k$ blocks, denoted by $\{U_i: 1\leq i \leq k\}$, is an unordered collection of nonempty disjoint sets whose union is $[n]$. Now, let $\mathcal{P}_{n}^{k}$ be the set of all partitions of $[n]$ into $k$ blocks, and let $\mathcal{P}_n = \cup_{k=1}^{n} \mathcal{P}_{n}^{k}$ be the set of all partitions of $[n]$. {Denoting by $\{U_i: i\ge 1\}$ the element in $\mathcal{P}_n$}, the Ewens--Pitman partition is {a} distribution on $\mathcal{P}_{n}$ with {the following density parameterized by $(\alpha, \theta)$}:
\begin{align}\label{eq:ep_density}
\frac{\prod_{i=1}^{K_n-1} (\theta + i\alpha )}{\prod_{i=1}^{n-1}(\theta + i)} \prod_{j=2}^{n}
  \left\{
    \prod_{i=1}^{j-1} (-\alpha + i)
  \right\}^{S_{n,j}} \quad\text{for} \quad \begin{array}{l}
      S_{n,j} = \sum_{i\geq 1} \mathbbm{1}\{|U_i| = j\}\\
      K_n = \sum_{j=1}^n S_{n,j}
  \end{array}
\end{align}
Note that $S_{n,j}$ is the number of blocks of size $j$ and $K_n$ is the number of nonempty blocks.
The likelihood formula 
\eqref{eq:ep_density} implies that $(S_{n,j})_{j=1}^n$ is a sufficient statistic.

Now we suppose that the true parameter $(\alpha, \theta)$ satisfies $0<\alpha<1$ and $\theta>-\alpha$. Then, the number of nonempty blocks $K_n$ and the number of blocks of size $j$, denoted by $S_{n,j}$, have the following asymptotics as $n\to\infty$ (see \Cref{lm:ep_asym}):
\begin{figure}[b]
  \centering
\includegraphics[width=0.99\linewidth]{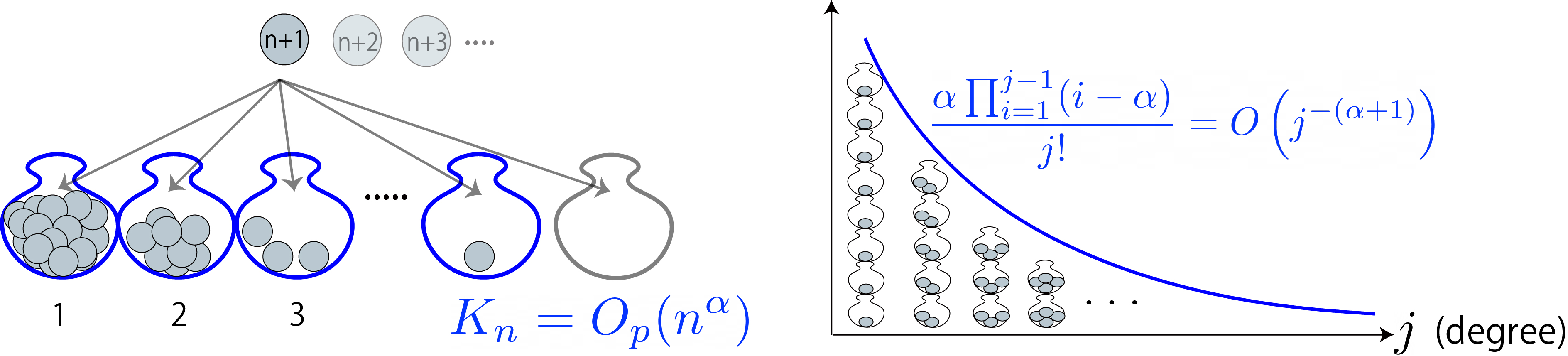}
  \caption{Asymptotic behavior of the Ewens--Pitman partition when $0<\alpha<1, \theta>-\alpha$.
  }
  \label{fig:ep_asymptotics}
\end{figure}
\begin{align}
   \frac{K_n}{n^\alpha} &\asconv \Mit \ \text{(nondegenerate random variable)}\label{eq:intro_sparse}\\
   \forall j\in \mathbb{N}, \quad   \frac{S_{n, j}}{K_n} &\asconv p_\alpha(j):= \frac{\alpha \prod_{i=1}^{j-1}(i-\alpha)}{j!}.\label{eq:intro_power}
\end{align}
See \Cref{fig:ep_asymptotics} for the illustration of the asymptotics \eqref{eq:intro_sparse}-\eqref{eq:intro_power}. We emphasize that the almost sure limit $\Mit = \lim_{n\to\infty} {K_n}/{n^\alpha}$ is not a constant but a nondegenerate positive random variable. Furthermore,  $p_\alpha(j)$ in \eqref{eq:intro_power} is a probability mass function on integers $\mathbb{N}$, and thanks to Stirling's formula $\Gamma(z) \sim \sqrt{{2\pi}/{z}} \left({z}/{e}\right)^z$, we have
$$
p_\alpha(j) \sim  \frac{\alpha}{\Gamma(1-\alpha)} j^{-(\alpha+1)} \quad \text{ as $j\to+\infty$}. 
$$
This implies that the ratio of blocks of particular sizes asymptotically follows a power law of exponents $(\alpha+1)$, as illustrated by the right figure of \Cref{fig:ep_asymptotics}. 

The Ewens--Pitman partition {has many applications } in 
ecology \cite{balocchi2022bayesian, favaro2021near, favaro2009bayesian, sibuya2014prediction},
nonparametric Bayesian inference \cite{caron2017generalized, dahl2017random}, disclosure risk assessment \cite{favaro2021bayesian, hoshino2001applying}, and network analysis \cite{crane2016edge, naulet2021asymptotic},
as well as forensic fields \cite{cereda2022learning}. In those related studies, the estimation of $\alpha$ is of more interest than the estimation of $\theta$ because $\alpha$ controls the asymptotic behavior mainly as we have shown in \eqref{eq:intro_sparse}-\eqref{eq:intro_power}. 
Here, 
\eqref{eq:intro_sparse} implies that the naive estimator $\hat{{\alpha}}_n^{\text{naive}}:= \log K_n / \log n$ is $\log n$-consistent, but it is not rate-optimal, owing to information loss from the sufficient statistic $(S_{n,j})_{j=1}^n$ to $K_n(=\sum_{j=1}^{n} S_{n,j})$.
For the maximum likelihood estimator $(\hat{\alpha}_n,
\hat{\theta}_n)$, \cite{favaro2021near} shows $\hat{\alpha}_n = \alpha + O_p(n^{-\alpha/2}\log n)$, but the exact asymptotic law is unknown. See \Cref{sec:compare} for a detailed review of prior literature. 

\subsection{Contribution}
In this paper, we derive the exact asymptotic distribution of the MLE $(\hat{\alpha}_n, \hat{\theta}_n)$. Here, let us introduce some notation; let $I_\alpha$ be the Fisher information of the heavy-tailed distribution with pmf $p_\alpha(j)$ in \eqref{eq:intro_power}, i.e., 
\begin{align}\label{eq:intro_sibuya_info}
    I_\alpha := - \sum_{j=1}^\infty p_\alpha(j) \cdot \partial_\alpha^2 \log p_\alpha(j) \quad \text{ with }\quad p_\alpha(j):= \frac{\alpha \prod_{i=1}^{j-1}(i-\alpha)}{j!},
\end{align}
and let $f_\alpha :(-1,\infty) \to \mathbb{R}$ be the function
\begin{align}\label{eq:intro_df_f_alpha}
    z\mapsto f_\alpha(z) = \psi(1+z)-\alpha\psi(1+\alpha z),
\end{align}
where $\psi(x)= \Gamma'(x)/\Gamma(x)$ is the digamma function. Note that $f_\alpha$ is bijective (see \Cref{lm:f_alpha}). 
Furthermore, we denote by $\Mit$ the limit of $K_n/n^{\alpha}$
\begin{align}\label{eq:intro_Mit_df}
    \Mit := \lim_{n\to\infty} K_n/n^\alpha. 
\end{align}
Recall that \eqref{eq:intro_sparse} implies $\Mit$ exists almost surely and $\Mit$ is a non-degenerate positive random variable. With the above notation, we show that the asymptotic distribution of the MLE $(\hat{\alpha}_n, \hat{\theta}_n)$ is characterized by (see \Cref{th:smle_asym})
\begin{align}
  \sqrt{n^{\alpha} I_\alpha} \cdot (\hat{\alpha}_n - \alpha) &\to  \N/\sqrt{\Mit} \quad &&(\stable), \label{eq:intro_mle_limit_alpha}\\
  \hat{\theta}_n &\to \alpha \cdot f_\alpha^{-1}(\log \Mit) \quad &&(\text{in probability}), \label{eq:intro_mle_limit_theta}
\end{align}
where $\N$ is a random variable following the standard normal $\normal(0,1)$, which is of $\Mit$, and $f_\alpha^{-1}$ is the inverse of the map $f_\alpha$ in \eqref{eq:intro_df_f_alpha}.  
Note the randomness of the limit distributions comes from the pair $(\N, \Mit)$. Here, $(\stable)$ in \eqref{eq:intro_mle_limit_alpha} is a notion of stochastic convergence stronger than the usual weak convergence (see \Cref{subsec:stable}). \eqref{eq:intro_mle_limit_alpha} and \eqref{eq:intro_mle_limit_theta} imply that $\hat{\alpha}_n$ is $n^{\alpha/2}$-consistent while $\hat{\theta}_n$ is not consistent since $\Mit$ is not constant.

It is important to emphasize that the limit law of $\hat{\alpha}_n$, $\N/\sqrt{\Mit}$, is not normal but a variance mixture of normals. This type of asymptotics is referred to as \textit{asymptotic mixed normality}, which is often observed in ``nonergodic'' or ``explosive'' stochastic processes (see \cite{hausler2015stable}).
By contrast, if we normalize the error $\hat{\alpha}_n-\alpha$ by the random statistic $\sqrt{K_n I_\alpha }$, where $K_n$ is the number of nonempty {blocks} and $I_\alpha$ is the Fisher information in \eqref{eq:intro_sibuya_info}, 
the randomness of variance is canceled out, and the limit law becomes the standard normal:
\begin{align*}
  \sqrt{K_n I_\alpha } (\hat{\alpha}_{n} - \alpha) = \sqrt{K_n/n^\alpha} \cdot \sqrt{n^\alpha I_\alpha} (\hat{\alpha}_{n} - \alpha)
\to \sqrt{\Mit} \cdot \N/\sqrt{\Mit} = \N,
\end{align*}
where we have used 
\eqref{eq:intro_Mit_df}, \eqref{eq:intro_mle_limit_alpha} and 
(generalized) Slutsky's lemma to stable convergence (see \Cref{th:CS_stable}). Informally, $K_n$ (the number of blocks) corresponds to the sample size in typical parametric i.i.d. cases, and $I_\alpha$ quantifies the Fisher information per block. As an immediate application of this result, we get the {approximate} $95\%$ confidence interval $[\hat{\alpha}_n \pm 1.96/\sqrt{K_n I_{\hat{\alpha}_n}}]$. 
We will apply the above result to a hypothesis testing of sparsity for network data (see \Cref{subsec:network}).

\subsection{Prior literature}\label{sec:compare}
Related papers consider the estimation of $\alpha$ under some miss-specified settings. To compare them with our work,  we introduce another representation of the Ewens--Pitman partition. Here, for a nonatomic measure $G$ on $\mathbb{R}$, e.g., $\normal(0,1)$,  the Pitman--Yor process $(\alpha, \theta; G)$ is the discrete random measure $P$ on $\mathbb{R}$ represented by
\begin{align*}
    P:= \sum_{i=1}^\infty p_i \delta_{y_i}, \quad  
    (y_i)_{i=1}^\infty \iid G, \quad  p_i = v_i \prod_{j=1}^{i-1}(1-v_j) \text{ with } v_i \sim \text{Beta}(1-\alpha , \theta + i\alpha),
\end{align*}
where $y_i$ and $v_i$ are independent. Since $P$ is discrete with probability $1$, conditional i.i.d. samples $(X_i)_{i\geq 1} |P \iid P$ induces a partition of $[n]$ by the equivalence relation $i\sim j$ iff $X_i = X_j$. Then, with a nontrivial calculation, we can show that the random partition induced by the Pitman--Yor process$(\alpha, \theta, G)$ has the same density as \eqref{eq:ep_density} of the Ewens--Pitman partition $(\alpha, \theta)$. In fact, any \textit{exchangeable} partitions can be induced by a discrete measure (see \cite{kingman1982coalescent}).

Here, we define the function $L_P :(1, \infty) \to \mathbb{N}$ for any discrete measure $P$ by
\begin{align*}
L_P(x) := \# \left\{
y : P(y) > x^{-1}
\right\}.
\end{align*}
Note that $L_P$ is an increasing function, and the order of $L_P(x)$ as $x\to+\infty$ characterizes the tail behavior of $P$. In particular, $L_P(x) = O(x^\alpha)$ (a.e.) when  $P$ follows Pitman--Yor process$(\alpha, \theta, G)$, i.e., the Pitman--Yor process has a heavy tail of index $\alpha$. More precisely, $x^{-\alpha} L_p(x)$ has a limit as $x\to\infty$ with probability $1$ and the limit is given by
\begin{align}\label{eq:ep_tail}
    \lim_{x\to \infty} x^{-\alpha} 
 L_P(x) =\frac{\Mit}{\Gamma(1-\alpha)} \quad \text{ (with probability $1$)},
\end{align}
where $\Mit$ is a random variable following $\gmit$ (see \cite{pitman2006combinatorial}). 

Previous research has focused on estimating the tail index $\alpha$ of the unknown discrete measure $P$ using the Pitman--Yor process $(\alpha,\theta, G)$ as a prior. Namely, they estimate the tail index $\alpha$ by fitting the Ewens--Pitman partition $(\alpha, \theta)$ to partition data induced by i.i.d. samples from $P$, with $\theta$ regarded as a nuisance parameter. Here, $P$ is allowed to be miss-specified and the assumption on $P$ takes the following form:
\begin{align}\label{eq:tail_general}
\exists L(x) \text{ slowly varying}, \quad \exists r(x) = o(x^\alpha) \quad \text{s.t.} \quad 
|L_P(x) - L(x) x^{\alpha}| \leq r(x).
\end{align}
Observe that \eqref{eq:tail_general} is motivated by \eqref{eq:ep_tail}: the Pitman--Yor process satisfies \eqref{eq:tail_general} with $L(x) = \Mit/\Gamma(1-\alpha)$ and $r(x) = O(x^{\alpha/2} \log x)$, where $\Mit$ is a positive random variable following $\gmit$ (see \cite[Proposition 10]{balocchi2022bayesian}).

Recently, several papers discuss the asymptotics of the MLE $\hat{\alpha}_n$ 
by imposing assumptions on $r(x)$ in \eqref{eq:tail_general};
\cite{favaro2021near} show $\hat{\alpha}_n = \alpha  + O_p(n^{-\alpha/2} \sqrt{\log n})$ and $\hat{\alpha}_n$ is minimax near optimal, under the assumption of \eqref{eq:tail_general} with $L$ being constant and $r(x) = O(x^{\alpha/2} \log x)$.
Comparing it with our rate $\hat{\alpha}_n - \alpha = O_p(n^{-\alpha/2})$ in \Cref{th:smle_asym}, 
we observe that the price for such a misspecification is just $\log n$ factors. 


 In contrast, \cite{balocchi2022bayesian} discuss the asymptotics of $(\hat{\alpha}_n, \hat{\theta}_n)$ under the assumption of \eqref{eq:tail_general} with $L(x) = L$ (a constant) and 
$r(x) = o(x^\alpha/\log x)$, which is weaker than $r(x) = O(x^{\alpha/2} \log x)$ in the previous assumption by \cite{favaro2021near}.
They show 
$(\hat{\alpha}_n, \hat{\theta}_n) \to ^p (\alpha, \Theta)$ where $\Theta$ is a solution to the following nonlinear equation:
\begin{align*}
L \cdot \Gamma(1-\alpha) = \exp(\psi(\Theta/\alpha+1)-\alpha \psi(\Theta + 1))
\end{align*}
If $P$ follows the Pitman--Yor process, we know $L= \Mit/\Gamma(1-\alpha)$ by \eqref{eq:ep_tail}. Combined with the definition $f_\alpha(x) = \psi(x + 1)-\alpha \psi(\alpha x + 1)$ in \eqref{eq:df_f_alpha},  we have
\begin{align*}
    &\log \Mit  = \psi(\Theta/{\alpha} + 1) - \alpha \psi(\Theta + 1) = f_\alpha(\Theta/\alpha)
\end{align*}
and $\Theta = \alpha f_\alpha^{-1} (\log \Mit)$, 
which recover the asymptotics $\hat{\theta}_n\to^p \alpha f_{\alpha}^{-1}(\log\Mit)$ in \Cref{th:smle_asym}. 


After our paper was originally posted, \cite{franssen2022empirical} derived the asymptotic distribution of $\hat{\alpha}_n$ under the assumption \eqref{eq:tail_general} with $r(x) = O( x^{\beta} )$ for some $\beta < \alpha/2$. 
They show that
\begin{align*}
    \sqrt{L_p(n)} (\hat{\alpha}_n - \alpha) \to \normal(0, \tau_1^2/\tau_2^4) \text{ as $n\to\infty$},
\end{align*}
where $\tau_1$ and $\tau_2$ are positive constants. Using our notation, $\tau_1$ can be written by $\tau_2^2 = \Gamma(1-\alpha) I_\alpha$ with $I_\alpha$ being the Fisher Information of the discrete distribution defined by \eqref{eq:sibuya_fisher}, so $\tau_1$ is interpretable.
Combining this with the tail asymptotics $L_p(n) \sim \Mit n^\alpha/\Gamma(1-\alpha) \sim K_n/\Gamma(1-\alpha)$ when $P$ follows the Pitman--Yor process (see \eqref{eq:ep_tail}), we obtain
\begin{align*}
    \sqrt{K_n I_\alpha} (\hat{\alpha}_n - \alpha) = \sqrt{\frac{K_n I_\alpha}{L_p(n)}} \sqrt{L_p(n)} (\hat{\alpha}_n - \alpha) 
    \to \normal \left(0,  \frac{\tau_1^2}{ I_\alpha \Gamma(1-\alpha)}\right).
\end{align*}
Thus, if $\tau_1^2 =\Gamma(1-\alpha) I_\alpha$ holds, the above display coincides with our result in \Cref{cor:mixing_convergence}. However, $\tau_2$ is an involved quantity, so we couldn't check $\tau_1^2 =\Gamma(1-\alpha) I_\alpha$. Furthermore, their assumption $r(x) = o(x^{\beta})$ with $\beta <\alpha/2$ is not satisfied by the Pitman--Yor process, and hence their result does not imply ours. 

{In summary, previous papers studied the estimation of the tail index of the underlying discrete measure by using the Pitman—Yor process as a prior, but to the best of our knowledge, the exact asymptotic distribution of the MLE was unknown.}
Considering the arguments in the previous section, the novelty of this paper is 
the exact asymptotic distribution of the MLE and the confidence interval of $\alpha$. In our proof, as highlighted in \Cref{sec:proof}, we avoid the representation of the Ewens--Pitman partition by the Pitman--Yor process. Instead, we exploit the sequential definition of the Ewens--Pitman partition and its martingale property.

\subsection{Organization}
The remainder of this paper is organized as follows: \Cref{sec:prep} reviews the parameter dependency of the Ewens-–Pitman partition and introduces the concept of stable convergence. In \Cref{sec:main}, we present the main theorem along with its applications to network analysis. Numerical simulations supporting the main theorem are provided in \Cref{sec:Numeric}. The proof strategy is outlined in \Cref{sec:proof}, and \Cref{sec:discussion} concludes with a potential direction for future research.
All proofs are given in the supplementary material. 

\section{Notation and preliminaries}\label{sec:prep}
\subsection{Parameter dependence of the Ewens--Pitman partition}\label{subsec:asym_ep}
In \Cref{sec:intro}, we introduced the Ewens--Pitman partition as a distribution on the set of partitions of $[n]$, denoted by $\mathcal{P}_n$. Here, we introduce an alternative representation of the Ewens--Pitman partition; the Ewens--Pitman partition is a stochastic process over $(\mathcal{P}_n)_{n\geq 1}$ that randomly assigns integers (balls) into blocks (urns) in the following sequential manners: 
\begin{enumerate}
	\item The first ball belongs to urn $U_1$ with probability one.
	\item  Suppose that $n (\geq 1)$ balls are partitioned into $K_n$ occupied urns $\{U_1, \dots U_{K_n}\}$, and let $|U_i|$ be the number of balls in $U_i$. Then, the $(n+1)$th ball is randomly assigned to the existing urns $\{U_1,\dots, U_{K_n}\}$ or new (empty) urn as follows:
	\begin{align*}
			\text{$(n+1)$th ball} \in \left\{\begin{array}{ll}
					U_i & \text{with prob. $\frac{|U_i| -\alpha}{\theta + n}$} \ (\forall i=1, 2, \dots, K_n) \\
					\text{Empty urn} & \text{with prob. $\frac{\theta+K_n\alpha}{\theta + n}$.}
			\end{array}\right.
	\end{align*}
\end{enumerate}
Then, it follows from simple algebra  that the probability of obtaining the partition $\{U_1, \dots, U_{K_n}\}$ of $[n]$ coincides with the likelihood formula \eqref{eq:ep_density}.

Note that the Ewens--Pitman partition has three parameter spaces: $(\mathrm{i})\ \alpha=0, \theta>0$, $(\mathrm{ii})\ \alpha<0, \exists k \in \mathbb{N}\ \text{s.t.} \ \theta = -k\alpha$, and $(\mathrm{iii})\ 0<\alpha<1, \theta>-\alpha$. Below, we briefly explain the parameter dependency. 

\textbf{When $\alpha=0, \theta>0$:} 
This is referred to as the Ewens partition or the standard Chinese restaurant process. By substituting $\alpha=0$ into the likelihood formula \eqref{eq:ep_density}, we observe that the likelihood is proportional to $\theta^{K_n}/(\prod_{i=0}^{n-1}(\theta + i))$, 
so that $K_n$ is a sufficient statistic for $\theta$. Now we define the estimator $\theta^\star_n := K_n/\log n$. Then, we claim that $\theta^\star_n$ is $\sqrt{\log n}$-consistent and asymptotically normal as follows:
$$
(\theta^{-1}\log n)^{1/2} (\theta^\star_n- \theta)\rightarrow \normal(0,1). 
$$
This follows from the following arguments:  By the sequential definition above, $K_n$ can be expressed as independent sum of the Bernoulli random variables
$K_n = \sum_{i=1}^{n} \zeta_i$ where $
    \zeta_i \sim \Bernoulli (\frac{\theta}{\theta + i-1})$. 
Then, 
the Lindeberg-Feller theorem (c.f.\cite[p. 128]{DU19}) gives the asymptotic normality. 

\textbf{When ${\alpha<0, \theta = -k\alpha}$ for some $k\in \mathbb{N}$:} 
In this case, the number of occupied urns $K_n$ is finite, i.e.,
$K_n \rightarrow k$ (a.s.), since the probability of observing a new urn is proportional to $(-\alpha)(k-K_n)$, which is strictly positive until $K_n$ reaches $k$.

\textbf{When $0<\alpha<1, \theta >- \alpha$:}
In this regime, nonstandard asymptotics hold. Before its introduction, let us define some distributions appearing in the asymptotics.
\begin{definition}[Sibuya distribution]\label{df:sibuya}
The Sibuya distribution of parameter $\alpha \in (0,1)$ \cite{sibuya1979generalized}, which is also called the Karlin--Rouault distribution \cite{karlin1967central, rouault1978lois}, is a discrete distribution on $\mathbb{N}$ with its density $p_\alpha(j)$ defined by 
\begin{align}\label{eq:sibuya_density}
  \forall j\in \mathbb{N}, \quad  p_\alpha(j):= \frac{\alpha \prod_{i=1}^{j-1}(i-\alpha)}{j!}.
\end{align}
\end{definition}
Here, Stirling's formula $\Gamma(z) \sim \sqrt{{2\pi}/{z}} \left({z}/{e}\right)^z$ implies that the Sibuya distribution is heavy-tailed in the following sense:
$$
p_\alpha(j) \sim \frac{\alpha}{\Gamma(1-\alpha)}\cdot j^{-(1+\alpha)} \quad \text{as} \quad j\to\infty
$$
Readers may refer to \cite{resnick2007heavy} for its important role in the extreme value theory.

\begin{definition}[Generalized Mittag-Leffler distribution]\label{df:gmit}
Let ${\mathsf{S}_\alpha}$ be a positive random variable of parameter $\alpha\in(0,1)$ with its Laplace transform given by $\E[e^{-\lambda {\mathsf{S}_\alpha}}] = e^{-\lambda^\alpha} (\lambda\geq 0)$.
Then, the law of ${\mathsf{M}_\alpha = \mathsf{S}_{\alpha}^{-\alpha}}$ is referred to as the Mittag-Leffler distribution $(\alpha)$. 
Moreover, for each $\theta>-\alpha$, the generalized Mittag-Leffler distribution $(\alpha, \theta)$, denoted by $\gmit$, is a tilted distribution with its density ${g_{\alpha, \theta}(x)}$ proportional to $x^{\theta/\alpha} g_\alpha(x)$, where $g_\alpha(x)$ is the density of the Mittag-Leffler distribution $(\alpha)$.
\end{definition}
\begin{remark}
The density $g_\alpha(x)$ of the Mittag-Leffler distribution is characterized by the moment
$\int_0^\infty x^p g_\alpha(x) \diff x =\Gamma(p + 1)/\Gamma(p\alpha + 1)$ for all $p>-1$.
Then, it easily follows from the definition $g_{{\alpha, \theta}}(x) \propto x^{\theta/\alpha} g(x)$ that the moment of $\Mit \sim \gmit$ is given by 
\begin{align}
    \forall p>-(1 + \theta/\alpha), \quad   \E[(\Mit)^p] = \frac{\Gamma(\theta + 1)}{\Gamma(\theta/\alpha + 1)} \frac{\Gamma(\theta/\alpha + p + 1)}{\Gamma(\theta + p\alpha + 1)}.\label{eq:gmit_moment}
\end{align}        
\end{remark}
Finally, we introduce the nonstandard asymptotics of the Ewens--Pitman partition when $0<\alpha<1, \theta>-\alpha$. 
\begin{theorem}\label{lm:ep_asym}
We assume $0<\alpha<1, \theta>- \alpha$. Let $S_{n,j}$ be the number of blocks with size $j$, and let $K_n = \sum_{j\geq 1} S_{n,j}$ be the number of nonempty blocks. Then, we have
\begin{itemize}
  \item[(A)] $K_n/n^\alpha \rightarrow \Mit $ a.s. and in the $p$-th moment for all $p>0$, where $\Mit \sim \gmit$ (see \Cref{df:gmit} for the definition of the law $\gmit$).
  \item[(B)] ${S_{n, j}}/{K_n} \rightarrow p_\alpha(j)$ a.s. for all $j \in \mathbb{N}$, where $p_\alpha (j)$ is the density of the Sibuya distribution given by \Cref{df:sibuya}. 
\end{itemize}
\end{theorem}
\begin{proof}[Sketch of proof]
Let $\ep$ denote the law of the Ewens--Pitman partition with parameter $(\alpha,\theta)$. Then, 
(A) can be proved by applying the martingale convergence theorem to the likelihood ratio 
$(\diff \ep/\diff {\mathbb{P}_{\alpha, 0}})|_{\mathcal{F}_n}$ under ${\mathbb{P}_{\alpha, 0}}$, where $\mathcal{F}_n$ is the $\sigma$-field generated by the partition of $n$ balls.
For (B), Kingman's representation theorem implies that the Ewens--Pitman partition can be expressed as the tied observation of 
conditional i.i.d. samples from the Pitman--Yor process (see \Cref{sec:compare} or \cite[p. 440]{ghosal2017fundamentals}). Then, we can analyze $S_{n,j}/K_n$ in the setting of a classical occupancy problem. Readers may refer to \cite[Theorem 3.8]{pitman2006combinatorial} for the detailed proof of (A) and \cite[Lemma 3.11]{pitman2006combinatorial}\cite{gnedin2007notes} for the proof of (B).
\end{proof}

\begin{remark}[{\cite[p. 71]{pitman2006combinatorial}}]\label{rm:abs_cont}
${\mathbb{P}_{\alpha, \theta}}$ are absolutely mutual continuous for each $\theta (> -\alpha)$: the Radon--Nikodym density is given by
$
\diff \ep/\diff {\mathbb{P}_{\alpha, 0}} = ({\mathsf{M}_\alpha})^{\theta/\alpha} {\Gamma(\theta + 1)}/{\Gamma(\theta/\alpha + 1)}$ (${\mathbb{P}_{\alpha, 0}}\text{-a.s.}$), 
where ${\mathsf{M}_{\alpha}}$ is the almost sure limit of $n^{-\alpha}K_n$ under ${\mathbb{P}_{\alpha, 0}}$. This is consistent with \Cref{prop:asym_info} below in the sense that the Fisher information about $\theta$ is bounded as $n$ increases. Roughly speaking,  this result implies that we cannot consistently estimate $\theta$. 
\end{remark}

\subsection{Stable convergence}\label{subsec:stable}
Our main theorem on the asymptotic law of the MLE (\Cref{th:smle_asym}) involves a stable convergence, which is a notion of stochastic convergence stronger than the usual weak convergence. 
In this section, we introduce it in a general format.
Let $(\Omega, \mathcal{F}, P)$ denote a probability space, and let $\mathcal{X}$ be a separable metrizable topological space equipped with its Borel $\sigma$-field $\mathcal{B}(\mathcal{X})$. Furthermore, let $\mathcal{L}^1(\Omega, \mathcal{F}, P) = \mathcal{L}^1$ be the set of $\mathcal{F}$-measurable functions that satisfy $\int |f| dP <+ \infty$, and let $C_b(\mathcal{X})$ be the set of continuous bounded functions on $\mathcal{X}$. With the above notation, the stable convergence is defined as follows:
\begin{definition}
For a sub $\sigma$-field $\mathcal{G} \subset \mathcal{F}$, a sequence of $(\mathcal{X}, \mathcal{B}(\mathcal{X}))$-valued random variables $(X_n)_{n\geq 1}$ is said to converge $\mathcal{G}$-stably to $X$, denoted by $X_n \rightarrow X$ $\mathcal{G}$-stably, iff
\begin{align}\label{eq:stable_convergence_rv}
    \forall f \in \mathcal{L}^1,\quad \forall h \in C_b(\mathcal{X}), \quad \lim_{n\rightarrow\infty} \E[f\E[h(X_n)|\mathcal{G}]]
     = \E[f\E[h(X)|\mathcal{G}]].
\end{align}
 If the limit $X$ is independent of $\mathcal{G}$, $(X_n)_{n\geq 1}$ is said to converge $\mathcal{G}$-mixing, denoted by $X_n \rightarrow X$ $\mathcal{G}$-mixing.
\end{definition}
Note that stable convergence implies the weak convergence, as
the condition \eqref{eq:stable_convergence_rv} with $f=1$ is identical to the definition of weak convergence.
By contrast, if $\mathcal{G}$ is a trivial $\sigma$-field $\{\emptyset, \Omega\}$, then we have $\E[f\E[h(X_n)|\mathcal{G}]] = \int f \diff P \cdot \E[h(X_n)]$ for all $f\in \mathcal{L}^1$. Thus, $X_n \rightarrow X $  {$\mathcal{G}$-stably} coincides with the usual weak convergence $X_n \to^d X$ in the trivial case $\mathcal{G}=\{\emptyset, \Omega\}$. 

The next lemma states that the well-known theorem for weak convergence holds for stable convergence. More precisely, {Slutsky's} lemma holds in a stronger sense. 
\begin{lemma}[{{\cite[p. 34]{hausler2015stable}}}]\label{th:CS_stable}
  For a pair of separable metrizable spaces $(\mathcal{X}, \mathcal{B}(\mathcal{X})), (\mathcal{Y}, \mathcal{B}(\mathcal{Y}))$ with metric $d$, 
let $(X_n)_{n\geq 1}$ be a sequence of $(\mathcal{X}, \mathcal{B}(\mathcal{X}))$-valued random variables, and let $(Y_n)_{n\geq 1}$ be a sequence of $(\mathcal{Y}, \mathcal{B}(\mathcal{Y}))$-valued random variables.
Assuming that a certain random variable $X$ exists s.t. $X_n \rightarrow X$ $\mathcal{G}$-stably, the following statements hold.
\begin{itemize}
    \item[(A)] Let $\mathcal{X} = \mathcal{Y}$. 
    If $d (X_n, Y_n) \rightarrow 0$ in probability, $Y_n \rightarrow X\ \mathcal{G}\text{-stably}.$
    \item[(B)] If {$Y_n \rightarrow Y$} in probability, and $Y$ is {$\mathcal{G}$-measurable}, $(X_n, Y_n) \rightarrow (X, Y) \ \mathcal{G}\text{-stably}$.
    \item[(C)] If $g: \mathcal{X} \rightarrow \mathcal{Y}$ is $(\mathcal{B}(\mathcal{X}), \mathcal{B}(\mathcal{Y}))$-measurable and continuous $P^{X}$-a.s., 
    $g(X_n) \rightarrow g(X) \  \mathcal{G}\text{-stably}.$
\end{itemize}
\end{lemma}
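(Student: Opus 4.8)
The plan is to reduce the three claims to classical weak--convergence theory by means of the following characterization, which I would prove first: $X_n \to X$ $\mathcal{G}$-stably if and only if, for every $A \in \mathcal{G}$ with $P(A)>0$, the conditional law $P(X_n \in \cdot \mid A)$ converges weakly to $P(X \in \cdot \mid A)$. The ``only if'' direction is immediate: taking $f = \mathbbm{1}_A$ with $A \in \mathcal{G}$ in \eqref{eq:stable_convergence_rv} gives $\E[\mathbbm{1}_A\,\E[h(X_n)\mid\mathcal{G}]] = \E[\mathbbm{1}_A h(X_n)] = P(A)\,\E[h(X_n)\mid A]$, and likewise for the limit. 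For ``if'', the tower property gives $\E[f\,\E[h(X_n)\mid\mathcal{G}]] = \E[\E[f\mid\mathcal{G}]\,h(X_n)]$, so it suffices to handle $\mathcal{G}$-measurable $f \in \mathcal{L}^1$; approximating such $f$ in $L^1(P)$ by $\mathcal{G}$-measurable simple functions --- the approximation error being controlled uniformly in $n$ because $\|h\|_\infty < \infty$ --- reduces the claim, by linearity and an $\varepsilon/3$ argument, to the already-treated case $f = \mathbbm{1}_A$.

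Granting this, parts (A) and (C) become transcriptions of classical statements run under each conditional probability $Q_A := P(\cdot \mid A)$ with $P(A)>0$; the only property of $Q_A$ used is that $Q_A \ll P$ with density bounded by $1/P(A)$, so that convergence in $P$-probability passes to $Q_A$-probability and $P$-null sets remain $Q_A$-null. For (A): under $Q_A$ we have $X_n \to X$ weakly and $d(X_n,Y_n)\to 0$ in $Q_A$-probability, so $Y_n \to X$ weakly under $Q_A$ by the classical converging-together lemma (which is itself proved by testing against bounded Lipschitz functions for the equivalent bounded metric $d\wedge 1$, for which $|h(Y_n)-h(X_n)| \le L\,(d\wedge 1)(X_n,Y_n) \to 0$ in $L^1(Q_A)$); as $A \in \mathcal{G}$ was arbitrary, the characterization yields $Y_n \to X$ $\mathcal{G}$-stably. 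For (C): for $A \in \mathcal{G}$ with $P(A)>0$ the law $Q_A(X \in \cdot)$ is absolutely continuous with respect to $P(X\in\cdot)$, hence $g$ is continuous $Q_A(X\in\cdot)$-a.s., and the classical continuous mapping theorem applied to $P(X_n\in\cdot\mid A) \to P(X\in\cdot\mid A)$ gives $P(g(X_n)\in\cdot\mid A) \to P(g(X)\in\cdot\mid A)$; invoking the characterization once more gives $g(X_n) \to g(X)$ $\mathcal{G}$-stably.

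For (B) I would first show $(X_n,Y) \to (X,Y)$ $\mathcal{G}$-stably directly from the definition. For a product test function $(x,y) \mapsto h(x)k(y)$ on $\mathcal{X}\times\mathcal{Y}$ and $f \in \mathcal{L}^1$, pulling the bounded $\mathcal{G}$-measurable factor $k(Y)$ out of the conditional expectation gives $\E[f\,\E[h(X_n)k(Y)\mid\mathcal{G}]] = \E[(fk(Y))\,\E[h(X_n)\mid\mathcal{G}]]$ with $fk(Y) \in \mathcal{L}^1$, so the $\mathcal{G}$-stable convergence of $X_n$ applies; since finite linear combinations of such product functions are convergence-determining on the separable metrizable product $\mathcal{X}\times\mathcal{Y}$, the characterization upgrades this to genuine $\mathcal{G}$-stable convergence of $(X_n,Y)$. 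Finally $d_{\mathcal{X}\times\mathcal{Y}}((X_n,Y_n),(X_n,Y)) = d_{\mathcal{Y}}(Y_n,Y) \to 0$ in probability, so part (A), applied on $\mathcal{X}\times\mathcal{Y}$, delivers $(X_n,Y_n) \to (X,Y)$ $\mathcal{G}$-stably. I expect (B) to be the main obstacle: it is the one place where the content of stable convergence beyond ordinary weak convergence is genuinely used --- through the absorption of $k(Y)$ into the test weight, legitimate precisely because $Y$ is $\mathcal{G}$-measurable --- and it also relies on the standard but not entirely obvious fact that product test functions are convergence-determining for the product topology. Parts (A) and (C) are then routine, the only mild point being that the limit operations survive passage to the conditional measures $Q_A$, which they do because $Q_A$ has bounded density with respect to $P$.
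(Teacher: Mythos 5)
Your proof is correct. Bear in mind that the paper itself offers no proof of \Cref{th:CS_stable}: it is quoted verbatim from \cite[p. 34]{hausler2015stable}, so the comparison is with the textbook argument rather than with anything in the paper. Your reduction of $\mathcal G$-stable convergence to weak convergence of the conditional laws $P(X_n\in\cdot\mid A)$ for $A\in\mathcal G$ with $P(A)>0$ is exactly the standard characterization (the $L^1$-approximation by $\mathcal G$-measurable simple functions, uniform in $n$ thanks to $\|h\|_\infty<\infty$, is the right way to get the ``if'' direction), and your treatments of (A) and (C) are the classical converging-together and continuous-mapping theorems run under each $Q_A=P(\cdot\mid A)$; both are sound, the only silent point in (C) being that the discontinuity set of $g$ is Borel, which it always is for maps between metric spaces. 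The step you rightly single out in (B) -- that products $h\otimes k$ of bounded continuous functions are convergence-determining on $\mathcal X\times\mathcal Y$ for merely separable metrizable spaces -- is true but should not be argued via tightness (which can fail off Polish spaces); a clean route is to embed $\mathcal X$ and $\mathcal Y$ in metrizable compactifications, apply Stone--Weierstrass there, and pull weak convergence back with the portmanteau theorem. H\"ausler--Luschgy avoid this fact altogether: they approximate the $\mathcal G$-measurable $Y$ by countably valued $\mathcal G$-measurable random variables and test directly against uniformly continuous $H\in C_b(\mathcal X\times\mathcal Y)$, which is the main difference in route; both arguments exploit the $\mathcal G$-measurability of $Y$ in the same way, by absorbing the $Y$-dependence into the $\mathcal L^1$ weight, and either one closes the argument. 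Your final reduction of (B) to (A) on the product space is fine.
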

If $\mathcal{G}$ is a tribal $\sigma$-field $\{\emptyset, \Omega\}$, the above assertions are the well-known results for weak convergence. Importantly, (B) allows $Y$ to be any $\mathcal{G}$-measurable random variable and not just a constant. In this sense, Slutsky's lemma holds strongly for stable convergence. 
In our theorem, we will set $\mathcal{G}$ as the limit of the sigma fields generated by the sequential partition generated by the Ewens--Pitman partition. 

\section{Main result}\label{sec:main}
\subsection{Fisher Information}\label{subsec:fisher}
{In the following}  we will assume $0<\alpha<1, \theta >-\alpha$.
Before introducing our main theorem, let us discuss the asymptotic analysis of Fisher information to acquire insights into the parameter estimation of the Ewens--Pitman partition.

Let $I_\alpha$ be the Fisher information of the Sibuya distribution
\begin{align}\label{eq:sibuya_fisher}
I_\alpha :=  -\sum_{j=1}^\infty p_\alpha(j)\cdot  \partial_\alpha^2 \log p_\alpha(j) \quad \text{ with } \quad  p_\alpha(j) = \frac{\alpha \prod_{i=1}^{j-1}(i-\alpha)}{j!},
\end{align}
for all $\alpha\in(0,1)$.
The next proposition provides two formulas for $I_\alpha$.
\begin{proposition}\label{prop:sibuya_fisher}
$I_\alpha$ is continuous in $\alpha\in (0,1)$ and can be written by
\begin{align*}
    I_\alpha \overset{(A)}{=} \frac{1}{\alpha^2}+ \sum_{j=1}^\infty p_\alpha(j) \sum_{i=1}^{j-1} \frac{1}{(i-\alpha)^{2}}
    \overset{(B)}{=} \frac{1}{\alpha^2} + \sum_{j=1}^\infty \frac{p_\alpha(j)}{\alpha(j-\alpha)} > 0.
\end{align*}
\end{proposition}
We will use the two formulas in the proof of our main results. In particular, (A) will appear in the limit of the second derivative of the log-likelihood of the Ewens--Pitman partition, while (B) will be used in the limit of the variance of the first derivative of the log-likelihood. Furthermore, we will see later that our proposed confidence interval of $\alpha$ (see \Cref{cor:mixing_convergence}) requires the computation of $I_\alpha$. In this situation, we recommend (B) in terms of numerical errors: $p_\alpha(j) = O(j^{-\alpha-1})$ by Stirling's formula implies that the numerical error caused by truncating the infinite series of (A) at $n$ is 
 $\sum_{j=n}^\infty p_\alpha(j) \sum_{i=1}^{j-1} (i-\alpha)^{-2} = O(n^{-\alpha})$,  while $\sum_{j=n}^\infty p_\alpha(j) ({\alpha(j-\alpha)})^{-1} = O(n^{-\alpha - 1})$ for (B), where the error in (B) decays faster than that in (A).
We plot $I_\alpha$ in \Cref{fig:I_graph}
using the formula (B) with $j$ truncated at $10^5$ for each $\alpha$. {\Cref{fig:I_graph} suggests that $\alpha\mapsto I_\alpha$ is log-convex.} 
 \begin{figure}[htbp]
     \centering
     \includegraphics[width=0.45\linewidth]{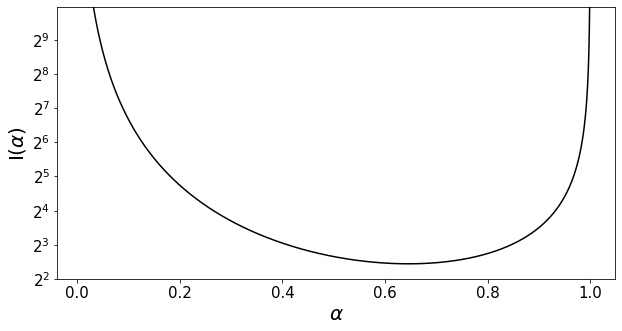}
     \caption{Plot of the Fisher Information $I_\alpha$ in \eqref{eq:sibuya_fisher}}
     \label{fig:I_graph}
 \end{figure} 
 
For the asymptotic analysis of the Fisher information of the Ewens--Pitman partition ahead, we define the function $f_\alpha: (-1,\infty) \rightarrow \mathbb{R}$ for each $\alpha\in (0,1)$ by 
\begin{align}\label{eq:df_f_alpha}
  \forall z\in (-1,\infty), \quad  f_\alpha(z) :=  \psi(1 + z) - \alpha \psi( 1 + \alpha z),
\end{align}
where $\psi(x) = \Gamma'(x)/\Gamma(x)$ is the digamma function. The next lemma claims some basic properties of $f_\alpha$.
\begin{lemma}\label{lm:f_alpha}
The map $f_\alpha: (-1,\infty) \rightarrow \mathbb{R}$ defined by \eqref{eq:df_f_alpha} is bijective and satisfies $f'_\alpha(z) > 0$ and
$f''_\alpha(z) < 0$ for all $z\in(-1,\infty)$.
\end{lemma}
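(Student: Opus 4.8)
The plan is to establish the three asserted properties of $f_\alpha(z) = \psi(1+z) - \alpha\psi(1+\alpha z)$ by working directly with the known series representation of the polygamma functions. Recall that $\psi'(x) = \sum_{k=0}^\infty (k+x)^{-2}$ and $\psi''(x) = -2\sum_{k=0}^\infty (k+x)^{-3}$, both convergent for $x > 0$. Since $1+z > 0$ and $1+\alpha z > 0$ for all $z \in (-1,\infty)$ and $\alpha \in (0,1)$, these series are valid throughout the domain, and I would differentiate $f_\alpha$ termwise.

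First I would compute $f_\alpha'(z) = \psi'(1+z) - \alpha^2\psi'(1+\alpha z)$. Using the series, this is $\sum_{k=0}^\infty \left[ (k+1+z)^{-2} - \alpha^2(k+1+\alpha z)^{-2}\right]$. To see positivity, I would compare term by term: it suffices to show $(k+1+z)^{-1} \ge \alpha(k+1+\alpha z)^{-1}$, i.e. $\alpha(k+1+z) \le k+1+\alpha z$, i.e. $\alpha(k+1) \le k+1$, which holds since $\alpha < 1$ (with the cross-term $\alpha z$ canceling exactly). Squaring preserves the inequality since both sides are positive (note $k+1+\alpha z > 0$), and strictness already holds at $k=0$ via $\alpha \cdot 1 < 1$, so $f_\alpha'(z) > 0$. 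For concavity, $f_\alpha''(z) = \psi''(1+z) - \alpha^3 \psi''(1+\alpha z) = -2\sum_{k=0}^\infty\left[(k+1+z)^{-3} - \alpha^3(k+1+\alpha z)^{-3}\right]$; the same comparison $\alpha(k+1+z) \le k+1+\alpha z$ cubed shows each bracket is nonnegative (strictly so at $k=0$), hence $f_\alpha''(z) < 0$.

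For bijectivity, strict monotonicity from $f_\alpha' > 0$ gives injectivity, so it remains to show $f_\alpha$ is surjective onto $\mathbb{R}$, for which I would check the limits at the endpoints of $(-1,\infty)$. As $z \downarrow -1$, $\psi(1+z) \to \psi(0^+) = -\infty$ while $\alpha\psi(1+\alpha z) \to \alpha\psi(1-\alpha)$ stays finite (since $1-\alpha \in (0,1)$), so $f_\alpha(z) \to -\infty$. As $z \to \infty$, I would use the asymptotic $\psi(x) = \log x + O(1/x)$, giving $f_\alpha(z) = \log(1+z) - \alpha\log(1+\alpha z) + O(1/z) = (1-\alpha)\log z + O(1)$, which tends to $+\infty$ since $1-\alpha > 0$. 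Continuity of $f_\alpha$ (inherited from continuity of $\psi$ on $(0,\infty)$) plus the intermediate value theorem then yields surjectivity, completing the proof.

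**The main obstacle** is likely to be only a matter of care rather than depth: one must be vigilant that all arguments of $\psi, \psi', \psi''$ stay strictly positive (which they do, precisely because the domain is $z > -1$ and $0 < \alpha < 1$ force both $1+z > 0$ and $1+\alpha z > 1-\alpha > 0$), and one must handle the termwise comparison cleanly — the key algebraic identity $\alpha(k+1+z) = \alpha(k+1) + \alpha z \le (k+1) + \alpha z$ is what makes every term line up with the correct sign. No genuinely hard estimate is needed; the argument is essentially a robust term-by-term comparison of two convergent series plus an endpoint limit computation.
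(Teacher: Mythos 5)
Your proposal is correct and follows essentially the same route as the paper: termwise series representations of $\psi'$ and $\psi''$, a term-by-term comparison that reduces to $\alpha(k+1)\le k+1$ (the paper phrases the same comparison by rewriting $\alpha^2(i+\alpha z)^{-2}$ as $(i/\alpha+z)^{-2}$), and the endpoint limits $f_\alpha(z)\to-\infty$ as $z\downarrow -1$ and $f_\alpha(z)=(1-\alpha)\log z+O(1)\to+\infty$ as $z\to\infty$ to conclude bijectivity. No gaps.
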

Here, it is important to emphasize that $f_\alpha$ is bijective. We will see later that $f_\alpha$ also appears in the asymptotics of the MLE for $\theta$ through its inverse function $f_\alpha^{-1}$. 
 
Finally, we discuss the Fisher information of the Ewens--Pitman partition. 
We denote the logarithm of the likelihood \eqref{eq:ep_density} by $\ell_n(\alpha, \theta)$, and we define $I_{{\alpha,\alpha}}^{(n)}$, $I_{{\alpha, \theta}}^{(n)}$, and 
$I_{{\theta,\theta}}^{(n)}$ by
\begin{align}\label{eq:df_ep_info}
\begin{split}
        I_{{\alpha,\alpha}}^{(n)} &:= \E[(\partial_\alpha \ell_n(\alpha, \theta))^2], \\  I_{{\alpha, \theta}}^{(n)} &:= \E[\partial_{\alpha} \ell_n(\alpha, \theta)\cdot \partial_{\theta} \ell_n(\alpha, \theta)], \\
        I_{{\theta,\theta}}^{(n)} &:= \E[(\partial_\theta \ell_n(\alpha, \theta))^2],
\end{split}
\end{align}
i.e.,  they are the Fisher information obtained after $n$ balls are partitioned according to the Ewens--Pitman partition $(\alpha,\theta)$. The next proposition derives the leading terms as $n\to \infty$. 
\begin{proposition}\label{prop:asym_info}
Let $I_\alpha$ be the Fisher information of the Sibuya distribution \eqref{eq:sibuya_fisher}, and let $f'_\alpha$ be the derivative of $f_\alpha$ defined by \eqref{eq:df_f_alpha}. Then, the
leading terms of the Fisher information are given by
\begin{align*}
    I_{{\alpha,\alpha}}^{(n)} \sim {n^\alpha} \E[\Mit] I_\alpha, \ \ 
    I_{{\alpha, \theta}}^{(n)} \sim  \alpha^{-1} \log n, \ \ 
    I_{{\theta,\theta}}^{(n)} \rightarrow \alpha^{-2} f'_\alpha(\theta/\alpha) <+\infty,
\end{align*}
where $\E[\Mit]$ is the moment of $\gmit$ given by \eqref{eq:gmit_moment}. 
\end{proposition}
We observe that the Fisher information of $\theta$ is finite, which means that  $\theta$ can not be consistently estimated no matter how large $n$ is. This agrees with the absolute mutual continuity given by \Cref{rm:abs_cont}. On the other hand, the optimal convergence rate of estimators for $\alpha$ is at most $n^{-\alpha/2}$, which is slower than the typical rate $n^{-1/2}$ in typical i.i.d. cases. Note in passing that the cross term of the Fisher information matrix $I_{{\alpha, \theta}}^{(n)}$ is negligible compared to $I_{{\alpha,\alpha}}^{(n)}$, which implies that 
$\alpha$ and $\theta$ are asymptotically orthogonal (see \Cref{fig:vis_mle_pmle}). This supports the well-known fact that the inference of $\theta$ has less effect on $\alpha$ as $n$ increases (see \cite{balocchi2022bayesian, franssen2022empirical}). 

\subsection{Maximum Likelihood Estimator}\label{subsec:mle}
In this section, we derive the exact asymptotic distribution of the maximum likelihood estimator. Recall that $\ell_n(\alpha,\theta)$ is the logarithm of the likelihood \eqref{eq:ep_density}, which can be written as 
\begin{align}
  &\ell_n(\alpha, \theta) 
    = \sum_{i=1}^{K_n-1}\log (\theta + i\alpha) - \sum_{i=1}^{n-1}\log (\theta+i) + \sum_{j=2}^n S_{n,j} \sum_{i=1}^{j-1}\log (i-\alpha).\label{eq:df_loglikelihood}
\end{align}
Then, the MLE $(\hat{\alpha}_n, \hat{\theta}_n)$ is defined as the maxima of $\ell_n$. 
\begin{definition}\label{df:smle}
Define the maximum likelihood estimator $(\hat{\alpha}_n,\hat{\theta}_n)$ by
\begin{align*}
    (\hat{\alpha}_n,\hat{\theta}_n) &\in \argmax_{\alpha\in {(0,1)}, \theta>-\alpha} \ell_n(\alpha, \theta). 
\end{align*}
\end{definition}
As the parameter space $\{(\alpha, \theta): \alpha \in {(0,1)}, \theta>-\alpha\}$ is not compact, the existence and uniqueness of the MLE are not obvious. The next theorem verifies the existence and uniqueness
rigorously. 
{
\begin{proposition}\label{prop:smle_unique}
With probability $1-o(1)$, the MLE uniquely exists.
\end{proposition}
}
Now, let us define the $\sigma$-field $\mathcal{F}_\infty := \sigma (\cup_{n=1}^\infty \mathcal{F}_n)$ where $\mathcal{F}_n$ is the $\sigma$-field generated by the partition of $n$ balls following the Ewens--Pitman partition (see \Cref{subsec:asym_ep}). 
Then, the exact asymptotic distribution of the MLE $(\hat{\alpha}_n, \hat{\theta}_n)$ is characterized as follows:
\begin{theorem}\label{th:smle_asym}
Let $I_\alpha$ be the Fisher information of the Sibuya distribution defined by \eqref{eq:sibuya_fisher}, and let $f_\alpha^{-1}$ be the inverse of the bijective function $f_\alpha$ defined by \eqref{eq:df_f_alpha}. Then, the asymptotics of $(\hat{\alpha}_n, \hat{\theta}_n)$ is given by
\begin{align}
  n^{\alpha/2}(\hat{\alpha}_n - \alpha) &\to (I_\alpha \Mit )^{-1/2} \cdot \N  &&(\stable), \label{eq:asymptotic_law_alpha} \\
  \hat{\theta}_n &\to \alpha \cdot f_\alpha^{-1}(\log \Mit) && (\text{in probability}), \label{eq:asymptotic_law_thtea}
\end{align}
where  $\N\sim \normal(0,1)$ is independent of $\mathcal{F}_\infty$, and 
$\Mit = \lim_{n\to \infty} n^{-\alpha} K_n$ is a nondegenerate positive random variable following $\gmit$ (see \Cref{df:gmit}). 
\end{theorem}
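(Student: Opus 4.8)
The plan is to carry out a standard-looking Taylor expansion of the score equations, but with the non-standard normalizations dictated by Proposition~\ref{prop:asym_info}, and then to upgrade the resulting weak convergence to stable convergence. Write $\theta_0 := \alpha f_\alpha^{-1}(\log \Mit)$ for the candidate limit of $\hat\theta_n$; note $\theta_0$ is $\mathcal{F}_\infty$-measurable. The key structural fact, from \eqref{eq:intro_asym_info}/\Cref{prop:asym_info}, is that the Fisher information matrix has the scaling $\mathrm{diag}(n^{\alpha/2}, 1)$ on the left and right: the $\alpha\alpha$-entry grows like $n^\alpha$, the $\theta\theta$-entry is $O(1)$, and the cross term is only $O(\log n) = o(n^{\alpha/2})$. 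So after rescaling $u := n^{\alpha/2}(x-\alpha)$ and $v := y - \theta_0$, the normalized log-likelihood $\ell_n(\alpha + n^{-\alpha/2}u,\ \theta_0 + v) - \ell_n(\alpha,\theta_0)$ should converge, locally uniformly in $(u,v)$, to a random quadratic-plus-lower-order limit that decouples: a genuine quadratic $-\tfrac12 I_\alpha \Mit\, u^2 + (\text{Gaussian})\,u$ in the $u$-direction, and in the $v$-direction a strictly concave deterministic-given-$\mathcal{F}_\infty$ function maximized exactly at $v=0$ (this is where $f_\alpha$ and its bijectivity enter). The first displayed limit \eqref{eq:asymptotic_law_alpha} then falls out of the quadratic, and \eqref{eq:asymptotic_law_thtea} from uniqueness of the maximizer of the $v$-limit.

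Concretely, the steps I would carry out: (1) Use \Cref{prop:smle_unique} so that it suffices to analyze the unique stationary point. (2) Establish the ``LAMN-type'' expansion: show $\partial_\alpha \ell_n(\alpha,\theta_0)/n^{\alpha/2} \to (I_\alpha \Mit)^{1/2}\, N$ $\mathcal{F}_\infty$-stably with $N$ standard normal independent of $\mathcal{F}_\infty$; show $-\partial_\alpha^2 \ell_n(\alpha,\theta_0)/n^\alpha \to I_\alpha \Mit$ in probability; show the mixed second derivative is $O_p(\log n)$ and the $\theta$-block behaves like $\partial_\theta \ell_n \to (\text{something})$, $-\partial_\theta^2\ell_n \to \alpha^{-2} f_\alpha'(\theta_0/\alpha)$, etc. All of these reduce, via the explicit form \eqref{eq:df_loglikelihood}, to sums of the type $\sum_j S_{n,j}\, g(j)$ and $\sum_{i=1}^{K_n-1} h(i\alpha+\theta)$, which are handled by the empirical-measure convergence machinery promised in \Cref{sec:proof} together with \Cref{lm:ep_asym}(A)--(B): $S_{n,j}/K_n \to p_\alpha(j)$, $K_n/n^\alpha \to \Mit$. (3) Feed the score expansion into the stationarity equations: Taylor-expand $0 = \partial_x \ell_n(\hat\alpha_n,\hat\theta_n)$ and $0 = \partial_y\ell_n(\hat\alpha_n,\hat\theta_n)$ around $(\alpha,\theta_0)$, use consistency of $\hat\alpha_n$ (already known, $O_p(n^{-\alpha/2}\log n)$ from \cite{favaro2021near}) plus a preliminary consistency $\hat\theta_n \to^p \theta_0$ obtained from the concavity of the $v$-limit, and invert the $2\times2$ system — the off-diagonal $O_p(\log n)$ terms wash out after the $n^{\alpha/2}$ rescaling, giving the decoupled limits. (4) Finally invoke \Cref{th:CS_stable}(B)--(C) to combine $n^{\alpha/2}(\hat\alpha_n-\alpha)\to (I_\alpha\Mit)^{-1/2}N$ stably with $\Mit$ (which is $\mathcal{F}_\infty$-measurable, hence stably convergent as a constant) and to push $\log\Mit$ through $\alpha f_\alpha^{-1}(\cdot)$ for the $\hat\theta_n$ statement.

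I expect the main obstacle to be the stable convergence of the normalized $\alpha$-score, $n^{-\alpha/2}\partial_\alpha\ell_n(\alpha,\theta_0) \to (I_\alpha\Mit)^{1/2}N$ $\mathcal{F}_\infty$-stably with $N \perp \mathcal{F}_\infty$. One cannot simply cite a martingale CLT because $\partial_\alpha\ell_n$ is a sum indexed by block-sizes weighted by the $S_{n,j}$, not a martingale in $n$ in any obvious way, and the ``sample size'' $K_n$ is itself random with a non-degenerate limit $\Mit$ — precisely the non-ergodic feature that forces mixed normality. The right route is Kingman's representation (as in the proof sketch of \Cref{lm:ep_asym}(B)): realize the partition as ties among conditionally i.i.d. draws from a Pitman--Yor process, so that, conditionally on the random ranked frequencies $(P_i)$ (which generate $\mathcal{F}_\infty$ and determine $\Mit$), the score becomes a sum of conditionally independent mean-zero terms over the $K_n$ occupied blocks; then a conditional Lindeberg--Feller argument yields a conditionally Gaussian limit with random variance $I_\alpha\Mit$, which is exactly $\mathcal{F}_\infty$-stable convergence to $(I_\alpha\Mit)^{1/2}N$. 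Verifying the conditional Lindeberg condition and identifying the conditional variance as $K_n I_\alpha / n^\alpha \to I_\alpha\Mit$ — using formula (B) of \Cref{prop:sibuya_fisher} for the per-block variance — is the technical heart; the rest is bookkeeping with \Cref{lm:ep_asym} and \Cref{th:CS_stable}.
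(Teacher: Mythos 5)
There is a genuine gap, and it sits exactly where you locate the ``technical heart.'' Your claim that one cannot use a martingale CLT because $\partial_\alpha\ell_n$ is ``not a martingale in $n$ in any obvious way'' is wrong: under the sequential (Chinese-restaurant) construction, the score at the true parameter \emph{is} an $\mathcal{F}_n$-martingale (the general fact that score processes of sequentially observed data are martingales), its increments are explicitly computable and bounded (either $K_m/(\theta+K_m\alpha)$ or $-(l-\alpha)^{-1}$ depending on where the $(m+1)$-th ball lands), and the paper's proof is precisely an application of the stable martingale CLT of H\"ausler--Luschgy, verifying the conditional variance condition $n^{-\alpha}\sum_m \E[X_m^2\mid\mathcal{F}_{m-1}]\to^p I_\alpha\Mit$ via the empirical-measure convergence and formula (B) of \Cref{prop:sibuya_fisher}. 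The alternative route you propose instead --- Kingman/Pitman--Yor representation plus a conditional Lindeberg--Feller argument given the frequencies --- is only asserted, and it is far from routine: given $P$ the block counts $S_{n,j}$ are \emph{not} sums of independent contributions over occupied blocks (they are dependent occupancy counts constrained by $\sum_j jS_{n,j}=n$), the score, which is defined through the Ewens--Pitman likelihood, is not obviously conditionally centered given $P$, and fluctuations of its conditional mean could enter at the $n^{\alpha/2}$ scale; this is essentially the obstacle that forces the extra assumptions on $r(x)$ in \cite{favaro2021near,balocchi2022bayesian,franssen2022empirical}, assumptions not known to hold for the Pitman--Yor process. So the one step you flag as hard is left unproved, and the easy route you dismiss is the one that actually works.

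Beyond that, your step (3) is also not adequate as stated. You Taylor-expand both score equations jointly around the random center $(\alpha,\theta_0)$ with $\theta_0=\alpha f_\alpha^{-1}(\log\Mit)$, but $\hat\theta_n\to^p\theta_0$ comes with no rate, the parameter space in $y$ is unbounded, and $\partial_\theta\ell_n(\alpha,\theta_0)$ is not a score at the true parameter, so controlling the remainder and arguing that the $O_p(\log n)$ cross terms ``wash out'' requires uniform control of the Hessian over a region in the $\theta$-direction that you have not established. The paper avoids the two-dimensional expansion altogether: it profiles out $\theta$ via $\hat y_n(x)=\argmax_y\ell_n(x,y)$, reduces to a one-dimensional equation for $\hat\Psi_n(x)=K_n^{-1}\tfrac{\diff}{\diff x}\ell_n(x,\hat y_n(x))$, and devotes substantial work (Lemmas~\ref{lm:z_n} and \ref{lm:hat_Psi_asym}) to bounding $\hat z_n(x)=\hat y_n(x)/x$, which can blow up polynomially for $x<\alpha$ --- this is the control your sketch silently assumes. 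Finally, you cannot import the rate $\hat\alpha_n=\alpha+O_p(n^{-\alpha/2}\log n)$ from \cite{favaro2021near} as a ``known'' input: the paper stresses that their hypotheses are not verified for the Ewens--Pitman/Pitman--Yor setting, so consistency must be (and in the paper is) proved from scratch.
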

Noting that the leading term of the Fisher information about $\alpha$, denoted by $I_{{\alpha,\alpha}}^{(n)}$, is given by ${n^\alpha} \E[\Mit] I_\alpha$ (see \Cref{prop:asym_info}), \eqref{eq:asymptotic_law_alpha} gives 
\begin{align}\label{eq:mixed_normal}
    \sqrt{I_{{\alpha,\alpha}}^{(n)}} (\hat{\alpha}_{n} - \alpha) = \sqrt{I_{{\alpha,\alpha}}^{(n)}/n^\alpha} \cdot n^{\alpha/2} (\hat{\alpha}_{n} - \alpha) \to \sqrt{\E[\Mit]/\Mit}\cdot \N.
\end{align}
As $\Mit$ is a non-degenerate random variable, \eqref{eq:mixed_normal} implies that the error of the MLE normalized by the Fisher information does not converge to the standard normal but a variance mixture of centered normals. 
This type of asymptotics is referred to as \textit{asymptotic mixed normality}, which is often observed in ``nonergodic'' or ``explosive'' stochastic processes (c.f. \cite{hausler2015stable}).

By contrast, {Slutsky's} lemma for stable convergence (more precisely, (B) of \Cref{th:CS_stable} with $X_n = \sqrt{n^{\alpha}I_\alpha}(\hat{\alpha}_{n} - \alpha)$ and $Y_n = \sqrt{K_n/n^\alpha}$) results in 
\begin{align*}
  \sqrt{K_n I_\alpha } (\hat{\alpha}_{n} - \alpha) = \sqrt{K_n/n^\alpha} \cdot \sqrt{n^\alpha I_\alpha} (\hat{\alpha}_{n} - \alpha)
  \to \sqrt{\Mit} \cdot \N/\sqrt{\Mit} = \N.
\end{align*}
We observe that the randomness of the variance is now canceled out, and the limit is the standard normal. Here, the number of blocks $K_n$ corresponds to the sample size in typical i.i.d. cases and $I_\alpha$ plays the role of the Fisher information per block. 
Furthermore, it immediately follows that $[\hat{\alpha}_n \pm {1.96}/{\sqrt{I_{\hat{\alpha}_n} K_n}}]$ is {an approximate} $95\%$ confidence interval for $\alpha$. We reiterate these observations as a corollary below.

\begin{corollary}\label{cor:mixing_convergence} Let $I_\alpha$ be the Fisher information of the Sibuya distribution defined by \eqref{eq:sibuya_fisher} and
let $K_n$ be the number of blocks generated after $n$ balls are partitioned. Then, 
in the same setting as \Cref{th:smle_asym},
the following mixing convergence holds:
\begin{align}\label{eq:mixing}
  \sqrt{I_\alpha K_n} \cdot (\hat{\alpha}_{n} - \alpha) \rightarrow \N \quad (\mathcal{F}_\infty\text{-mixing}).
\end{align}
Therefore, 
{for any $p\in (0,1)$, letting $ \tau_{1-p/2}$ be the $(1-p/2)$-quantile of standard normal, 
the interval $\hat{I}_n := [\hat{\alpha}_n \pm \tau_{1-p/2}/{\sqrt{I_{\hat{\alpha}_n} K_n}}]$ is an approximate $100(1-p)\%$ confidence interval in the sense of 
$\lim_{n\to+\infty} \Pr\bigl(\alpha\in \hat{I}_n \bigr) = 1-p$. 
}
\end{corollary}

Finally, we discuss the limit law $\alpha f_\alpha^{-1}(\log \Mit)$ of $\hat{\theta}_n$ in \Cref{th:smle_asym}. We claim that the limit distribution of the MLE $\hat{\theta}_n$ of $\theta$ is positively biased. The key observation is the strict convexity of $f_\alpha^{-1}$. Indeed, $f_\alpha$ is strictly concave and strictly increasing (see \Cref{lm:f_alpha}), so $f_\alpha^{-1}$ is strictly convex. Since $\Mit\sim \gmit$ is not constant, Jensen's inequality gives the strict inequality:
  $$
  \E[\alpha f_\alpha^{-1}(\log \Mit)]  > \alpha f_\alpha^{-1} (\E[\log \Mit])
  $$
  {where $\E$ is the expectation with respect to $\Mit\sim \gmit$. Since $\epsilon \mapsto (c^\epsilon-1)/\epsilon$ is increasing for any $c>0$, the monotone convergence theorem yields 
  \begin{align}\label{eq:replica}
      \E[\log \Mit] =\E[\lim_{\epsilon\rightarrow 0} \epsilon^{-1} ((\Mit)^\epsilon - 1)] = \lim_{\epsilon\rightarrow 0}
      \E[\epsilon^{-1}((\Mit)^\epsilon-1)]
  \end{align}
  }
By the moment formula of $\Mit\sim \gmit$ in \eqref{eq:gmit_moment}, we have that  
  \begin{align*} 
      \lim_{\epsilon\rightarrow 0}
      \E[\epsilon^{-1}((\Mit)^\epsilon-1)] &= \partial_\epsilon|_{\epsilon=0} \E[(\Mit)^\epsilon]\\
      &= \partial_\epsilon|_{\epsilon=0} \left(\frac{\Gamma(\theta + 1)}{\Gamma(\theta/\alpha + 1)} \frac{\Gamma(\theta/\alpha + \epsilon + 1)}{\Gamma(\theta + \epsilon\alpha + 1)}\right) && \text{by \eqref{eq:gmit_moment}} \\
      &= f_\alpha(\theta/\alpha) && \text{by \eqref{eq:df_f_alpha}}.
  \end{align*}
  Combining the above display together, we are left with 
  \begin{align*}
      \E[\alpha f_\alpha^{-1}(\log \Mit)] &>\alpha f_\alpha^{-1} (\E[\log \Mit])
      \\
      &= \alpha f_\alpha^{-1} (\lim_{\epsilon\rightarrow 0}
      \E[\epsilon^{-1}((\Mit)^\epsilon-1)])\\
      &= 
      \alpha f_\alpha^{-1} (f_\alpha(\theta/\alpha))\\
      &= \theta.
  \end{align*}
  We present the result obtained above in the following proposition. 
{ 
\begin{proposition}\label{prop:biased}
  The limit distribution of $\hat{\theta}_n$ is biased, i.e., $\E[\alpha \cdot f_{\alpha}^{-1}(\log\Mit)] >  \theta$. 
\end{proposition}
}
In \Cref{sec:Numeric}, we will plot the histogram $\alpha f_{\alpha}^{-1}(\log \Mit)$ by sampling $\Mit \sim \gmit$ and confirm the positive bias $\E[\alpha f_\alpha^{-1}(\log \Mit)] > \theta$. 

\subsection{Quasi Maximum Likelihood Estimator}\label{subsec:qmle}
In the previous section, we considered the simultaneous estimation of $\alpha$ and $\theta$. However, as we mentioned in \Cref{sec:intro}, the estimation of $\alpha$ is of more interest than that of $\theta$, and $\theta$ is sometimes regarded as a nuisance parameter in practice. 
In this section, we consider the MLE of $\alpha$ with $\theta$ being miss-specified. 
Considering the asymptotic orthogonality of  $(\alpha, \theta)$ (recall \Cref{prop:asym_info}), the MLE of $\alpha$ with $\theta$ miss-specified is expected to have the same asymptotic law as the MLE with $\theta$ jointly estimated. In this section, we make these arguments more rigorous:  we claim that they are identical up to the order of $n^{-\alpha/2}$, but they differ in higher order (see \Cref{fig:vis_mle_pmle}). Furthermore, we demonstrate that the MLE with $\theta$ jointly estimated is adaptive to the scale of the nuisance $\theta$. 

\begin{figure}[htbp]
    \centering
    \includegraphics[width=0.80\linewidth]{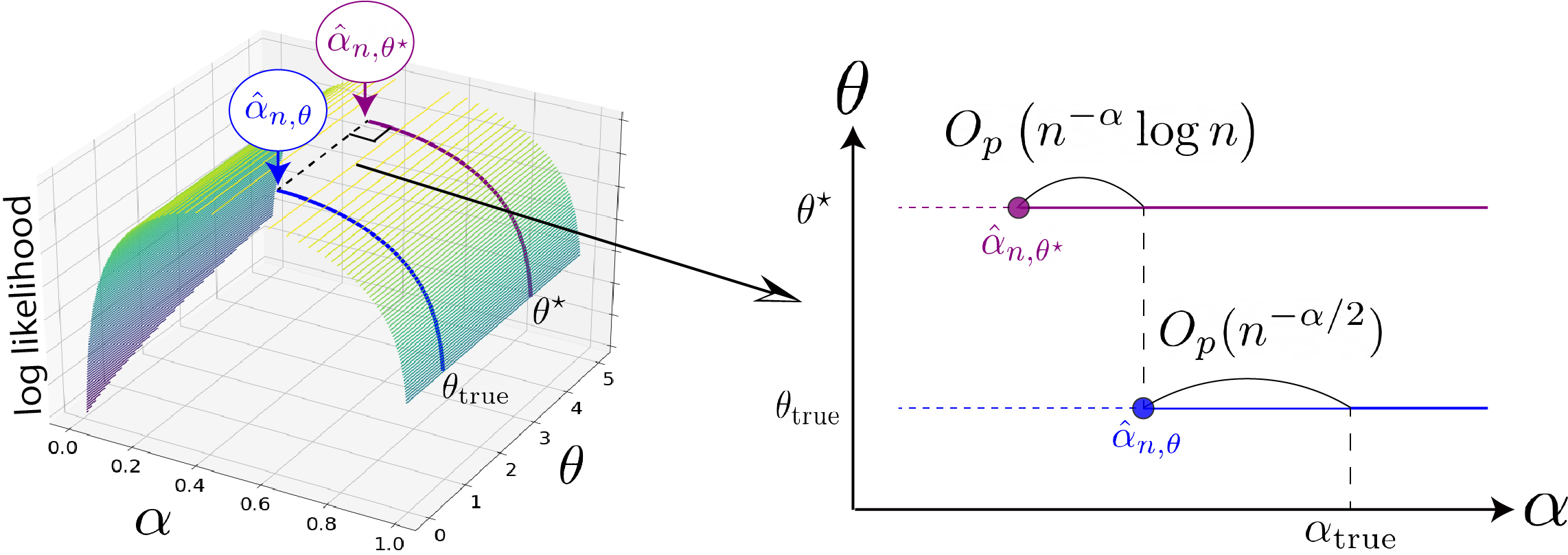}
    \caption{Asymptotic orthogonality of $\alpha$ and $\theta$.}
    \label{fig:vis_mle_pmle}
\end{figure}

First, we define the quasi maximum likelihood estimator (QMLE) as the MLE of $\alpha$ with $\theta$ being miss-specified. 
\begin{definition}[QMLE]\label{df:qmle}
For each $\plug\in(-\alpha,\infty)$,  we define the QMLE with plug-in $\plug$, denoted by $\hat{\alpha}_{n,\plug}$, as follows
\begin{align}
  \hat{\alpha}_{n,\plug}
  &\in \underset{\alpha \in ((-\plug) \vee 0 , 1)}{\argmax}
   \ \ell_n(\alpha, \plug), \label{eq:pmle_df}
\end{align}
where $\ell_n$ is the function defined by \eqref{eq:df_loglikelihood}.
\end{definition}
{Note that the true parameter $(\alpha, \theta)$ must satisfy $\alpha\in (0,1)$ and $\theta+\alpha>0$, so for each $\theta>-1$, $\alpha$ belongs to the subset $((-\theta)\vee 0, 1)$ of $ (0,1)$. Thus, the definition of the QMLE; i.e.,  $\argmax_{\alpha\in (-(\plug)\vee 0, 1)}$, is natural, and it contains the true $\alpha$ as long as we set $\plug\ge 0$. }

We emphasize that the QMLE $\hat{\alpha}_{n, \theta} = \hat{\alpha}_{n,\plug=\theta}$ with $\plug$ being the true $\theta$ is just the MLE of $\alpha$ with $\theta$ known. From now on, we regard this as an oracle estimator of $\alpha$, and we will compare the QMLE $\hat{\alpha}_{n, \plug}$ (with $\plug \neq \theta$) and the MLE $\hat{\alpha}_n$ in \Cref{df:smle} (where $\theta$ is jointly estimated) based on their error to the oracle $\hat{\alpha}_{n, \theta}$. 

The following propositions claim that $\hat{\alpha}_{n,\plug}$ uniquely exists and has the same asymptotic distribution as $\hat{\alpha}_n$. 
\begin{proposition}\label{prop:qmle_unique}
{For any $\plug>-\alpha$, with probability $1-o(1)$, the QMLE $\hat{\alpha}_{n,\plug}$ uniquely exists.} 
\end{proposition}
\begin{proposition}\label{prop:qmle_stable}
\eqref{eq:asymptotic_law_alpha} of \Cref{th:smle_asym} holds for QMLE $\hat{\alpha}_{n, \plug}$, i.e., 
\begin{align*}
      n^{\alpha/2}(\hat{\alpha}_{n, \plug} - \alpha) &\to (I_\alpha \Mit )^{-1/2} \cdot \N \ \  (\stable).
\end{align*}
\end{proposition}\Cref{prop:qmle_stable} implies that the QMLE $\hat{\alpha}_{n, \plug}$ and $\hat{\alpha}_{n}$ are asymptotically equivalent on the scale of $n^{-\alpha/2}$. With this, it appears that jointly estimating $\alpha$ and $\theta$ is of no use. However, the next proposition implies that they differ on the order of $n^{-\alpha} \log n$ and $\hat{\alpha}_n$ is close to the oracle $\hat{\alpha}_{n, \theta}$ regardless of the scale of $\theta$.
\begin{proposition}\label{prop:error}
 For the QMLE $\hat{\alpha}_{n,\plug}$ and the MLE $\hat{\alpha}_n$, 
their asymptotic errors to the oracle $\hat{\alpha}_{n,\plug=\theta}$ are given by
\begin{align}
    \frac{n^\alpha}{\log n}(\hat{\alpha}_{n,\plug} - \hat{\alpha}_{n, \theta}) &\to^p -\frac{\plug- \theta}{\alpha I_\alpha \Mit} \qquad  \text{for all $\plug \in (-\alpha, \infty)$}, \label{eq:qmle_mle_error}  \\
    \frac{n^\alpha}{\log n}(\hat{\alpha}_n - \hat{\alpha}_{n,\theta}) &\to^p -\frac{\alpha f_\alpha^{-1}(\log \Mit) - \theta}{\alpha I_\alpha \Mit}, \label{eq:smle_mle_error}
\end{align}
where $\Mit=\lim_{n\to\infty} n^{-\alpha}K_n$, $I_\alpha$ is defined by \eqref{eq:sibuya_fisher}, and $f_\alpha^{-1}$ is the inverse of $f_\alpha$ defined by \eqref{eq:df_f_alpha}.
\end{proposition}
We observe that the limit error in \eqref{eq:qmle_mle_error} depends on the ``misspecification error'' $\plug- \theta$,  while the corresponding term in \eqref{eq:smle_mle_error} is replaced by $\alpha f_\alpha^{-1}(\log \Mit)-\theta$. Considering that $\alpha f_\alpha^{-1}(\log \Mit)$ is distributed around $\theta$ as in \Cref{fig:theta_limit}, we expect that the error of $\hat{\alpha}_{n, \plug}$ is larger than $\hat{\alpha}_{n}$ if the plug-in $\plug$ is taken far away from the true value $\theta$ by the users. We prove in \Cref{sec:Numeric} that these errors significantly affect coverage and {mean squared error.} 

\subsection{Application to network data analysis}\label{subsec:network}
Here, we discuss the application of \Cref{cor:mixing_convergence} to network data analysis.
In \cite{crane2018probabilistic, crane2016edge}, the authors propose the ``Hollywood process”,  a statistical model for network data. This is a stochastic process over growing networks that sequentially attach edges to vertices in the same manner as the Ewens--Pitman partition, where $n$ is the total degree,  $K_n$ is the number of vertices, and $S_{n,j}$ is the number of vertices with degree $j$. They define that a growing network has sparsity if and only if 
$\lim_{n\to\infty} n K_n^{-\mu} = 0$, where 
$\mu$ is the degree per vertex, e.g., $\mu = 2$ when the network is bivariate. Using the asymptotics $n^{-\alpha} K_n\to \Mit >0$ (a.s.) by \Cref{lm:ep_asym}, the authors claim that the Hollywood process has sparsity if and only if $\mu^{-1} < \alpha <1$. 

Now, we construct a hypothesis testing of the sparsity based on \Cref{cor:mixing_convergence}. We define the null hypothesis $H_0$ and the alternative hypothesis $H_1$ by
\begin{align*}
    \text{(not sparse) } H_0: 0<\alpha\leq \mu^{-1}, \quad  \text{(sparse) } H_1: \mu^{-1} < \alpha < 1.
\end{align*}
For a constant $\delta \in (0, 1)$, we reject the null $H_0$ if 
\begin{align*}
\sqrt{I_{\hat{\alpha}_n} K_n} (\hat{\alpha}_n - \mu^{-1})> \Phi^{-1}(1-\delta),
\end{align*}
where $\Phi$ is the CDF of the standard normal. 
Then, the significance level of this testing is $\delta$, since
the probability of rejecting the null when $\alpha \leq \mu^{-1}$ is upper bounded by $\Pr(\sqrt{I_{\hat{\alpha}_n} K_n} (\hat{\alpha}_n - \alpha )> \Phi^{-1}(1-\delta))$, which converges to $\delta$ from \Cref{cor:mixing_convergence}. 

\section{Numerical simulation}\label{sec:Numeric}
Firstly, we visualize the limit law $\alpha f_\alpha^{-1}(\log \Mit)$ of the MLE $\hat{\theta}_n$. We sample $\Mit \sim \gmit$ using the rejection algorithm proposed by \cite{devroye2009random} with a sample size $10^6$ and plot the histogram of $\alpha f_\alpha^{-1}(\log \Mit)$ in \Cref{fig:theta_limit}.
We observe that the mean of $\alpha f_\alpha^{-1}(\log \Mit)$ is larger than $\theta$, which {verifies \Cref{prop:biased}}. We also plot the probability density function (pdf) of 
$\normal(\theta, \alpha^2 /f_\alpha'(\theta/\alpha))$, where the variance $\alpha^2/f_\alpha'(\theta/\alpha)$ is the inverse of 
$\lim_{n\to+\infty} \E[(\partial_{\theta} \ell_n(\alpha,\theta))^2]$, i.e., the limit of the Fisher information about $\theta$ (see \Cref{prop:asym_info}).
Note that this normal distribution is a naive guess by standard asymptotic statistics. We observe that 
the limit law is close to the normal distribution $\normal(\theta, \alpha^2 /f_\alpha'(\theta/\alpha))$ when $\alpha$ is small or $\theta$ is large. 

\begin{figure}[htbp]
    \centering
    \includegraphics[width=0.99\linewidth]{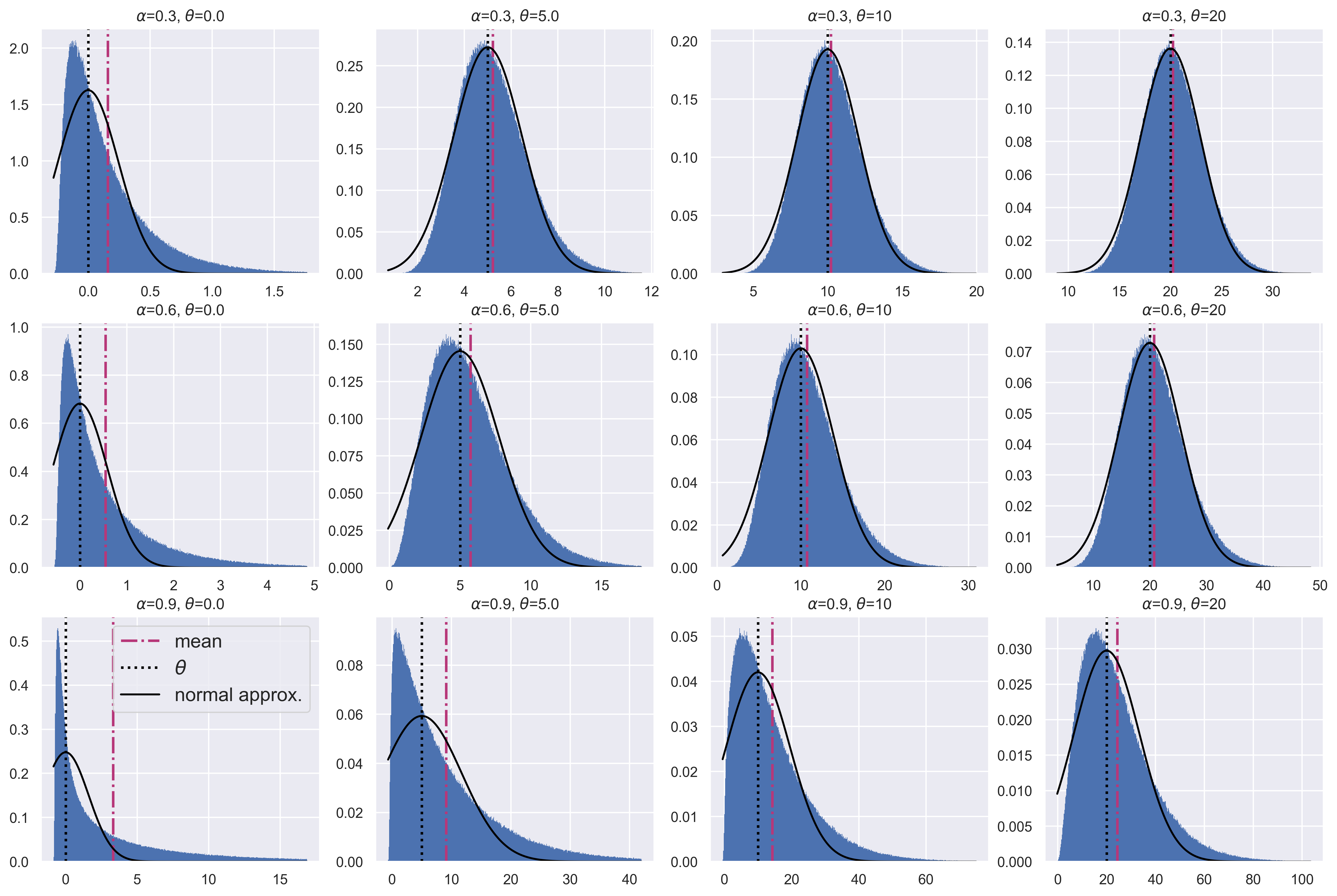}
    \caption{
      {Histogram of $\alpha f_\alpha^{-1}(\log \Mit)$ with sample size $10^6$.
    The solid line is the pdf of $\normal(\theta, \alpha^2 /f_\alpha'(\theta/\alpha))$, where the variance is the inverse of the asymptotic Fisher Information; that is, $\alpha^{-2} f_\alpha'(\theta/\alpha) = \lim_{n\to+\infty}\E[(\partial_{\theta} \ell_n(\alpha,\theta))^2]$. 
      }
    }
    \label{fig:theta_limit}
\end{figure}

{Next, we visualize the asymptotic mixed normality of the MLE $\hat\alpha_{n}$ by plotting the empirical CDF.  Here, we plot two CDFs using different normalization; the CDF of $\sqrt{n^\alpha \E[\Mit] I_\alpha} (\hat\alpha_{n}-\alpha)$
 and the CDF of $\sqrt{K_n I_\alpha} (\hat\alpha_{n}-\alpha)$. Note that $n^\alpha \E[\Mit] I_\alpha$ is the leading term of the Fisher Information about $\alpha$, i.e., $\E[(\partial_{\alpha} \ell_n(\alpha,\theta))^2] = n^\alpha \E[\Mit] I_\alpha + o(n^\alpha)$ (see \Cref{prop:asym_info}). 
These empirical CDFs are computed from $10^5$ Monte Carlo simulations and are plotted after subtracting the CDF of $N(0,1)$. \Cref{fig:cdf_convergence} implies that the CDF of $\sqrt{n^\alpha \E[\Mit] I_\alpha} (\hat\alpha_{n}-\alpha)$ does not converge to the CDF of $N(0,1)$, while the CDF of $\sqrt{K_n I_\alpha} (\hat\alpha_{n}-\alpha)$ does converge to the CDF of $N(0,1)$. These numerical simulations support \Cref{th:smle_asym} and \Cref{cor:mixing_convergence}. 

\begin{figure}[htbp]
  \centering
  \includegraphics[width=0.99\linewidth]{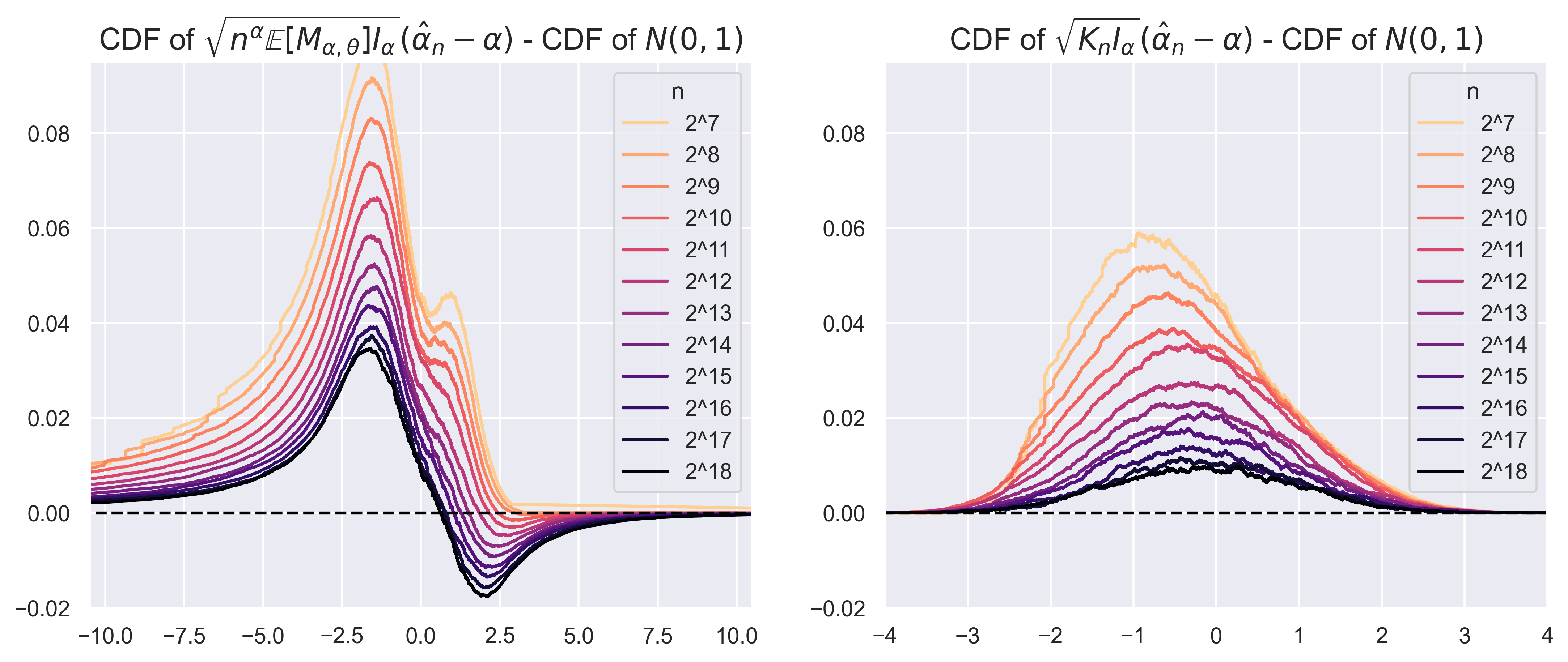}
  \caption{
    {
   The visualization of the asymptotic mixed normality. 
   The left figure plots the difference of CDF of $\sqrt{n^\alpha \E[\Mit] I_\alpha}(\hat{\alpha_{n}}-\alpha)$  to the CDF of 
   $N(0,1)$, while the right figure plots the difference of the CDF of $\sqrt{K_n I_\alpha}(\hat{\alpha}_n-\alpha)$ to the CDF of $N(0,1)$. Simulation setting: 
   $\alpha=0.8$, $\theta=0$, $100000$ Monte Carlo simulations.
    } 
  }
  \label{fig:cdf_convergence}
\end{figure}
}

Finally, we compare $\hat{\alpha}_{n, \theta}$ (the MLE with $\theta$ known), $\hat{\alpha}_{n,0}$ (the QMLE with $\plug = 0$), and $\hat{\alpha}_n$ (the MLE with $\theta$ jointly estimated) based on the {mean squared error (MSE)} and the coverage. Here, we sequentially generate the random partition and compute these estimators as $n$ increases from $n=2^7$ to $2^{17}$. This process is replicated $10^4$ times, and we calculate the {MSE} and the coverage of confidence interval $[\bar{\alpha}_n + \frac{1.96}{\sqrt{K_n I_{\bar{\alpha}_n}}}]$
for each $\bar{\alpha}_n = \hat{\alpha}_{n, \theta}, \hat{\alpha}_{n, 0}, \hat{\alpha}_n$. The results are plotted in \Cref{fig:MSE} and \Cref{fig:coverage}. As $n$ increases, the MSE decreases, and the coverage converges to $0.95$. When $n$ is small and the plugin error $|0 - \theta| = |\theta|$ is large, the MSE and the coverage of the QMLE $\hat{\alpha}_{n, 0}$ are significantly large and small, respectively. In contrast, $\hat{\alpha}_n$ is robust to the scale of $\theta$. These observations support \Cref{prop:error}.

\begin{figure}[htbp]
  \centering
  \includegraphics[width=0.95\linewidth]{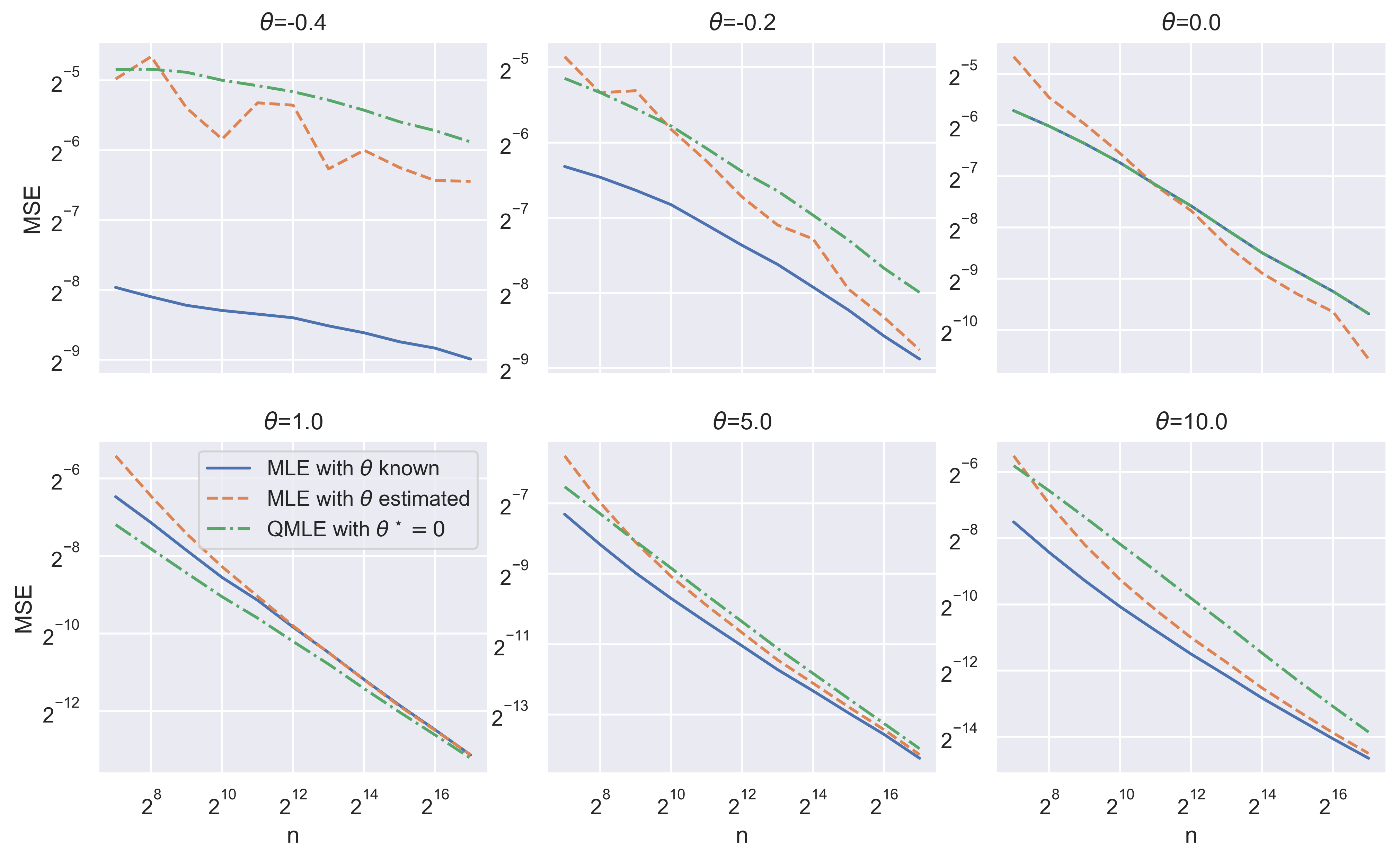}
  \caption{
    {Plots of the MSE of the MLE with $\theta$ known, the MLE with $\theta$ unknown (estimated), and the QMLE with $\plug=0$. We fixed $\alpha$ to $0.6$ and run $10^5$ Monte Carlo simulations.  Note that when $\theta=0$, the QMLE with $\plug=0$ coincides with the MLE with $\theta$ known.}}
  \label{fig:MSE}
    \includegraphics[width=0.95\linewidth]{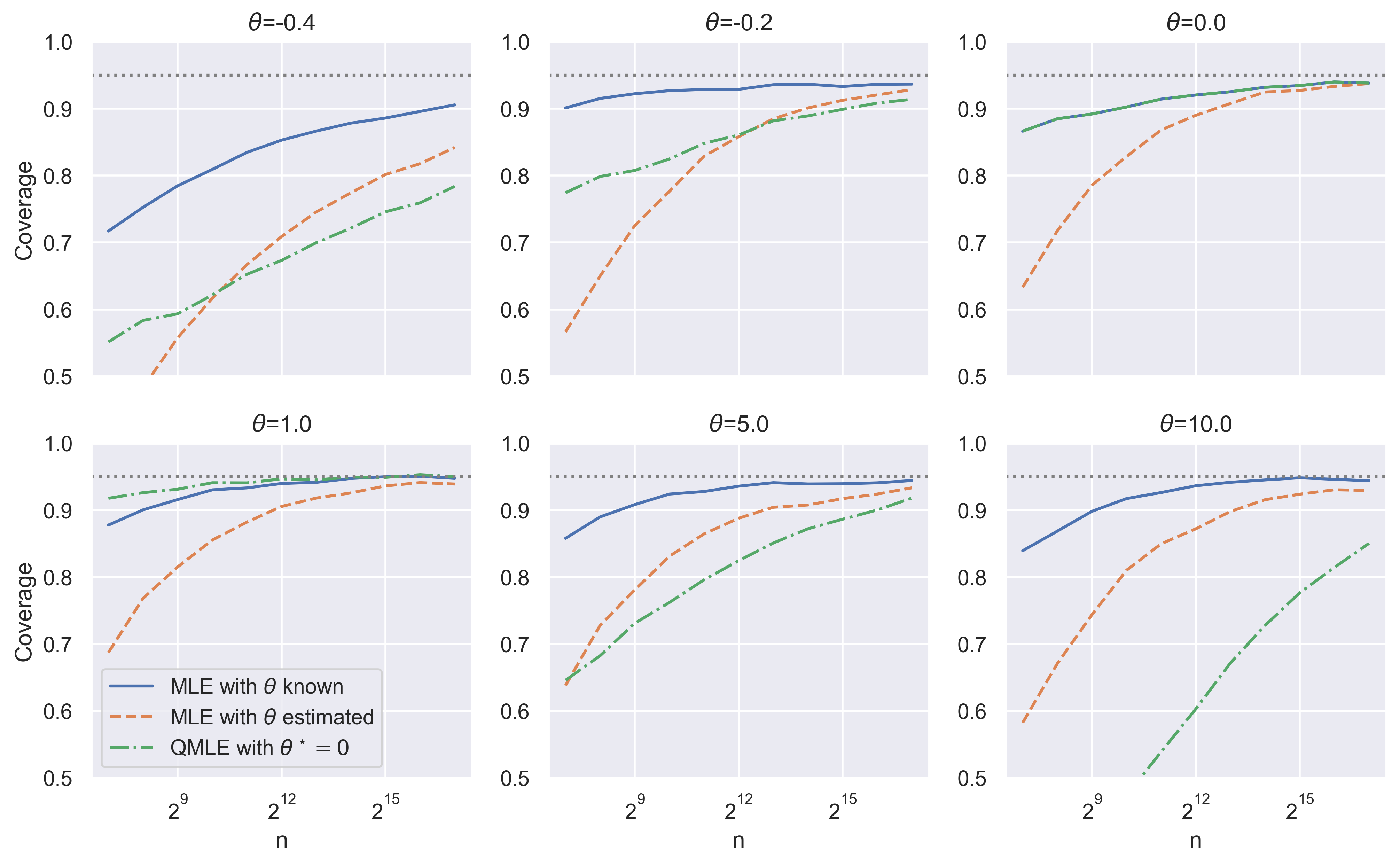}
    \caption{
      {Plots of the coverage of the MLE with $\theta$ known (estimated), the MLE with $\theta$ unknown, and the QMLE with $\plug=0$. We fixed $\alpha$ to $0.6$ and run $10^5$ Monte Carlo simulations. Note that when $\theta=0$, the QMLE with $\plug=0$ coincides with the MLE with $\theta$ known.}
    }
    \label{fig:coverage}
\end{figure}

\section{Proof highlights}\label{sec:proof}
In this section,  we outline the fundamental idea behind the proofs of theorems in \Cref{sec:main}. 
First, we consider the QMLE $\hat{\alpha}_{n,0}$ with $\plug=0$ for simplicity.
\eqref{eq:df_loglikelihood} with $\theta=0$ implies that the log-likelihood with parameter $(\alpha,\theta)=(\alpha,0)$ is given by
\begin{align*}
\ell_n(\alpha, 0) = (K_n-1) \log \alpha + \sum_{j=1}^n S_{n,j} \sum_{i=1}^{j-1}\log (i-\alpha) \quad \text{for all $\alpha>0$}, 
\end{align*}
where $S_{n,j}$ is the number of blocks of size $j$ and $K_n$ is the number of nonempty blocks. 
Then, the score function $\partial_\alpha\ell_n(\alpha, 0)$ is given by
$
\partial_\alpha \ell_n(\alpha, 0) = \frac{K_n-1}{\alpha} - \sum_{j=1}^{n} S_{n,j} \sum_{i=1}^{j-1}\frac{1}{i-\alpha}. 
$
Here, we define the random measure $\mathbb{P}_n$ on $\mathbb{N}$ as the ratio of blocks of size $j$:
\begin{align*}
\forall j \in \mathbb{N}, \quad  \mathbb{P}_n(j) := \frac{S_{n,j}}{\sum_{j'=1}^\infty S_{n,j'}} = \frac{S_{n,j}}{K_n}.
\end{align*}
Note that  $\mathbb{P}_n(j) = 0$ for all $j > n$, as the total number of partitioned balls is $n$. In our proof, we denote $\sum_{j=1}^\infty \mathbb{P}_n(j) f(j)$ by $\mathbb{P}_n f$ for any function $f$ on $\mathbb{N}$. 
Now, we define the random function $\hat{\Psi}_{n,0}(x)=K_n^{-1} \partial_\alpha\ell_n(\alpha, 0)\mid_{\alpha=x}$, which is the score function normalized by $K_n$. Then, the above displays give 
\begin{align*}
    \hat{\Psi}_{n,0}(x) &= \frac{1}{x} -\frac{1}{x K_n} - \sum_{j=1}^n \mathbb{P}_n(j) \sum_{i=1}^{j-1}\frac{1}{i-x} = \frac{1}{x}-\frac{1}{x K_n} - \mathbb{P}_n g_x,
\end{align*}
where $g_x$ is the function on $\mathbb{N}$ defined by $g_x(j) = \sum_{i=1}^{j-1} (i-x)^{-1}$. We observe that $\hat{\Psi}_{n,0}(x)$ is an expectation with respect to the empirical measure $\mathbb{P}_n$, and hence, the asymptotic behavior of the random function $\hat{\Psi}_{n,0}$ is characterized by a suitable convergence of $\mathbb{P}_n$. Here, the convergence $S_{n,j}/K_n \to p_\alpha(j)$ (a.s.) by \Cref{lm:ep_asym} implies 
\begin{align*}
    \forall j \in \mathbb{N}, \quad  \mathbb{P}_n (j) \asconv \mathbb{P} (j) := p_\alpha(j) = \frac{\alpha \prod_{i=1}^{j-1}(i-\alpha)}{j!},
\end{align*}
i.e., the empirical measure $\mathbb{P}_n$ converges to the deterministic measure $\mathbb{P}$ pointwisely. Collectively, we expect that $\hat{\Psi}_{n,0}$ converges to the deterministic function $\Psi$ as follows:
\begin{align*}
   \forall x\in (0,1), \quad  \hat{\Psi}_{n,0}(x) = \frac{1}{x} - \frac{1}{xK_n} - \mathbb{P}_n g_x \underset{\text{in prob.}}{\overset{?}{\to}}\frac{1}{x} - \mathbb{P} g_x =:\Psi(x)
\end{align*}
Here, we emphasize that the above convergence does not follow directly from the pointwise convergence $\mathbb{P}_n(j) \to \mathbb{P}(j)$ because $g_x(j) = \sum_{i=1}^{j-1} (i-x)^{-1}$ is not a bounded function. 
To make the arguments more rigorous, we prove the convergence of $\mathbb{P}_n$ for a suitable set of function $\mathcal{F}$, i.e., 
$| \mathbb{P}_n f - \mathbb{P} f| \to^p 0$ for all $f\in \mathcal{F}$. 
Using this lemma, we show the convergence of $\hat{\Psi}_{n,0}\to \Psi$ and $\hat{\Psi}'_{n,0}\to \Psi'$ in a suitable sense. 
Furthermore, we will argue that $\Psi(\alpha)=0$ and $\Psi'(\alpha) = -I_\alpha < 0$ with $I_\alpha$ being the Fisher information of the Sibuya distribution. Combining all of this, we obtain the consistency of $\hat{\alpha}_n$.
For the asymptotic mixed normality, we use the Martingale CLT for the score function. See \Cref{tb:compare_i.i.d.} for rough comparisons with typical i.i.d. cases. 
\begin{table}[htbp]
    \centering
    \begin{tabular}{c|c|c}
   & $(X_i)_{i=1}^n \iid  \Pr(X;\alpha)$ & Ewens--Pitman partition\\\hline\hline
   Score function & i.i.d. sum & martingale \\\hline
   Empirical CDF  & $F_n(x) = n^{-1} \sum_{i = 1}^n \bm{1}_{\{X_i\leq x\}} $ & $F_n(j) = K_n^{-1} \sum_{j'\leq j} S_{n,j'}$\\\hline
   CDF & $F(x) = \Pr(X\leq x)$ & $F(j) = \sum_{j'\leq j} p_\alpha (j')$\\\hline
   Fisher Information & $nI_\alpha$ & $n^\alpha \E[\Mit] I_\alpha$ \\ \hline
   MLE & $\sqrt{n} (\hat{\alpha}_n-\alpha) \to \normal(0, I_\alpha^{-1})$ & $\sqrt{K_n}(\hat{\alpha}_n-\alpha) \to \normal(0, I_\alpha^{-1})$
    \end{tabular}
    \caption{Comparison with typical i.i.d. parametric models}
    \label{tb:compare_i.i.d.}
\end{table}

We have discussed the QMLE so far, where the unknown $\theta$ is fixed. Now, we consider the MLE $(\hat{\alpha}_n, \hat{\theta}_n)$ that simultaneously estimates $(\alpha, \theta)$. The main difficulty here is that $\hat{\theta}_n$ does not converge to a fixed value, which requires technical arguments. 
The first step is to reduce the dimension of the parameters that we have to consider; we define the function $\hat{y}_n : (0,1) \to (-1, \infty)$ by $\hat{y}_n(x) := \argmax_{y>-x} \ell_n(x, y)$ for all $x\in(0,1)$, 
where $\ell_n(x, y)$ is the log-likelihood \eqref{eq:df_loglikelihood} with $(\alpha,\theta)=(x,y)$. Here we claim that $\hat{y}_n$ is well-defined with a high probability. The gain of introducing $\hat{y}_n$ is that the MLE $(\hat{\alpha}_n, \hat{\theta}_n)$ can be rewritten as the solution of the following one-dimensional maximization problem:
\begin{align*}
  \hat{\alpha}_n \in \argmax_{x\in {(0,1)}} \ell_n(x, \hat{y}_n(x)), \qquad 
  \hat{\theta}_n = \hat{y}_n(\hat{\alpha}_n).
\end{align*}
Here, similarly to $\hat{\Psi}_{n, 0}$, we define the random function $\hat{\Psi}_n$ by $\hat{\Psi}_n(x) := K_n^{-1} \cdot\frac{\diff}{\diff x} \ell_n(x, \hat{y}_n(x))$ for all $x\in (0,1)$.
Then, for the deterministic function $\Psi(x) = x^{-1} - \mathbb{P} g_x$, we again show the convergence of $\hat{\Psi}_{n}\to \Psi$ and $\hat{\Psi}'_n\to \Psi'$ in a suitable sense.

{
\section{Discussion}\label{sec:discussion}
In this paper, we investigated the maximum likelihood estimator (MLE) for the Ewens--Pitman partition and derived its exact asymptotic distribution. 
Specifically, we established the asymptotic mixed normality of the MLE for $\alpha$ and proposed a confidence interval. Below, we outline several promising directions for future research.

\subsection{Quantitative martingale CLT}\label{subsec:clt_rate}
From \Cref{fig:coverage}, we observe that the actual coverage of confidence interval depends on $n$. To investigate the asymptotic behavior of coverage, we aim to identify a rate $c_n\to 0$ such that 
  $$
  \operatorname{d}_{\mathsf{Kol}}\Bigl(
\sqrt{K_n I_{\hat{\alpha}_n}}(\hat\alpha_n - \alpha), N
  \Bigr) \le c_n, \quad c_n \to 0,
  $$
  where $\operatorname{d}_{\mathsf{Kol}}(\cdot, \cdot)$ denotes the Kolmogorov distance, and $N\sim N(0,1)$. Based on the Taylor expansion arguments in our proof, deriving this rate requires establishing a convergence rate for the martingale central limit theorem as follows:
  $$
  \operatorname{d}_{\mathsf{Kol}} \Bigl(
\frac{\sum_{m=1}^n X_m}{\sqrt{\sum_{m=1}^n\E\left[X_{m}^2| \mathcal{F}_{m-1}\right]}}, N
  \Bigr) \le c_n, \quad c_n \to 0,
  $$
  where $X_m$ is a martingale difference sequence. In our case, $X_m$ represents the increments of the score function $\partial_\alpha \ell_n(\alpha,\theta)$. 
  
  If $X_m$ are iid random variables with finite third moment, Berry--Esseen theorem (cf. \cite[Chapter 3]{chen2010normal}) provides $c_n = O(n^{-1/2})$. 
  More generally, if the quadratic variation ${\sum_{m=1}^n\E[X_{m}^2| \mathcal{F}_{m-1}]}$ concentrates around the unconditional variance $\sum_{m=1}^n\E[X_{m}^2]$, then previous results on quantitative martingale CLT (see \cite{mourrat2013rate} and references therein) 
  can be used to estimate the rate $c_n$. 
  However, these results cannot be directly applied  to our setting due to the lack of concentration of 
the quadratic variation around the unconditional variance. Indeed, \Cref{lm:Psi_clt} and \Cref{prop:asym_info} give 
$$
{\sum_{m=1}^n\E[X_{m}^2| \mathcal{F}_{m-1}]} = n^\alpha I_\alpha \Mit  + o_p(n^\alpha), \quad \sum_{m=1}^n \E[X_m^2] = n^\alpha I_\alpha \E[\Mit]  + o(n^\alpha)$$
and $\Pr(\Mit \ne\E[\Mit])=1$ since $\Mit=\lim_{n\to+\infty} K_n/n^\alpha$ follows $\gmit$. 

   We can nevertheless hypothesize the convergence rate $c_n$ by empirically estimating the distance via the empirical measure (see \Cref{fig:clt_rate}). We conjecture from this figure that $c_n$ scales as $n^{-c}$ for some constant $c>0$. 
  \begin{figure}
    \centering
  \includegraphics[width=0.99\linewidth]{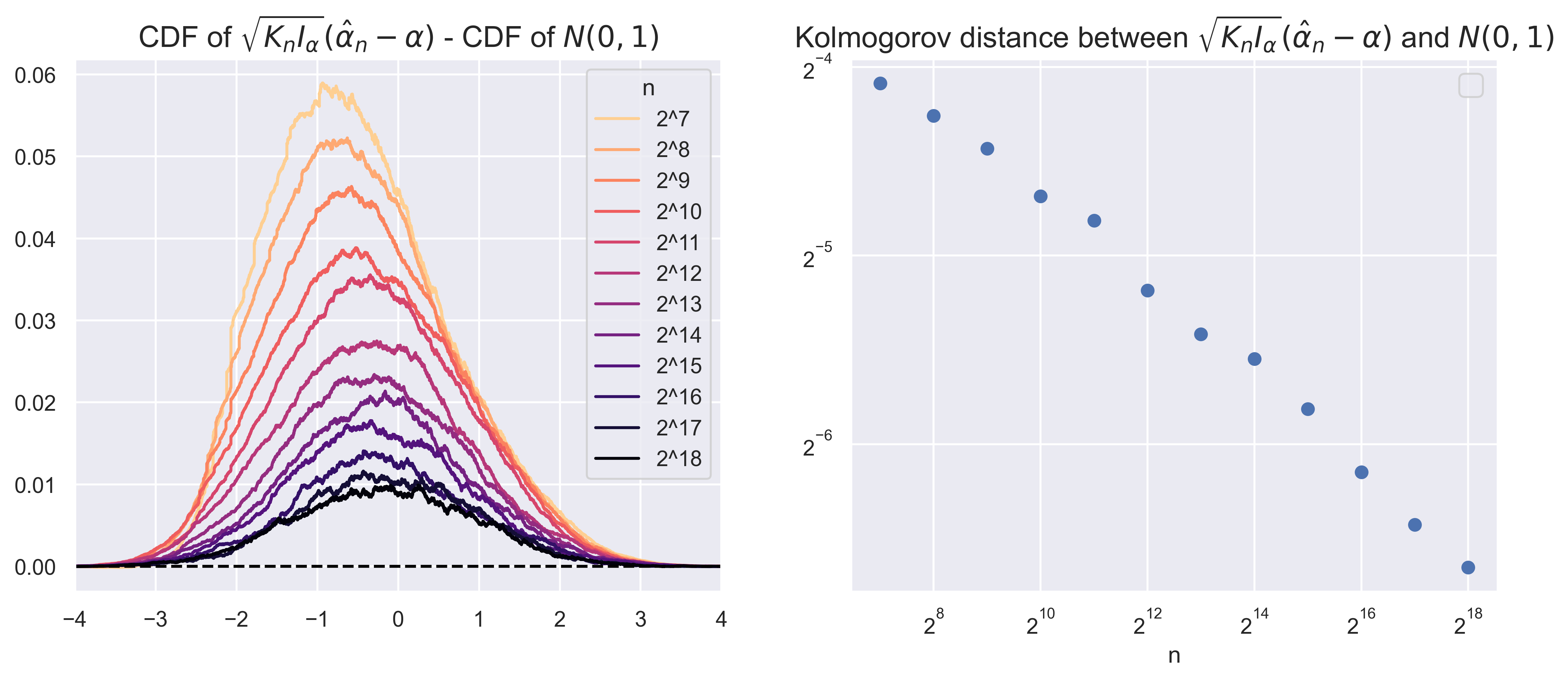}
    \caption{
      {The left figure illustrates the pointwise convergence of the empirical CDF of $\sqrt{K_n I_\alpha}(\hat\alpha_n-\alpha)$ to the CDF of $N(0,1)$. The right figure plots the Kolmogorov distance as $n$ increases. Simulation setting: $\alpha=0,8$, $\theta=0$, $10^5$ Monte Carlo simulations.}
    }
    \label{fig:clt_rate}
  \end{figure}
}

{
\subsection{Extension to the Gibbs partitions}\label{subsec:gibbs}
We are also interested in extending our results to  the Gibbs partition \cite{gnedin2006exchangeable}, which is a class of exchangeable random partitions characterized by the following likelihood:
\begin{align}\label{eq:density_gibbs}
  v_{n,K_n}(\alpha) \cdot \prod_{j=2}^n \left\{
    \prod_{i=1}^{j-1} (-\alpha + i)
  \right\}^{S_{n,j}}
\end{align}
where $\alpha$ is a parameter in $(-\infty, 1)$ and $v_{n,k}(\alpha)$ for $1\le k\le n$ is a non-negative sequence satisfying the backward recursion:
  \begin{align}\label{eq:recursion}
    v_{n, k}(\alpha) = (n-\alpha k) \cdot v_{n+1, k}(\alpha) + v_{n+1,k+1}(\alpha) \quad \text{with} \quad v_{1,1}(\alpha)=1.
  \end{align}
  Equivalently, \eqref{eq:density_gibbs} can be viewed as the marginal likelihood of a random partition generated sequentially, as follows:
  \begin{align*}
    \text{$(n+1)$th ball} \in \begin{cases}
      \text{an existing set $U_i$} & \text{w.p. $\frac{v_{n+1, K_n}(\alpha)}{v_{n,K_n}(\alpha)} \cdot (|U_i|-\alpha)$}, \quad \forall i=1, 2, \dots, K_n \\
      \text{a new set} & \text{w.p. $\frac{v_{n+1, K_n +1}(\alpha)}{v_{n, K_n}(\alpha)}$}
    \end{cases}
  \end{align*}
  The recursion in \eqref{eq:recursion} ensures that the probabilities above sum to $1$. Within this framework, the Ewens--Pitman partition with parameter $(\alpha,\theta)$ emerges as a special class of the Gibbs partitions, where $v_{n,k}$ is explicitly given by $ v_{n,k} (\alpha;\theta) := \frac{\prod_{i=0}^{k-1} (\theta+i\alpha)}{\prod_{i=0}^{n-1}(\theta + i)}$ with $\theta>-\alpha$ and $\alpha\in (0,1)$. 

  
  Importantly, the asymptotic properties of $(K_n, S_{n,j})$ in \Cref{lm:ep_asym} extend to the Gibbs partition; that is, if $\alpha \in (0,1)$, there exists a positive random variable $\mathsf{M}$, referred to as the \textit{$\alpha$-diversity}, such that
  \begin{align}\label{eq:gibbs_asymptotics}
    \frac{K_n}{n^\alpha} \to \mathsf{M} \quad \text{and} \quad \frac{S_{n,j}}{K_n} \to p_\alpha(j) = \frac{\alpha \prod_{i=1}^{j-1}(i-\alpha)}{j!} \quad \text{for each $j\in\mathbb{N}$}
  \end{align}
  almost surely (see \cite[Section 6.1]{pitman2003poisson} and \cite[Lemma 3.11]{pitman2006combinatorial}).

  Suppose that a random partition is generated by the Gibbs partition with an unknown parameter $\alpha\in (0,1)$ and $\{v_{n,k}(\alpha)\}$ satisfying the backward recursion \eqref{eq:recursion}. Let us denote the log-likelihood by $\ell_n^{\text{gibbs}}(\alpha)$:
  $$
  \ell_{n}^{\text{gibbs}}(\alpha) := \log \bigl(v_{n,K_n}(\alpha)\bigr) + \sum_{j=2}^n S_{n,j} \sum_{i=1}^{j-1} \log(i-\alpha).
  $$
  We aim to estimate $\alpha$ by the QMLE $\hat{\alpha}_{n,0} \in \argmax_{\alpha\in (0,1)} \ell_n(\alpha, 0)$ with 
  $$
  \ell_n(\alpha, 0) = (K_n-1) \log \alpha + \sum_{j=1}^n S_{n,j} \sum_{i=1}^{j-1}\log (i-\alpha)
  $$
  Based on the Taylor expansion argument in \Cref{Ap:qmle}, the asymptotic distribution of the QMLE $\hat{\alpha}_{n,0}$ is determined by the asymptotics of $\partial_\alpha \ell_n(\alpha,0)$ and $ \partial_\alpha^2 \ell_n(\alpha, 0)$. For $\partial_\alpha^2 \ell_n(\alpha, 0)$, applying \eqref{eq:gibbs_asymptotics}, we obtain the $\partial_\alpha^2 \ell_n(\alpha, 0)/K_n \to -I_\alpha$ as established in \Cref{lm:random_seq_conv}. 
For $\partial_\alpha \ell_n(\alpha, 0)$, we decompose it as
  \begin{align*}
    \partial_{\alpha} \ell_n(\alpha, 0) &= \partial_{\alpha} \ell_{n}^{\text{gibbs}}(\alpha)  + \partial_{\alpha} \ell_n(\alpha, 0) - \partial_{\alpha} \ell_{n}^{\text{gibbs}}(\alpha)\\
    &= \partial_{\alpha} \ell_{n}^{\text{gibbs}}(\alpha) + \Bigl(\frac{K_n-1}{\alpha} - \frac{v_{n,K_n}'(\alpha)}{v_{n, K_n}(\alpha)}\Bigr). 
  \end{align*}
  Here, $\partial_{\alpha} \ell_{n}^{\text{gibbs}}(\alpha) $ is the score function, which is martingale since the model is well-specified. Thus, its asymptotic distribution can be derived from the martingale CLT as in \Cref{lm:Psi_clt} under suitable conditions on $\{v_{n,k}\}$. 

  To establish the asymptotic distrubution of the QMLE, it remains to verify that $\Bigl(\frac{K_n-1}{\alpha} - \frac{v_{n,K_n}'(\alpha)}{v_{n, K_n}(\alpha)}\Bigr)$ is asymptotically negligible compared to the score function. This requires additional assumptions on $\{v_{n,k}\}$, and formalizing these conditions is left for future research.
}

\bibliographystyle{abbrvnat}
\bibliography{reference}

\section*{Acknowledgments}
The authors would like to thank Koji Tsukuda, Nobuaki Hoshino, Stefano Favaro, and Masaaki Shibuya for helpful comments on our research. 

\section*{Funding}
Takeru Matsuda was supported by JSPS KAKENHI Grant Numbers 19K20220, 21H05205, 22K17865 and JST Moonshot Grant Number JPMJMS2024.
Fumiyasu Komaki was supported by MEXT KAKENHI Grant Number 16H06533, JST CREST Grant Number JPMJCR1763, and AMED Grant Numbers JP21dm0207001 and JP21dm0307009.

\appendix
\section{Analysis of the Fisher Information}
\subsection{Proof of \Cref{prop:sibuya_fisher}}\label{Ap:sibuya_fisher}
Recall that $p_\alpha(j)$ is the probability mass function given by 
\begin{align*}
    \forall j \in \mathbb{N}, \quad  p_\alpha(j) := \frac{\alpha\prod_{i=1}^{j-1}(i-\alpha)}{j!}.
\end{align*}
Then, the Fisher Information $I_\alpha 
= - \sum_{j=1}^\infty p_\alpha(j) \partial_\alpha^2\log p_\alpha(j)
$ can be written as 
\begin{align*}
    I_\alpha = \sum_{j=1}^\infty p_\alpha(j) \left(
    \frac{1}{\alpha^2} + \sum_{i=1}^{j-1} \frac{1}{(i-\alpha)^2}
    \right)
    = \frac{1}{\alpha^2} + \sum_{j=1}^\infty p_\alpha(j) \sum_{i=1}^{j-1} \frac{1}{(i-\alpha)^2}. 
\end{align*}
This completes the proof of (A). Here, $\sum_{j=1}^\infty p_\alpha(j) \sum_{i=1}^{j-1} \frac{1}{(i-\alpha)^2}$ is finite since it is an expectation of a finite function of $j$. By Fubini's theorem, we have 
\begin{align*}
    \sum_{j=1}^\infty p_\alpha(j) \sum_{i=1}^{j-1} \frac{1}{(i-\alpha)^2} &= \sum_{i=1}^\infty \frac{1}{(i-\alpha)^2} \sum_{j=i+1}^\infty p_\alpha(j).
\end{align*}
Thus, for (B) it remains to show the following:
\begin{align}\label{eq:inductive}
   \forall i \in \mathbb{N}, \quad  \sum_{j=i+1}^\infty p_\alpha(j) = \frac{i-\alpha}{\alpha}p_\alpha(i)
\end{align}
We will prove it by induction. Notice that 
\eqref{eq:inductive} holds for $i=1$ since $\sum_{j=2}^\infty p_\alpha(j) = 1-p_\alpha(1) = 1-\alpha = (1-\alpha)p_\alpha(1)/\alpha$. Here we assume  \eqref{eq:inductive} for $i = k \in\mathbb{N}$. Then,
\begin{align*}
    \sum_{j=(k+1) + 1}^\infty p_\alpha(j)
    &= \sum_{j=k+1}^\infty p_\alpha(j) - p_\alpha(k+1)\\
    &= \frac{k-\alpha}{\alpha}p_\alpha(k) - p_\alpha(k+1)\\
    &= \frac{k-\alpha}{\alpha}\cdot \frac{k+1}{k-\alpha} p_\alpha(k+1) - p_\alpha(k+1)\\
    &= \frac{k+1-\alpha}{\alpha} p_\alpha(k+1),
\end{align*}
so \eqref{eq:inductive} holds for $i=k+1$. Therefore, \eqref{eq:inductive} holds for all $i\in \mathbb{N}$.

Finally, let us show the continuity of $I_\alpha$. We write $I_\alpha$ using the formula $(B)$:
\begin{align*}
    I_\alpha = \frac{1}{\alpha^2} + \sum_{j=1}^\infty \frac{p_\alpha(j)}{\alpha(i-\alpha)}.
\end{align*}
Now we claim that $I_\alpha$ converges uniformly on $K$ for any closed subset $K=[s,t] \in (0,1)$, which clearly concludes the proof of continuity.
Since $p_\alpha(j)/\alpha = (\prod_{i=1}^{j-1}(i-\alpha))/j!$ is
non-increasing on $[s,t]$, we observe that
\begin{align*}
\forall j\in \mathbb{N},  \ 
\sup_{\alpha \in K}\left|p_\alpha(j) \frac{1}{\alpha(j-\alpha)}\right| 
&\leq
     \sup_{\alpha \in K}\frac{p_\alpha(j)}{\alpha} \cdot 
    \sup_{\alpha \in K}\frac{1}{j-\alpha}
    = \frac{p_s(j)}{s} \frac{1}{j-t} \leq \frac{p_s(j)}{s(1-t)},
\end{align*}
and $\sum_{j=1}^\infty {p_s(j)}/(s(1-t)) = {1}/({s(1-t)}) < + \infty.$
Then, the Weierstrass M-test implies that $I_\alpha$ converges uniformly on $K$, which concludes the proof.
\subsection{Proof of \Cref{lm:f_alpha}}\label{Ap:f_alpha}
Recall that $f_\alpha: (-1, \infty) \rightarrow \mathbb{R}$ is defined by
\begin{align*}
    \forall z \in (-1,\infty), \quad f_\alpha(z) := \psi(1+z) - \alpha\psi(1+\alpha z),
\end{align*}
where $\psi(z) = \Gamma'(z)/\Gamma(z)$ is the digamma function. Then $\lim_{z\rightarrow 0+} \psi(z) = -\infty$ implies
\begin{align*}
\lim_{z\rightarrow-1+}f_\alpha(z) = \lim_{z\rightarrow-1+} \psi(1+z) - \alpha\psi(1-\alpha)
= -\infty.
\end{align*}
In contrast, by $\psi(z) = \log z + o(1)$ for large $z>0$ (cf. \cite[Section 6]{zwillinger2018crc}), we obtain $f_\alpha(z)=(1-\alpha)\log z + O(1)$ as $z\to+\infty$ and hence 
 \begin{align*}
  \lim_{z\to\infty} f_\alpha (z) = +\infty. 
 \end{align*}
By $\psi^{(1)}(1+z) = \sum_{i=1}^\infty (i + z)^{-2}$ for all $z> -1$, we have
\begin{align*}
    f'_\alpha(z) &= \psi^{(1)} (1+z) - \alpha^2\psi^{(1)}(1+\alpha z)\\
    &
    = \sum_{i=1}^\infty \left\{(i+z)^{-2} - \alpha ^2 (i+z\alpha )^{-2}\right\}\\
    &= \sum_{i=1}^\infty\left\{ (i+z)^{-2} - (i/\alpha  + z)^{-2}\right\}\\
    &>0 && \text{by $\alpha\in(0,1)$},
\end{align*}
which implies that $f_\alpha$ is strictly increasing. Putting all together, we conclude that $f_\alpha(z)$ is bijective from $(-1, \infty)$ to $\mathbb{R}$. 

It remains to show $f_\alpha''(z) < 0$. This follows from the same argument above:
\begin{align*}
    f''_\alpha(z) &= \psi^{(2)} (1+z) - \alpha^3 \psi^{(2)}(1+\alpha z)\\
    & = -2 \sum_{i=1}^{\infty}\left\{
    (i + z)^{-3} - \alpha^3 (i+\alpha z)^{-3} 
    \right\} && \psi^{(2)}(1+z) = -2\sum_{i=1}^\infty (i+z)^{-3} \\
    &= -2 \sum_{i=1}^{\infty}\left\{
    (i + z)^{-3} - (i/\alpha+ z)^{-3} 
    \right\} \\
    &<0 && \text{by $\alpha\in (0,1)$}.
\end{align*}
\subsection{Proof of \Cref{prop:asym_info}}\label{Ap:asym_info}
Recall that Ewens--Pitman partition has the following log-likelihood:
\begin{align*}
   \ell_n(\alpha, \theta) = \sum_{i=1}^{K_n-1}\log (\theta + i\alpha) - \sum_{i=1}^{n-1}\log (\theta+i) + \sum_{j=2}^n S_{n,j} \sum_{i=1}^{j-1}\log (i-\alpha).
\end{align*}
First, we derive the convergence of $\partial_\theta^2 \ell_n(\alpha, \theta)$. This is written as 
\begin{equation*}
  \partial_\theta^2 \ell_n(\alpha, \theta) = -\sum_{i=1}^{K_n-1}\frac{1}{(\theta+i\alpha)^2} + \sum_{i=1}^{n-1} \frac{1}{(\theta + i)^2}.
\end{equation*}  
Note that $\sum_{i=1}^{K_n-1}(\theta+i\alpha)^{-2}$ is a strictly increasing function of $K_n$ and $K_n$ is a nondecreasing random variable. Then the monotone convergence theorem implies that 
\begin{equation}
  \label{eq:exp}
  \E\left[\sum_{i=1}^{K_n-1}\frac{1}{(\theta+i\alpha)^2}\right] \rightarrow \sum_{i=1}^\infty \frac{1}{(\theta+i\alpha)^2} < + \infty. 
\end{equation}
By the above displays and $\psi^{(1)}(1+z) = \sum_{i=1}^\infty (i + z)^{-2}$, the asymptotics of $I_{{\theta,\theta}} ^{(n)} = \E[-\partial_\theta^2 \ell_n(\alpha, \theta)]$ is given by 
\begin{align*}
  I_{{\theta,\theta}} ^{(n)}
  &= \E\left[
  \sum_{i=1}^{K_n-1}\frac{1}{(\theta+i\alpha)^2}
  \right] - \sum_{i=1}^{n-1} \frac{1}{(\theta + i)^2}\\
  &\rightarrow \sum_{i=1}^\infty \frac{1}{(\theta + i\alpha)^2} - \sum_{i=1}^{\infty} \frac{1}{(\theta+i)^2}\\
  &= \frac{1}{\alpha^2} \left(\sum_{i=1}^\infty \frac{1}{(i + \theta/\alpha)^2}
  - \sum_{i=1}^\infty \frac{\alpha^2}{(i + \theta)^2}
  \right) \\
  &= \frac{\psi^{(1)}(1+\theta/\alpha) - \alpha^2 \psi^{(1)}(1+\alpha\cdot\theta/\alpha )}{\alpha^2}\\
  &= \alpha^{-2} f'_\alpha(\theta/\alpha). 
\end{align*}

Next, we derive the leading term for $I_{{\alpha, \theta}}^{(n)}$. 
 Notice that $\E \left[\partial_\theta \ell_n(\alpha, \theta) \right] = 0$ holds since $\ell_n$ is the log-likelihood. Now from this equation we obtain
\begin{equation}
\E\left[\sum_{i=1}^{K_n-1} \frac{1}{\theta + i\alpha}\right] - \sum_{i=1}^{n-1} \frac{1}{\theta + i} = 0.
  \label{eq:exp_1}
\end{equation}
In contrast, $\partial_\theta\partial_\alpha \ell_n(\alpha, \theta)$ is written by
\begin{align*}
  \partial_\theta\partial_\alpha \ell_n(\alpha, \theta)  =  -\sum_{i=1}^{K_n-1}\frac{i}{(\theta +i\alpha)^2}
  = -\frac{1}{\alpha}\sum_{i=1}^{K_n-1} \frac{1}{\theta + i\alpha} + \frac{\theta}{\alpha} \sum_{i=1}^{K_n-1}\frac{1}{(\theta + i\alpha)^2}.
\end{align*}
Thus, \eqref{eq:exp} and \eqref{eq:exp_1} result in
\begin{align*}
  I_{\theta\alpha}^{(n)} &=-\E[\partial_\theta\partial_\alpha \ell_n(\alpha, \theta)]
  \\
  &=\frac{1}{\alpha}\E\left[\sum_{i=1}^{K_n-1} \frac{1}{\theta + i\alpha}\right] - \frac{\theta}{\alpha}\E\left[\sum_{i=1}^{K_n-1}\frac{1}{(\theta+i\alpha)^2}\right]\\
  &= \frac{1}{\alpha}\sum_{i=1}^{n-1} \frac{1}{\theta + i} +O(1)\\
  &= \alpha^{-1}\log n + O(1),
\end{align*}
which completes the proof for $I_{\theta\alpha}^{(n)}$.

Finally, we derive the leading term for $I_{{\alpha,\alpha}}^{(n)}$. Note
\begin{equation*}
  \partial_\alpha^2 \ell_n(\alpha, \theta) = -\sum_{i=1}^{K_n-1}\frac{i^2}{(\theta + i\alpha)^2} - \sum_{j=2}^{n}S_{n, j}\sum_{i=1}^{j-1}\frac{1}{(i-\alpha)^2},
\end{equation*}
where
\begin{align*}
\frac{i^2}{(\theta + i\alpha)^2} = \frac{1}{\alpha^2} -\frac{2\theta}{\alpha^2} \frac{1}{\theta + i\alpha} + \frac{\theta^2}{\alpha^2(\theta+i\alpha)^2}.    
\end{align*}
Then, putting together the above displays and \eqref{eq:exp} and \eqref{eq:exp_1}, one obtains
\begin{align*}
  I^{(n)}_{{\alpha,\alpha}} &= - \E[\partial_\alpha^2 \ell_n(\alpha,\theta)] \\
  &= 
  \E\left[
  \sum_{i=1}^{K_n-1}\frac{i^2}{(\theta + i\alpha)^2}
  \right]
  + \sum_{j=2}^{n}\E[S_{n, j}]\sum_{i=1}^{j-1}\frac{1}{(i-\alpha)^2}\\
  &= \frac{\E[K_n]-1}{\alpha^2}- \frac{2\theta}{\alpha^2} \E\left[\sum_{i=1}^{K_n-1}  \frac{1}{\theta + i\alpha}\right]
  + \frac{\theta^2}{\alpha^2} \E\left[\sum_{i=1}^{K_n-1}\frac{1}{(\theta + i\alpha)^2}\right] + \sum_{j=2}^{n}\E[S_{n, j}]\sum_{i=1}^{j-1}\frac{1}{(i-\alpha)^2}\\
  &= \frac{1}{\alpha^2}\E[K_n] + \sum_{j=2}^{n}\E[S_{n, j}]\sum_{i=1}^{j-1}\frac{1}{(i-\alpha)^2} - 
  \frac{2\theta}{\alpha^2}\sum_{i=1}^{n-1} \frac{1}{\theta + i} + \frac{\theta^2}{\alpha^2} \E\left[\sum_{i=1}^{K_n-1}\frac{1}{(\theta + i\alpha)^2}\right] - \frac{1}{\alpha^2} \\
  &= \frac{\E[K_n]}{\alpha^2} + \sum_{j=2}^{n}\E[S_{n, j}]\sum_{i=1}^{j-1}\frac{1}{(i-\alpha)^2} + O(\log n).
\end{align*}
Here we take $X_n:= \alpha^{-2} K_n+ \sum_{j=2}^{n} S_{n,j} \sum_{i=1}^{j-1}{(i-\alpha)^{-2}}$. Then, one obtains
\begin{align*}
    I_{{\alpha,\alpha}}^{(n)} = \E[X_n] + O(\log n),
\end{align*}
Thus, it remains to show $\E[X_n]/n^\alpha \to \E[\Mit] I_\alpha$. We will show it by the dominated convergence theorem. Note that $X_n$ is bounded by $K_n$ up to a constant:
\begin{align*}
    0 \leq X_n &= \sum_{j=1}^{n} S_{n,j} \left(
    \frac{1}{\alpha^2} + \sum_{i=1}^{j-1}\frac{1}{(i-\alpha)^2}
    \right) \\
    &= 
    \sum_{j=1}^n S_{n,j} \sum_{i=0}^{j-1}  \frac{1}{(i-\alpha)^2}
    \notag\\
    &\leq
    \sum_{j=1}^{n}S_{n,j} \cdot  \sum_{i=0}^{\infty}\frac{1}{(i-\alpha)^2}\\
    &= K_n C_\alpha && \sum_{j}S_{n,j} = K_n
\end{align*}
where $C_\alpha := \sum_{i=0}^{\infty}\frac{1}{(i-\alpha)^2} <+\infty$. 
In contrast, \Cref{cor:finite_conv} with $g(j) = \sum_{i=0}^{j-1} (i-\alpha)^{-2}$ and $K_n/n^\alpha \rightarrow \Mit$ (a.s.) yield
\begin{align*}
    \frac{X_n}{n^\alpha} &= \frac{K_n}{n^\alpha} \sum_{j=1}^\infty\frac{S_{n,j}}{K_n} \sum_{i=0}^{j-1}\frac{1}{(i-\alpha)^2} \\
    &\rightarrow \Mit \sum_{j=1}^\infty p_\alpha(j)
    \sum_{i=0}^{j-1}\frac{1}{(i-\alpha)^2} && \text{$\frac{K_n}{n^\alpha}\to \Mit$ and \Cref{cor:finite_conv}}\\
    &= \Mit\left(\frac{1}{\alpha^2} + \sum_{j=2}^\infty p_\alpha(j) \sum_{i=1}^{j-1}\frac{1}{(i-\alpha)^2}\right) \\
    &= \Mit I_\alpha \ \text{(a.s.)} && \text{by (A) of \Cref{prop:sibuya_fisher}}. 
\end{align*}
Putting together the above displays and $K_n/n^\alpha \rightarrow \Mit$ in mean by (A) of \Cref{lm:ep_asym}, the dominated convergence implies that
\begin{align*}
    n^{-\alpha}\E[X_n] = \E[n^{-\alpha} X_n] \rightarrow \E[\Mit I_\alpha] = \E[\Mit] I_\alpha.
\end{align*}
This concludes the proof.

\section{Convergence of empirical measure}\label{sec:proof_empirical_measure}
We fix $\alpha\in(0,1)$ and $\theta > -\alpha$. We define the random measure and deterministic measure by  
\begin{align*}
    \forall j \in \mathbb{N}, \quad  \mathbb{P}_n(j) := \frac{S_{n,j}}{K_n}, \ \ \ \mathbb{P}(j):=p_\alpha(j) = \frac{\alpha\prod_{i=1}^{j-1} (i-\alpha)}{j!}.
\end{align*}
For any function $f$ on $\mathbb{N}$, 
we write $\mathbb{P}_n f = \sum_{j=1}^\infty \mathbb{P}_n(j) f(j)$ and $\mathbb{P} f = \sum_{j=1}^\infty \mathbb{P}(j) f(j)$. This section aims to show a suitable convergence of $\mathbb{P}_n$ to $\mathbb{P}$.
\begin{lemma}\label{lm:abs_conv}
$\sum_{j=1}^\infty|\mathbb{P}_n(j)-\mathbb{P}(j)| \to 0$ (a.s.). 
\end{lemma}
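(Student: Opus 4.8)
The plan is to observe that $\mathbb{P}_n$ and $\mathbb{P}$ are both genuine \emph{probability} mass functions on $\mathbb{N}$, so that the merely pointwise convergence $\mathbb{P}_n(j)\to\mathbb{P}(j)$ supplied by \Cref{lm:ep_asym}(B) can be upgraded to convergence in total variation by a pathwise application of Scheffé's lemma. Concretely, I would first record the two total-mass identities: $\sum_{j\ge 1}\mathbb{P}_n(j)=K_n^{-1}\sum_{j\ge 1}S_{n,j}=1$ for every $n$ (a finite sum, since $S_{n,j}=0$ for $j>n$), and $\sum_{j\ge 1}\mathbb{P}(j)=\sum_{j\ge 1}p_\alpha(j)=1$ because the Sibuya distribution is a probability law (equivalently, its probability generating function $\sum_{j\ge1}p_\alpha(j)s^j=1-(1-s)^\alpha$ equals $1$ at $s=1$).

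Next I would pass to a single almost sure event carrying all coordinates at once: by \Cref{lm:ep_asym}(B), for each fixed $j$ the event $\{\mathbb{P}_n(j)\to\mathbb{P}(j)\}$ has probability one, and intersecting over the countably many $j\in\mathbb{N}$ yields an almost sure event $\Omega_0$ on which $\mathbb{P}_n(j)\to\mathbb{P}(j)$ simultaneously for all $j$. It then remains to prove the deterministic statement that $\sum_{j\ge 1}|\mathbb{P}_n(j)-\mathbb{P}(j)|\to 0$ on $\Omega_0$.

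Fixing $\omega\in\Omega_0$ and $J\in\mathbb{N}$, I would split
\[
\sum_{j\ge 1}|\mathbb{P}_n(j)-\mathbb{P}(j)|
\;\le\; \sum_{j=1}^{J}|\mathbb{P}_n(j)-\mathbb{P}(j)|
\;+\;\sum_{j>J}\mathbb{P}_n(j)\;+\;\sum_{j>J}\mathbb{P}(j),
\]
and handle the three pieces separately. The first is a finite sum of terms each tending to $0$, so it vanishes as $n\to\infty$ for fixed $J$. For the middle piece, the total-mass identities give $\sum_{j>J}\mathbb{P}_n(j)=1-\sum_{j=1}^{J}\mathbb{P}_n(j)\to 1-\sum_{j=1}^{J}\mathbb{P}(j)=\sum_{j>J}\mathbb{P}(j)$. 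Hence $\limsup_{n\to\infty}\sum_{j\ge 1}|\mathbb{P}_n(j)-\mathbb{P}(j)|\le 2\sum_{j>J}\mathbb{P}(j)$, and letting $J\to\infty$ (the tail of the convergent series $\sum_j p_\alpha(j)=1$) closes the argument, giving convergence to $0$ on $\Omega_0$, i.e.\ almost surely.

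As for the main obstacle: there is no deep one. The only point requiring a moment's thought is conceptual, namely that the constraint $\|\mathbb{P}_n\|_1=\|\mathbb{P}\|_1=1$ already tames the heavy tail of $p_\alpha$ — the same tail that will make $g_x$-type test functions fail to be $\mathbb{P}$-integrable in the subsequent lemmas — so that no additional uniform-integrability or tail-rate estimate is needed here beyond Scheffé's lemma; the rest is the routine countable-intersection step to obtain a single almost sure event.
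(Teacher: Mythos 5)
Your proof is correct: both ingredients you use (the pointwise almost sure convergence $\mathbb{P}_n(j)\to\mathbb{P}(j)$ from \Cref{lm:ep_asym}(B), collected on a single event via a countable intersection, plus the total-mass identities $\sum_j\mathbb{P}_n(j)=\sum_j\mathbb{P}(j)=1$) are exactly the inputs the paper uses, and your truncation step is sound. The implementation, however, differs from the paper's: the paper runs the Fatou-lemma version of Scheff\'e's argument, applying Fatou to the nonnegative sequence $\mathbb{P}_n(j)+\mathbb{P}(j)-|\mathbb{P}_n(j)-\mathbb{P}(j)|$ and subtracting the constant total mass $2$ to conclude $\limsup_n\sum_j|\mathbb{P}_n(j)-\mathbb{P}(j)|=0$, whereas you prove the same Scheff\'e-type upgrade by an epsilon-of-room decomposition: a finite block $j\le J$ killed by pointwise convergence, and a tail whose $\mathbb{P}_n$-mass is controlled through $\sum_{j>J}\mathbb{P}_n(j)=1-\sum_{j\le J}\mathbb{P}_n(j)\to\sum_{j>J}\mathbb{P}(j)$, followed by $J\to\infty$. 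Your route is slightly more elementary (no limit-interchange tool beyond finite sums is needed), while the paper's Fatou argument is more compact and dispenses with the auxiliary truncation parameter; both deliver the same conclusion with no gap, and your closing remark is apt — the heavy tail of $p_\alpha$ is irrelevant here precisely because total mass is conserved, in contrast to the unbounded test functions handled later in \Cref{lm:logj_conv}.
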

\begin{proof}
By the point-wise convergence $S_{n,j}/K_n \to p_\alpha(j)$ (a.s.) from (B) of \Cref{lm:ep_asym} and the fact that $p_\alpha(j)$ is a density; that is, $\sum_{j=1}^\infty p_\alpha(j) = 1$, Scheffé's lemma \cite{scheffe1947useful} implies $\sum_{j=1}^\infty|\mathbb{P}_n(j)-\mathbb{P}(j)| \to 0$ (a.s.). 
\end{proof}
This lemma gives the following corollary. 
\begin{corollary}\label{cor:finite_conv}
$\mathbb{P}_n f \to \mathbb{P} f$ a.s. for any bounded function $f$ on $\mathbb{N}$. 
\end{corollary}
For each $x\in[0,1)$, define the function $g_x$ on $\mathbb{N}$ by
\begin{align}\label{eq:df_logj_set}
\forall x\in[0,1), \quad  \forall j \in \mathbb{N}, \quad  g_x (j) := \sum_{i=1}^{j-1} \frac{1}{i-x}
\end{align}
We claim that the leading term of the score function $\partial_\alpha \ell_n(\alpha,\theta)$ can be written as the expectation of $g_x$ with respect to $\mathbb{P}_n$. Notice that the likelihood formula 
\eqref{eq:df_loglikelihood} implies 
$$
\partial_\alpha \ell_n(\alpha, \theta) = 
\sum_{i=1}^{K_n-1} \frac{i}{\theta + i\alpha} - \sum_{j=1}^n S_{n,j} \sum_{i=1}^{j-1}\frac{1}{i-\alpha},
$$
Then, $K_n^{-1}\partial_\alpha \ell_n (\alpha,\theta)$ can be written as 
\begin{align}
    \frac{\partial_\alpha \ell_n (\alpha,\theta)}{K_n}
     &= \frac{1}{K_n}\sum_{i=1}^{K_n-1} \frac{i}{\theta + i\alpha} - \sum_{j=1}^n \frac{S_{n,j}}{K_n} \sum_{i=1}^{j-1}\frac{1}{i-\alpha} \nonumber\\
    &= \frac{K_n-1}{\alpha K_n} - \frac{\theta}{\alpha K_n} \sum_{i=1}^{K_n-1} \frac{1}{\theta + i\alpha} - \sum_{j=1}^n \frac{S_{n,j}}{K_n} \sum_{i=1}^{j-1}\frac{1}{i-\alpha} \nonumber \\
    &= \frac{1}{\alpha} - \mathbb{P}_n g_\alpha 
    -\frac{1}{\alpha K_n} - \frac{\theta}{\alpha K_n} \sum_{i=1}^{K_n-1} \frac{1}{\theta + i\alpha}\label{eq:case_1}.
\end{align}
Notice that the third and fourth terms are negligible since $K_n\to+\infty$. 
The next lemma shows the convergence of $\mathbb{P}_n g_x$ for each $x\in [0,1)$. 
\begin{lemma}\label{lm:logj_conv}
$\mathbb{P}_n g_x \to^p \mathbb{P} g_x <+\infty$ for all $x\in[0,1)$. 
\end{lemma}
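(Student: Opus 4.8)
The plan is to prove the convergence $\mathbb{P}_n g_x \to^p \mathbb{P} g_x$ by a truncation argument, splitting $g_x$ into a bounded part and a tail part and controlling each separately. First I would check that $\mathbb{P} g_x < +\infty$: since $g_x(j) = \sum_{i=1}^{j-1}(i-x)^{-1} = O(\log j)$ and $\mathbb{P}(j) = p_\alpha(j) \sim \frac{\alpha}{\Gamma(1-\alpha)} j^{-(1+\alpha)}$ is heavy-tailed of index $1+\alpha > 1$, the series $\sum_j p_\alpha(j) g_x(j)$ converges absolutely (a logarithmic factor against a summable power tail). This also shows the ``target'' is finite, which is the second assertion.

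For the convergence itself, fix $x \in [0,1)$ and a truncation level $m \in \mathbb{N}$. Write $g_x = g_x \mathbbm{1}_{\{\cdot \le m\}} + g_x \mathbbm{1}_{\{\cdot > m\}}$. The bounded part $g_x \mathbbm{1}_{\{\cdot \le m\}}$ is a bounded function on $\mathbb{N}$, so by \Cref{cor:finite_conv}, $\mathbb{P}_n (g_x \mathbbm{1}_{\{\cdot \le m\}}) \to \mathbb{P}(g_x \mathbbm{1}_{\{\cdot \le m\}})$ almost surely, hence in probability. For the tail part I would use the crude bound $g_x(j) \le g_x(j)$ is increasing and, more usefully, $g_x(j) \le C_x \log(j+1) \le C_x' j^{\beta}$ for any small $\beta > 0$ (say $\beta < \alpha/2$). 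Then I need a uniform-in-$n$ bound on $\mathbb{P}_n(g_x \mathbbm{1}_{\{\cdot > m\}}) \le C_x' \sum_{j > m} j^\beta \mathbb{P}_n(j)$. The key point is that $\sum_{j} j^{\beta} \mathbb{P}_n(j) = K_n^{-1}\sum_j j^\beta S_{n,j}$ is, up to the $\beta$-power, a quantity whose expectation or almost-sure behavior is controlled: one has $\sum_j j S_{n,j} = n$ exactly, and by interpolation / moment control for the Sibuya-type tail of $(S_{n,j}/K_n)$ the $\beta$-th moment $\sum_j j^\beta \mathbb{P}_n(j)$ stays bounded (in probability, or in $L^1$) as $n \to \infty$ since $\beta < \alpha$. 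Combining, for any $\varepsilon > 0$ I can choose $m$ large so that both $\mathbb{P}(g_x\mathbbm{1}_{\{\cdot>m\}}) < \varepsilon$ and $\limsup_n \Pr(\mathbb{P}_n(g_x \mathbbm{1}_{\{\cdot > m\}}) > \varepsilon)$ is small, and then let $n \to \infty$ in the bounded part.

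An alternative, possibly cleaner route: since $g_x(j) \le g_0(j) + j$-independent corrections for $x \ge 0$ and since $g_x$ is dominated by an affine-in-$\log$ function, one can instead invoke \Cref{lm:abs_conv} directly with a weight: $|\mathbb{P}_n g_x - \mathbb{P} g_x| \le \sum_j |\mathbb{P}_n(j) - \mathbb{P}(j)| g_x(j)$, and split this sum at level $m$, bounding the head by $g_x(m) \sum_j |\mathbb{P}_n(j)-\mathbb{P}(j)| \to 0$ and the tail by $\sum_{j>m}(\mathbb{P}_n(j)+\mathbb{P}(j))g_x(j)$, reducing again to the uniform tail control of the weighted measures.

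The main obstacle I anticipate is the uniform-in-$n$ control of the tail $\sum_{j > m} g_x(j)\,\mathbb{P}_n(j)$: the pointwise convergence $S_{n,j}/K_n \to p_\alpha(j)$ from \Cref{lm:ep_asym} gives no control on the joint behavior of large-$j$ terms, and $\mathbb{P}_n$ is a random measure whose mass near $j \asymp n$ must be shown to be negligible after weighting by $g_x \sim \log j$. Establishing that $\sum_j j^\beta \mathbb{P}_n(j) = O_p(1)$ for some $\beta \in (0, \alpha)$ — equivalently a fractional-moment bound on the block-size distribution — is the crux, and I would expect to need either a direct computation with the Ewens--Pitman sampling formula / exchangeable partition probability function, or the known asymptotics $S_{n,j} = O_p(n^\alpha j^{-(1+\alpha)})$ uniformly in $j$ (Karlin-type occupancy estimates), to close the argument.
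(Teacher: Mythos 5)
Your overall strategy (truncate $g_x$, use \Cref{cor:finite_conv} or \Cref{lm:abs_conv} on the bounded head, control the tail uniformly in $n$) is reasonable, but the step you yourself identify as the crux is genuinely missing, and the interpolation you sketch does not supply it. The only exact identity available is $\sum_j j\,S_{n,j}=n$, and Jensen's inequality (concavity of $t\mapsto t^\beta$) then gives only $\sum_j j^\beta\,\mathbb{P}_n(j)\le\bigl(\sum_j j\,\mathbb{P}_n(j)\bigr)^\beta=(n/K_n)^\beta \asymp n^{\beta(1-\alpha)}\to\infty$, so no choice of $\beta\in(0,\alpha)$ yields $O_p(1)$ this way. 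The pointwise almost-sure convergence $S_{n,j}/K_n\to p_\alpha(j)$ from \Cref{lm:ep_asym} likewise gives no control that is uniform in $j$, so the mass of $\mathbb{P}_n$ near $j\asymp n$, weighted by $g_x(j)\sim\log j$, is not ruled out by anything you have established. Closing the argument would require a new quantitative input such as $\E[S_{n,j}]=O\bigl(n^\alpha j^{-(1+\alpha)}\bigr)$ uniformly in $j\le n$ (a Karlin-type occupancy estimate or an explicit EPPF moment computation) — exactly the machinery you flag as needed but do not provide. As written, the proof does not go through.

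For comparison, the paper sidesteps tail control of the random measure entirely. It first reduces the claim to the single point $x=\alpha$: for any other $x$, the function $g_x-g_\alpha$ is bounded in $j$ (the difference telescopes into the convergent series $\sum_i (x-\alpha)/((i-x)(i-\alpha))$), so \Cref{cor:finite_conv} handles it. At $x=\alpha$ it uses the identity that, up to explicit $O(K_n^{-1})$ and $O(K_n^{-1}\log K_n)$ terms, $\mathbb{P}_n g_\alpha-\mathbb{P} g_\alpha=-K_n^{-1}\partial_\alpha\ell_n(\alpha,\theta)$, where $\mathbb{P} g_\alpha=\alpha^{-1}$ comes from \Cref{lm:Psi_property}; then Chebyshev with $\E[(\partial_\alpha\ell_n(\alpha,\theta))^2]=I^{(n)}_{\alpha\alpha}=O(n^\alpha)$ (\Cref{prop:asym_info}) gives $\partial_\alpha\ell_n(\alpha,\theta)=o_p(n^\alpha)=o_p(K_n)$. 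In effect, the second-moment information you would need to prove by hand for the tail of $\mathbb{P}_n$ is obtained for free from the score/Fisher-information structure; if you wish to keep the truncation route, you would essentially be re-deriving those occupancy moment bounds.
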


Note that \Cref{lm:logj_conv} does not directly follow from \Cref{cor:finite_conv} as $g_x$ is not bounded.
For the proof, we define the deterministic function $\Psi: (0,1) \rightarrow\mathbb{R}$ as
\begin{align} \label{eq:df_Psi}
  \forall x\in (0,1), \quad  \Psi(x) := 
  \frac{1}{x} - \mathbb{P} g_x = \frac{1}{x} - \sum_{j=1}^{\infty} p_{\alpha}(j)\sum_{i=1}^{j-1}\frac{1}{i-x}.
\end{align}
\Cref{lm:Psi_property} below shows some properties of $\Psi$.
\begin{lemma}\label{lm:Psi_property} 
Let $I_\alpha$ be the Fisher information defined by \eqref{eq:sibuya_fisher}. Then, we have 
  \begin{enumerate}
    \item[(A)] $\Psi$ is of class $C^{1}$ on $(0,1)$.
     \item[(B)] $\Psi'(x) = -x^{-2}- \sum_{j=2}^{\infty} p_\alpha(j) \sum_{i=1}^{j-1} (i-x)^{-2} < 0$ and $\Psi'(\alpha) = -I_\alpha$.
     \item[(C)] $\Psi(\alpha) = 0$, i.e., $\mathbb{P} g_\alpha = \alpha^{-1}$
  \end{enumerate}
\end{lemma}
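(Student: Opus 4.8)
The plan is to dispatch the three items in the order (A) together with the derivative formula in (B), then the evaluation $\Psi'(\alpha)=-I_\alpha$, and finally the identity (C), with a single analytic engine behind everything: the heavy-tail estimate $p_\alpha(j)\sim \tfrac{\alpha}{\Gamma(1-\alpha)}\,j^{-(1+\alpha)}$ coming from Stirling's formula, which dominates the at-most-logarithmic growth of $g_x(j)=\sum_{i=1}^{j-1}(i-x)^{-1}$ and the bounded growth of $\partial_x g_x(j)=\sum_{i=1}^{j-1}(i-x)^{-2}$.

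For (A) and the formula in (B): I would write $\Psi(x)=x^{-1}-\sum_{j\ge 2}p_\alpha(j)g_x(j)$ (noting $g_x(1)=0$), observe that $x^{-1}$ is $C^\infty$ on $(0,1)$, and differentiate the series term by term. Fixing a compact interval $[a,b]\subset(0,1)$, for $x\in[a,b]$ one has $\partial_x g_x(j)=\sum_{i=1}^{j-1}(i-x)^{-2}\le \sum_{i\ge 1}(i-b)^{-2}=:C_b<\infty$ since $i-b\ge 1-b>0$; hence $|p_\alpha(j)\,\partial_x g_x(j)|\le C_b\,p_\alpha(j)$ and $\sum_j p_\alpha(j)=1$, so by the Weierstrass $M$-test the differentiated series converges uniformly on $[a,b]$ to a continuous function, while the undifferentiated series converges (e.g.\ at $x=\alpha$, where $p_\alpha(j)g_\alpha(j)=O(j^{-1-\alpha}\log j)$ is summable). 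The standard theorem on term-by-term differentiation of function series then gives that $\Psi$ is $C^1$ on $(0,1)$ with $\Psi'(x)=-x^{-2}-\sum_{j\ge 2}p_\alpha(j)\sum_{i=1}^{j-1}(i-x)^{-2}$; every summand is $\le 0$ and the leading term $-x^{-2}$ is strictly negative, so $\Psi'(x)<0$.

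For $\Psi'(\alpha)=-I_\alpha$: I would simply substitute $x=\alpha$ into the derivative formula just obtained and compare with formula (A) of \Cref{prop:sibuya_fisher}, namely $I_\alpha=\alpha^{-2}+\sum_{j\ge 1}p_\alpha(j)\sum_{i=1}^{j-1}(i-\alpha)^{-2}$; the two expressions match up to sign. For (C), I would use that $\log p_\alpha(j)=\log\alpha+\sum_{i=1}^{j-1}\log(i-\alpha)-\log(j!)$, so $\partial_\alpha\log p_\alpha(j)=\alpha^{-1}-g_\alpha(j)$ and $\partial_\alpha p_\alpha(j)=p_\alpha(j)\bigl(\alpha^{-1}-g_\alpha(j)\bigr)$. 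Differentiating the identity $\sum_{j\ge1}p_\alpha(j)\equiv 1$ in $\alpha$ — legitimate because $|\partial_\alpha p_\alpha(j)|\le p_\alpha(j)(\alpha^{-1}+g_\alpha(j))=O(j^{-1-\alpha}\log j)$ is summable locally uniformly in $\alpha$ — yields $0=\alpha^{-1}-\mathbb{P}g_\alpha$, i.e.\ $\mathbb{P}g_\alpha=\alpha^{-1}$ and hence $\Psi(\alpha)=0$.

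The only nontrivial point, and the one I would be most careful about, is the interchange of differentiation with the infinite sums — both in producing the derivative formula in (B) and in the ``score has mean zero'' step used for (C). In each case the justification is the same and is the heart of the argument: the Sibuya tail $p_\alpha(j)=\Theta(j^{-1-\alpha})$ provides a summable, locally uniform majorant for the relevant term and its $x$- (resp.\ $\alpha$-) derivative, after which Weierstrass/dominated-convergence arguments apply routinely.
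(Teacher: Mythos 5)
Your proposal is correct and follows essentially the same route as the paper's proof: a Weierstrass $M$-test on compact subintervals to justify term-by-term differentiation in $x$ (giving (A) and the formula and sign in (B)), identification of $\Psi'(\alpha)$ with $-I_\alpha$ via formula (A) of \Cref{prop:sibuya_fisher}, and differentiation of $\sum_{j\ge 1}p_\alpha(j)=1$ under the sum—justified by a locally uniform summable majorant of order $j^{-1-\alpha}\log j$—for (C). The only cosmetic point is that for the term-by-term differentiation theorem you should cite convergence of the undifferentiated series at a point of the chosen interval $[a,b]$ (which follows from the same tail bound for any fixed $x$, or by enlarging $[a,b]$ to contain $\alpha$), rather than specifically at $x=\alpha$.
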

\begin{proof}
  See \Cref{Ap:Psi_property}. 
\end{proof}

\subsection{Proof of \Cref{lm:logj_conv}}
$\mathbb{P} g_x <+\infty$, that is, the finiteness of $\sum_{j=1}^\infty p_\alpha(j) \sum_{i=1}^{j-1} (i-x)^{-1}$, is shown at the beginning of \Cref{Ap:Psi_property}, so we assume it.
Suppose the claim $\mathbb{P}_n g_x \to^p \mathbb{P} g_x$ holds for $x=\alpha$. Then, the triangle inequality implies
 \begin{align*}
     \forall x\in[0,1), \quad |\mathbb{P}_n g_x - \mathbb{P} g_x| \leq |\mathbb{P}_n g_\alpha - \mathbb{P} g_\alpha| + |\mathbb{P}_n (g_x - g_\alpha) - \mathbb{P} (g_x - g_\alpha)|,
 \end{align*}
 where the first term is $o_p(1)$ by the assumption. For the second term, we see that $g_x-g_\alpha$ is a bounded function.
\begin{align*}
    \forall j \in \mathbb{N}, \ |g_x (j) - g_\alpha (j)| = \left|\sum_{j=1}^{j-1} \left(\frac{1}{i-x} - \frac{1}{i-\alpha}\right)\right| \leq \sum_{j=1}^\infty \frac{|x-\alpha|}{(i-x)(i-\alpha)} < +\infty .
\end{align*}
Then, \Cref{cor:finite_conv} applied with $f=g_x-g_\alpha$ implies $|\mathbb{P}_n (g_x - g_\alpha) - \mathbb{P} (g_x - g_\alpha)|= o_p(1)$. Thus, it suffices to show $\mathbb{P}_n g_x \to^p \mathbb{P} g_x$ for $x=\alpha$.
Now \eqref{eq:case_1} and $\alpha^{-1} = \mathbb{P} g_\alpha$ by (C)-\Cref{lm:Psi_property} imply
\begin{align*}
    \mathbb{P}_n g_\alpha - \mathbb{P}g_\alpha &= \mathbb{P}_n g_\alpha - \frac{1}{\alpha}= \underbrace{ - \frac{1}{\alpha K_n} -\frac{\theta}{\alpha K_n}\sum_{i=1}^{K_n-1} \frac{1}{\theta + i\alpha}}_{=o_p(1)} - \frac{\partial_\alpha \ell_n (\alpha,\theta)}{K_n}
\end{align*}
Now, for any $\epsilon>0$, 
\begin{align*}
 \Pr\left(\left|n^{-\alpha}\cdot \partial_\alpha \ell_n(\alpha, \theta) \right| > \epsilon\right)
  &\le n^{-2\alpha}\epsilon^{-2}{I_{{\alpha,\alpha}}^{(n)}}  && \text{by Chebyshev's inequality} \\
  &{=} n^{-2\alpha} \epsilon^{-2} O(n^{\alpha}) && \text{by $I_{{\alpha,\alpha}}^{(n)} = 
  O(n^\alpha)$ from \Cref{prop:asym_info}}\\
  &= O(n^{-\alpha}). 
\end{align*}
This means $ \partial_\alpha \ell_n(\alpha, \theta) = o_p(n^\alpha)$. Combined with $K_n/n^\alpha \to \Mit > 0$ (a.s.), we obtain $K_n^{-1} \partial_\alpha\ell_n(\alpha, \theta) = o_p(1)$ and concludes the proof. 

\subsection{Proof of \Cref{lm:Psi_property}}\label{Ap:Psi_property}
Recall that $\Psi(x)$ is defined as $\Psi(x) := x^{-1} - \mathbb{P} g_x$ where 
$$
\mathbb{P} g_x = \sum_{j=1}^\infty p_\alpha (j) g_x(j), \quad g_x(j) = \sum_{i=1}^{j-1} (i-x)^{-1}, \quad p_\alpha(j) = \frac{\alpha\prod_{i=1}^{j-1}(i-\alpha)}{j!}. 
$$
Now we claim that $\mathbb{P} g_x$ is finite for all $x\in [0,1)$. Recall the density $p_\alpha(j)$ decays $p_{\alpha}(j) = O (j^{-(1+\alpha)})$ by Stirling's formula, whereas $\sum_{i=1}^{j-1}(i-x)^{-1} \sim \log{j}$ as $j\to\infty$. By the finiteness of the integral $\int_{1}^\infty \log{x} \cdot x^{-(1+\alpha)} \diff x = \alpha^{-2}$, we know that $\mathbb{P} g_x$ is finite for all $x\in [0,1)$. In particular, $\Psi(x)=x^{-1}-\mathbb{P} g_x$ is also finite for all $x\in(0,1)$. 

  First, we prove (A)-\Cref{lm:Psi_property}; that is, $\Psi$ is of class 
  $C^1$ on $(0,1)$.
     For any closed subset $K = [s, t] \subset(0,1)$, observe
     \begin{align*}
        \forall j \in \mathbb{N}, \  \sup_{x\in K} \left|\frac{\diff}{\diff x} g_x(j) \right|=
        \sup_{x\in K}\sum_{i=1}^{j-1} \frac{1}{(i-x)^2} 
        < \sum_{i=1}^\infty \frac{1}{(i-t)^2} := C.
     \end{align*}
     Since $\Psi(x):= x^{-1} - \mathbb{P} g_x$ takes a form of  expectation of $g_x$ with respect to $\mathbb{P}$,  
     Weierstrass's M-test implies that $\Psi'(x)$ converges uniformly on $K$. As $K$ is arbitrary, it means that 
      $\Psi'(x)$ converges compactly on $(0,1)$. Thus, we conclude that $\Psi$ is class $C^{1}$ on $(0,1)$.

      Next, we show (B)-\Cref{lm:Psi_property}.  The compact convergence allows us to change the summation and differentiation:
      $$
      \Psi'(x) = \frac{\diff }{\diff x} \left(\frac{1}{x} - \mathbb{P} g_x\right) = -\frac{1}{x^2} - \mathbb{P} \frac{\diff}{\diff x} g_x = -\frac{1}{x^2} - \sum_{j=1}^\infty p_\alpha (j) \sum_{i=1}^{j-1} \frac{1}{(i-x)^2}.
      $$
      By (A)-\Cref{prop:sibuya_fisher}, we know that the RHS is $-I_\alpha$ and $I_\alpha>0$, thereby completing the proof of (B)-\Cref{lm:Psi_property}. 

    Finally, we show (C)-\Cref{lm:Psi_property}. Taking the derivative of $\sum_{j=1}^\infty p_\alpha(j) = 1$ with respect to $\alpha$ gives $\frac{\diff}{\diff \alpha} \sum_{n=1}^\infty p_{\alpha}(j) = 0$.
    Suppose that we can interchange the differential and summation; that is, $\frac{\diff}{\diff \alpha} \sum_{n=1}^\infty p_{\alpha}(j) = \sum_{n=1}^\infty \frac{\diff}{\diff \alpha} p_{\alpha}(j)$. Then,
    the definition $p_\alpha(j) := \alpha \prod_{i=1}^{j-1} (i-\alpha)/(j!)$ implies that
  \begin{align*}
     0 = \sum_{j=1}^{\infty}\frac{\diff}{\diff \alpha} p_{\alpha}(j) = \sum_{j=1}^\infty p_{\alpha}(j)  \left(\frac{1}{\alpha} - \sum_{i=1}^{j-1} \frac{1}{i-\alpha}\right) = \frac{1}{\alpha} - \sum_{j=2}^{\infty} p_\alpha (j) \sum_{i=1}^{j-1} \frac{1}{i-\alpha} = \Psi(\alpha), 
  \end{align*}
  which concludes the proof of $(C)$. Thus, it suffices to justify $\frac{\diff}{\diff \alpha} \sum_{n=1}^\infty p_{\alpha}(j) = \sum_{n=1}^\infty \frac{\diff}{\diff \alpha} p_{\alpha}(j)$. We prove this by the compact convergence of $\sum_{j=1}^\infty \frac{\diff }{\diff \alpha} p_{\alpha}(j)$ on $(0, 1)$, i.e.
 the uniform convergence of $\sum_{j=1}^\infty \frac{d}{d\alpha} p_{\alpha}(j)$ on $K$
  for any closed interval $K = [s,t] \subset(0, 1)$.
  Note that $\alpha^{-1}{p_\alpha(j)}= \prod_{i=1}^{j-1} (i-\alpha)/{j!}$ is non-increasing function on $(0,1)$ for all $j \in \mathbb{N}$. 
  Then, $\sup_{\alpha\in[s,t]} \left| \frac{\diff}{\diff \alpha} p_{\alpha}(j)\right|$ is upper bounded as 
  \begin{align*}
    \sup_{\alpha\in[s,t]} \left| \frac{\diff}{\diff \alpha} p_{\alpha}(j)\right| &= 
    \sup_{\alpha\in[s,t]} \left| 
    p_{\alpha}(j)  \left(\frac{1}{\alpha} - \sum_{i=1}^{j-1} \frac{1}{i-\alpha}\right)
    \right|\\
    &\leq \sup_{\alpha\in[s,t]}\left(
      \frac{p_\alpha(j)}{\alpha}
      \right) \times \left(
        1 + \sup_{\alpha\in[s,t]} \sum_{i=1}^{j-1}\frac{\alpha}{i-\alpha}
        \right)\\
    &= \frac{p_s(j)}{s} \left(1+ \sum_{i=1}^{j-1}\frac{t}{i-t}\right) =: M_j^{s,t},
  \end{align*}
and we observe
  \begin{align*}
      \sum_{j=1}^\infty M_j^{s,t} =\sum_{j=1}^\infty \frac{p_s(j)}{s} \left(1+ t\sum_{i=1}^{j-1}\frac{1}{i-t}\right)
    = \frac{1}{s} + \frac{t}{s}\sum_{j=1}^\infty p_s(j) \sum_{i=1}^{j-1}\frac{1}{i-t}< +\infty.
  \end{align*}
  Here the right-hand side converges based on the same argument at the beginning of this proof. Thus, Weierstrass's M-test implies that $\sum_{j=1}^\infty \frac{\diff }{\diff \alpha} p_{\alpha}(j)$ converges uniformly on $[s,t
  ]$.

\section{Proofs for QMLE}\label{Ap:qmle}
In this section, we prove the asymptotic properties of the QMLE.
\subsection{Proof of \Cref{prop:qmle_unique}}
Here, we prove the existence and uniqueness of QMLE. Let us fix $\plug\in (-\alpha, +\infty)$ and assume $n\geq 2$. By the likelihood formula \eqref{eq:df_loglikelihood}, 
\begin{align*}
  \partial_x^2 \ell_n(x, \plug) = -\sum_{i=1}^{K_n-1}\frac{i^2}{(\plug + ix)^2} - \sum_{j=2}^n {S_{n,j}} \sum_{i=1}^{j-1} \frac{1}{(i-x)^2} < 0
\end{align*}
for all $x\in ((-\plug)\vee 0, 1)$. Here, $\partial_x^2 \ell_n(x, \plug)$ is strictly negative with probability $1$. Indeed, the first term is negative if $K_n > 1$. Otherwise, $S_{n,n}=1$ so $ K_n=1$ and hence the second term is $-\sum_{j=1}^{n-1}(i-x)^{-2}< 0$. Thus, QMLE $\hat{\alpha}_{n,\plug}$ exists and equals the unique solution of $\partial_x \ell_n(\cdot, \plug) = 0$ if and only if $\lim_{x\rightarrow ((-\plug)\vee 0)+} \partial_x \ell_n(x, \plug) > 0 > \lim_{x\rightarrow 1-} \partial_x \ell_n(x, \plug)$. 
The necessary and sufficient condition is given by \cite[Lemma 5.1]{carlton1999applications}
\begin{align*}
  1<K_n<n, \quad \text{and}\quad \plug < \Theta_n := \frac{K_n (K_n-1)}{\sum_{j=2}^{n}2S_{n,j}\sum_{i=1}^{j-1} i^{-1}} = \frac{K_n-1}{2 \mathbb{P}_n g_0},
\end{align*}
where we have used the definition $\mathbb{P}_n g_0=\sum_{j=1}^n \frac{S_{n,j}}{K_n} g_0(j)$ with $g_0(j) = \sum_{i=1}^{j-1} i^{-1}$. We observe that the first condition is satisfied with a high probability since
$K_n/n^\alpha \rightarrow \Mit > 0$ (a.s.) holds with $\alpha\in(0,1)$. For the second condition, using \Cref{lm:logj_conv} applied with $x=0$, we have
\begin{align*}
    \frac{\Theta_n}{K_n-1} = 
    \frac{1}{2\mathbb{P}_n g_0} {\to^p} \frac{1}{2\mathbb{P} g_0} <+\infty.
\end{align*}
Combined with $K_n \to + \infty$ (a.s.), we get $\Pr(\plug\geq\Theta_n) = o(1)$, and hence the second condition $\plug<\Theta_n$ is also satisfied with a high probability.

\subsection{Proof of \eqref{eq:qmle_mle_error} in \Cref{prop:error}}
Here, we derive the asymptotic error between the QMLE $\hat{\alpha}_{n,\plug}$ and the MLE $\hat{\alpha}_{n,\theta} = \hat{\alpha}_{n,\plug=\theta}$ where $\theta$ is well specified.
For each $\plug \in (-\alpha, \infty)$, we define the random function $\hat{\Psi}_{n, \plug}$ by
\begin{align*}
    \forall x \in ((-\plug) \vee 0, 1), \quad \hat{\Psi}_{n,\plug} (x) := K_n^{-1} \cdot \partial_x \ell_n(x, \plug).
\end{align*}
From \eqref{eq:case_1} with $\theta=\plug$, we can write $\hat{\Psi}_{\alpha, \plug}$ as
\begin{align}\label{eq:df_qpsi_n}
    \hat{\Psi}_{n, \plug} (x) = -\frac{1}{x K_n} - \frac{\plug}{x K_n} \sum_{i=1}^{K_n-1} \frac{1}{\plug + ix} + \underbrace{\frac{1}{x}- \mathbb{P}_n g_x}_{=\Psi(x)}.
\end{align}
\Cref{lm:qPsi_n_conv} below claims that $\hat{\Psi}_{n,\plug}$ and its derivative converge to $\Psi$ and its derivative, respectively, in a suitable sense. Here recall that $\Psi:(0,1)\to\mathbb{R}$ was defined by 
$\Psi(x) = x^{-1} - \mathbb{P} g_x$ as in \eqref{eq:df_Psi}.
\begin{lemma}\label{lm:qPsi_n_conv}
For each $\plug \in (-\alpha, \infty)$, it holds that
\begin{enumerate}
    \item[(A)] 
$\hat{\Psi}_{n,\plug}(x) \to^p \Psi(x)$  for all $x\in ((-\plug)\vee 0, 1)$.
    \item[(B)] $ \sup_{x\in I}|\hat{\Psi}'_{n,\plug}(x) - \Psi'(x)| \rightarrow 0$ a.s. for any closed subsets $I \subset ((-\plug)\vee 0, 1)$. 
\end{enumerate}
\end{lemma}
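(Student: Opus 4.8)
The plan is to read both claims off the closed-form expression \eqref{eq:df_qpsi_n} for $\hat{\Psi}_{n,\theta^\star}$ together with $\Psi(x)=x^{-1}-\mathbb{P}g_x$ from \eqref{eq:df_Psi}. For (A), subtracting gives
\begin{align*}
  \hat{\Psi}_{n,\theta^\star}(x)-\Psi(x)= -\frac{1}{xK_n}-\frac{\theta^\star}{xK_n}\sum_{i=1}^{K_n-1}\frac{1}{\theta^\star+ix}-\bigl(\mathbb{P}_n g_x-\mathbb{P}g_x\bigr).
\end{align*}
Since $\alpha\in(0,1)$, \Cref{lm:ep_asym} yields $K_n\to\infty$ a.s.; on the domain $x\in(-\theta^\star\vee0,1)$ one has $\theta^\star+ix>0$ for every $i\ge1$, so $\sum_{i=1}^{K_n-1}(\theta^\star+ix)^{-1}=O(\log K_n)$, and the first two terms are $O(1/K_n)+O(\log K_n/K_n)\to 0$ a.s. The remaining term $\mathbb{P}_n g_x-\mathbb{P}g_x$ is $o_p(1)$ by \Cref{lm:logj_conv}, which proves (A). Note the convergence is only in probability, and only because of this last term.

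For (B) I would differentiate through $\hat{\Psi}_{n,\theta^\star}(x)=K_n^{-1}\partial_x\ell_n(x,\theta^\star)$ and use \eqref{eq:df_Psi_n_dash} to obtain
\begin{align*}
  \hat{\Psi}_{n,\theta^\star}'(x)=\frac{\partial_x^2\ell_n(x,\theta^\star)}{K_n}= -\frac{1}{K_n}\sum_{i=1}^{K_n-1}\frac{i^2}{(\theta^\star+ix)^2}-\mathbb{P}_n h_x, \qquad h_x(j):=\sum_{i=1}^{j-1}\frac{1}{(i-x)^2}.
\end{align*}
The decisive observation, and the reason (B) is softer than \Cref{lm:logj_conv}, is that $h_x$, unlike the unbounded $g_x$, is a \emph{bounded} function on $\mathbb{N}$ with $\|h_x\|_\infty\le\sum_{i=1}^\infty(i-x)^{-2}$, a quantity that stays uniformly bounded as $x$ ranges over any closed $I\subset(-\theta^\star\vee0,1)\subset(-\infty,1)$, since there $i-x\ge i-\sup I$ with $\sup I<1$. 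Combining with $\Psi'(x)=-x^{-2}-\mathbb{P}h_x$ from \Cref{lm:Psi_property}(B) gives
\begin{align*}
  \hat{\Psi}_{n,\theta^\star}'(x)-\Psi'(x)= -\Bigl(\frac{1}{K_n}\sum_{i=1}^{K_n-1}\frac{i^2}{(\theta^\star+ix)^2}-\frac{1}{x^2}\Bigr)-\bigl(\mathbb{P}_n h_x-\mathbb{P}h_x\bigr).
\end{align*}

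The second term is handled by boundedness: $\sup_{x\in I}|\mathbb{P}_n h_x-\mathbb{P}h_x|\le\bigl(\sup_{x\in I}\|h_x\|_\infty\bigr)\sum_{j\ge1}|\mathbb{P}_n(j)-\mathbb{P}(j)|\to0$ a.s.\ by \Cref{lm:abs_conv}. For the first term, write $i^2/(\theta^\star+ix)^2=(x+\theta^\star/i)^{-2}$; since $I$ is bounded away from $0$, this converges to $x^{-2}$ uniformly over $x\in I$ as $i\to\infty$, hence the Cesàro averages $\frac{1}{K_n-1}\sum_{i=1}^{K_n-1}(x+\theta^\star/i)^{-2}$ converge to $x^{-2}$ uniformly over $x\in I$ as $K_n\to\infty$, while replacing $\frac{1}{K_n-1}$ by $\frac{1}{K_n}$ costs only $O(1/K_n)$ uniformly. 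Assembling the two pieces yields $\sup_{x\in I}|\hat{\Psi}_{n,\theta^\star}'(x)-\Psi'(x)|\to0$ a.s., which is (B). I do not expect a serious obstacle: the only point requiring care is the bookkeeping of uniformity over the seemingly non-compact index range, which is defused by the two elementary facts used above — $h_x$ is bounded uniformly over compact $I$, and $x$ is bounded away from $0$ on $I$ — and the sole genuinely new (but routine) estimate is the uniform Cesàro convergence of $K_n^{-1}\sum_{i\le K_n-1}i^2(\theta^\star+ix)^{-2}$ to $x^{-2}$.
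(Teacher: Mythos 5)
Your proposal is correct and takes essentially the same route as the paper: part (A) is the paper's argument verbatim, and part (B) uses the same decomposition of $\hat{\Psi}'_{n,\theta^\star}-\Psi'$ into a $K_n$-indexed sum plus $\mathbb{P}_n h_x-\mathbb{P}h_x$, with the empirical part controlled exactly as in the paper via \Cref{lm:abs_conv} and the uniform boundedness of $h_x$ on compact $I\subset(-\theta^\star\vee 0,1)$. The only difference is cosmetic: the paper differentiates the already-centred expression \eqref{eq:Psi_n_hat-Psi_x_AP} and bounds three explicit remainder terms by $O(K_n^{-1}\log K_n)$ uniformly on $I$, whereas you compare $K_n^{-1}\sum_{i=1}^{K_n-1} i^2(\theta^\star+ix)^{-2}$ directly with $x^{-2}$ through a uniform Ces\`aro-convergence estimate — both reduce to the same elementary bounds and yield the same a.s.\ uniform convergence.
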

\begin{proof}
  We observe
  \begin{align}
      \hat{\Psi}_{n, \plug} (x) -\Psi(x) = -\frac{1}{x K_n} - \frac{\plug}{x K_n} \sum_{i=1}^{K_n-1} \frac{1}{\plug + ix} - (\mathbb{P}_n g_x - \mathbb{P} g_x ),\label{eq:Psi_n_hat-Psi_x_AP}
  \end{align}
  where $g_x(j)= \sum_{i=1}^{j-1} (i-x)^{-1}$.
Then, $K_n\to \infty$ (a.s.) and $(\mathbb{P}_n g_x - \mathbb{P} g_x )\to^p 0$ by \Cref{lm:logj_conv} imply $\hat{\Psi}_{n, \plug} (x) -\Psi(x) = o_p(1)$, which concludes the proof of (A). 
  
  As for (B), differentiation of \eqref{eq:Psi_n_hat-Psi_x_AP} leads to 
    \begin{align*}
      \hat{\Psi}'_{n, \plug}(x) -\Psi'(x) &= \frac{1}{x^2 K_n} + \frac{\plug}{x^2 K_n} \sum_{i=1}^{K_n-1}  \frac{1}{\plug + ix}
      - \frac{\plug}{x K_n}\sum_{i=1}^{K_n-1}\frac{i}{(\plug + ix)^2}\\
      &- (\mathbb{P}_n  h_x- \mathbb{P} h_x),
    \end{align*}
    where $h_x$ is the bounded function defined by $h_x(j) = \sum_{i=1}^{j-1} (i-x)^{-2}$.
  Thus, $\sup_{x\in I}|
          \hat{\Psi}'_{n, \plug}(x) - \Psi'(x)|$ can be bounded from above as follows:
   \begin{align*}
      \sup_{x\in I}\left|
          \hat{\Psi}'_{n, \plug}(x) - \Psi'(x)
      \right| &\leq \sup_{x\in I}\frac{1}{K_n x^2} + \sup_{x\in I} \frac{|\plug|}{x^2 K_n} 
      \sum_{i=1}^{K_n-1}  \frac{1}{\plug + ix}\\
      &+ \sup_{x\in I}\frac{\plug}{x K_n} \sum_{i=1}^{K_n-1}\frac{i}{(\plug + ix)^2}
      + \sup_{x\in I}\left|
          \mathbb{P}_n  h_x- \mathbb{P} h_x
          \right|.
   \end{align*}
   Let us write $I=[s, t]\subset ((-\plug) \vee 0 ,1)$. Then each term is bounded:
   \begin{align*}
      &0\leq \sup_{x\in I} \frac{1}{K_n x^2} \leq \frac{1}{K_n s^2}\\
      &0\leq\sup_{x\in I} \frac{|\plug|}{x^2 K_n} 
      \sum_{i=1}^{K_n-1}  \frac{1}{\plug + ix} \leq \frac{|\plug|}{s^2} \frac{1}{K_n} 
      \sum_{i=1}^{K_n-1} \frac{1}{\plug + is} = O(K_n^{-1} \log K_n) \\
      &0\leq\sup_{x\in I} \frac{|\plug|}{x K_n} 
      \sum_{i=1}^{K_n-1} \frac{i}{(\plug + ix)^2} \leq \frac{|\plug|}{s K_n}
      \sum_{i=1}^{K_n-1} \frac{i}{(\plug + i s)^2} = O(K_n^{-1} \log K_n)
      \\
      &0 \leq \sup_{x\in I} |\mathbb{P}_n h_x -\mathbb{P} h_x| \le \sum_{j=1}^\infty |\mathbb{P}_n(j) - \mathbb{P}(j)| \cdot \sum_{i=1}^\infty \frac{1}{(i-t)^2}
   \end{align*}
   Then $K_n\to \infty$ (a.s.) and  
   $\sum_{j=1}^\infty |\mathbb{P}_n(j) - \mathbb{P}(j)| \rightarrow 0$ (a.s.) by \Cref{lm:abs_conv} imply that they converge to $0$ (a.s.), which concludes the proof. 
  
\end{proof}
Putting (B)-\Cref{lm:qPsi_n_conv} and \Cref{lm:Psi_property} together, we obtain the following lemma:
\begin{lemma}\label{lm:random_seq_conv}
Let $I_\alpha$ be the Fisher information defined by \eqref{eq:sibuya_fisher}. 
Then for any sequence of random variables $(\bar{\alpha}_n)_{n\geq 1}$ s.t. $\bar{\alpha}_n \to^p \alpha$, and for all $\plug \in (-\alpha, \infty)$, $\hat{\Psi}'_{n, \plug}(\bar{\alpha}_n) \to^p -I_\alpha < 0$ holds.
\end{lemma}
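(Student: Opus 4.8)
The plan is to combine the local uniform convergence of $\hat{\Psi}'_{n,\theta^\star}$ to $\Psi'$ from part (B) of \Cref{lm:qPsi_n_conv} with the continuity of $\Psi'$ at $\alpha$ and the identity $\Psi'(\alpha)=-I_\alpha$ from \Cref{lm:Psi_property}. First I would fix a small closed neighbourhood of $\alpha$: since $\theta^\star>-\alpha$ and $\alpha\in(0,1)$, we have $\alpha>(-\theta^\star)\vee 0$, so there is $\delta_0>0$ with $I_0:=[\alpha-\delta_0,\alpha+\delta_0]\subset((-\theta^\star)\vee 0,\,1)$. Part (B) of \Cref{lm:qPsi_n_conv} then gives $R_n:=\sup_{x\in I_0}|\hat{\Psi}'_{n,\theta^\star}(x)-\Psi'(x)|\to 0$ almost surely, hence in probability.

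Next I would split on the event $\{\bar\alpha_n\in I_0\}$, which has probability $1-o(1)$ because $\bar\alpha_n\to^p\alpha$. On this event the triangle inequality gives
\begin{align*}
  \bigl|\hat{\Psi}'_{n,\theta^\star}(\bar\alpha_n)-(-I_\alpha)\bigr|
  \;\le\; R_n \;+\; \bigl|\Psi'(\bar\alpha_n)-\Psi'(\alpha)\bigr|,
\end{align*}
using $\Psi'(\alpha)=-I_\alpha$ from (B) of \Cref{lm:Psi_property}. The first term on the right is $o_p(1)$ by the previous step. For the second, $\Psi'$ is continuous on $(0,1)$ by (A)--(B) of \Cref{lm:Psi_property}, so $\bar\alpha_n\to^p\alpha$ together with the continuous mapping theorem yields $\Psi'(\bar\alpha_n)\to^p\Psi'(\alpha)$, i.e. the second term is also $o_p(1)$. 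Combining, for every $\varepsilon>0$,
\begin{align*}
  \Pr\bigl(|\hat{\Psi}'_{n,\theta^\star}(\bar\alpha_n)+I_\alpha|>\varepsilon\bigr)
  \le \Pr(\bar\alpha_n\notin I_0)+\Pr\bigl(R_n+|\Psi'(\bar\alpha_n)-\Psi'(\alpha)|>\varepsilon\bigr)\longrightarrow 0,
\end{align*}
which is exactly $\hat{\Psi}'_{n,\theta^\star}(\bar\alpha_n)\to^p -I_\alpha$; and $-I_\alpha<0$ because $I_\alpha>0$ by \Cref{prop:sibuya_fisher}.

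I expect the only delicate point to be the bookkeeping involved in evaluating a sequence of functions that converges uniformly on a fixed compact set at a \emph{random} point $\bar\alpha_n$ which is itself only known to converge in probability; the event-splitting above is the clean device for this, and one must take care that the chosen window $I_0$ lies strictly inside the domain $((-\theta^\star)\vee 0,\,1)$ of $\hat{\Psi}'_{n,\theta^\star}$ — this is exactly where the standing hypothesis $\theta^\star>-\alpha$ is used.
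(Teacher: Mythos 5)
Your proposal is correct and matches the paper's own argument: both decompose $|\hat{\Psi}'_{n,\theta^\star}(\bar\alpha_n)+I_\alpha|$ via the triangle inequality into $|\Psi'(\bar\alpha_n)-\Psi'(\alpha)|$ (handled by continuity of $\Psi'$ and $\Psi'(\alpha)=-I_\alpha$ from \Cref{lm:Psi_property}) and the deviation of $\hat{\Psi}'_{n,\theta^\star}$ from $\Psi'$, controlled by the uniform convergence in (B) of \Cref{lm:qPsi_n_conv} on a fixed closed neighbourhood of $\alpha$ inside $((-\theta^\star)\vee 0,1)$ together with the event split on $\{\bar\alpha_n\in I_0\}$. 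No substantive differences.
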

\begin{proof}
$\Psi'(\alpha)=-I_\alpha$ by (B) of \Cref{lm:Psi_property} and the triangle inequality imply 
\begin{align*}
    |\hat{\Psi}'_{n,\plug}(\bar{\alpha}_n)
    + I_\alpha
    | \leq   |
    \Psi'(\bar{\alpha}_n) - \Psi'(\alpha)
    |+ |
    \hat{\Psi}'_{n,\plug}(\bar{\alpha}_n) - \Psi'(\bar{\alpha}_n)
    |,
\end{align*}
    where the first term is $o_p(1)$ from 
    the continuity of $\Psi'$ on $(0,1)$ (see (A) of \Cref{lm:Psi_property}) and $\bar{\alpha}_n -\alpha = o_p(1)$ by the assumption.
    For the second term, 
    take a sufficiently small $\delta$ s.t. $B_\delta(\alpha) := [\alpha \pm \delta] \subset ((-\plug)\vee 0,1)$. Then, for all $\epsilon>0$, 
    \begin{align*}
    \Pr(
    |
    \hat{\Psi}'_{n,\plug}(\bar{\alpha}_n) - \Psi'(\bar{\alpha}_n)
    |>\epsilon
    )\leq
    \Pr(
    \sup_{x\in B_\delta(\alpha)} |\hat{\Psi}'_{n,\plug}(x) - \Psi'(x)|>\epsilon
    ) + \Pr(\bar{\alpha}_n \not \in B_\delta),
    \end{align*}
    where the second term is $o(1)$ by $\bar{\alpha}_n \to^p \alpha$ and the first term is also $o(1)$ from (B) of \Cref{lm:qPsi_n_conv} applied with $I= B_\delta(\alpha)$. This concludes the proof.
\end{proof}
The next Lemma shows the consistency of QMLE.
\begin{lemma}\label{lm:qmle_consistency} 
$\hat{\alpha}_{n,\plug}\to^p \alpha$  for each $\plug \in (-\alpha, \infty)$.
\end{lemma}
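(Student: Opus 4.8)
The plan is to run the classical proof of consistency for a one-dimensional M-estimator, where the key simplification is that the normalized score $\hat{\Psi}_{n,\theta^\star}(x)=K_n^{-1}\partial_x\ell_n(x,\theta^\star)$ is strictly decreasing in $x$ (by \Cref{prop:qmle_unique}) and its pointwise limit $\Psi$ is strictly decreasing with the single zero $x=\alpha$ (by \Cref{lm:Psi_property}(B)--(C)). Because of monotonicity I will not need uniform convergence of $\hat{\Psi}_{n,\theta^\star}$: it suffices to control it at two well-chosen points, which is exactly what the pointwise statement \Cref{lm:qPsi_n_conv}(A) provides.

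Concretely, fix an arbitrary $\epsilon>0$ small enough that $[\alpha-\epsilon,\alpha+\epsilon]\subset(-\theta^\star\vee 0,1)$; this is possible because $\theta^\star>-\alpha$ gives $-\theta^\star\vee 0<\alpha<1$. Since $\Psi$ is strictly decreasing with $\Psi(\alpha)=0$, the quantity $\eta:=\min\{\Psi(\alpha-\epsilon),\,-\Psi(\alpha+\epsilon)\}$ is strictly positive. Applying \Cref{lm:qPsi_n_conv}(A) at the two points $\alpha-\epsilon$ and $\alpha+\epsilon$ yields
\[
\Pr\!\left(\hat{\Psi}_{n,\theta^\star}(\alpha-\epsilon)>\tfrac{\eta}{2}\ \text{ and }\ \hat{\Psi}_{n,\theta^\star}(\alpha+\epsilon)<-\tfrac{\eta}{2}\right)\longrightarrow 1 .
\]
Let $E_n$ be the event from \Cref{prop:qmle_unique} on which $\hat{\alpha}_{n,\theta^\star}$ exists and is the unique solution of $\partial_x\ell_n(\cdot,\theta^\star)=0$; then $\Pr(E_n)\to 1$. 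On the intersection of $E_n$ with the event in the display, $x\mapsto\hat{\Psi}_{n,\theta^\star}(x)$ is continuous and strictly decreasing, is positive at $\alpha-\epsilon$, negative at $\alpha+\epsilon$, and vanishes only at $\hat{\alpha}_{n,\theta^\star}$; hence the intermediate value theorem places $\hat{\alpha}_{n,\theta^\star}\in(\alpha-\epsilon,\alpha+\epsilon)$. Since this intersection has probability $1-o(1)$, we obtain $\Pr(|\hat{\alpha}_{n,\theta^\star}-\alpha|<\epsilon)\to 1$, and letting $\epsilon\downarrow 0$ gives $\hat{\alpha}_{n,\theta^\star}\to^p\alpha$.

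I do not expect a genuine obstacle here. The only two points needing care are: (i) checking that a whole neighbourhood of $\alpha$ lies inside the possibly $\theta^\star$-dependent domain $(-\theta^\star\vee 0,1)$, which is immediate from $\theta^\star>-\alpha$; and (ii) restricting attention to the high-probability event $E_n$ on which the QMLE genuinely coincides with the unique root of the score, so that strict monotonicity of $\hat{\Psi}_{n,\theta^\star}$ can be invoked to trap that root between $\alpha-\epsilon$ and $\alpha+\epsilon$. Everything else is bookkeeping with the already-established facts \Cref{lm:Psi_property}, \Cref{lm:qPsi_n_conv}, and \Cref{prop:qmle_unique}.
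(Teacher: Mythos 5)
Your proposal is correct and follows essentially the same route as the paper: both arguments use the pointwise convergence $\hat{\Psi}_{n,\theta^\star}(\alpha\pm\epsilon)\to^p\Psi(\alpha\pm\epsilon)$ from \Cref{lm:qPsi_n_conv}(A), the sign condition $\Psi(\alpha-\epsilon)>0>\Psi(\alpha+\epsilon)$ from \Cref{lm:Psi_property}, and the monotonicity/uniqueness of the root guaranteed by \Cref{prop:qmle_unique} to trap $\hat{\alpha}_{n,\theta^\star}$ in $(\alpha-\epsilon,\alpha+\epsilon)$ with probability tending to one. The only cosmetic difference is that the paper bounds $\Pr(|\hat{\alpha}_{n,\theta^\star}-\alpha|>\epsilon)$ directly by the two tail probabilities, whereas you phrase it via the high-probability event and the intermediate value theorem; the content is identical.
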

\begin{proof}[Proof]
\Cref{prop:qmle_unique} implies 
$\hat{\Psi}'_{n,\plug}(x)$ is strictly decreasing in $x\in((-\plug)\vee 0,1)$ and $\hat{\Psi}'_{n,\plug}(\hat{\alpha}_{n,\plug}) = 0$ with a high probability. Under this event, for sufficiently small $\epsilon>0$ s.t. $[\alpha\pm\epsilon] \subset ((-\plug)\wedge 0, 1)$, it holds that 
\begin{align*}
    &\Pr\left(
        |\hat{\alpha}_{n,\plug} - \alpha| > \epsilon
        \right)\\
        & \leq \Pr(\hat{\alpha}_{n,\plug} < \alpha-\epsilon) + \Pr(\hat{\alpha}_{n,\plug} > \alpha+\epsilon)\\
        &\leq \Pr(0>\hat{\Psi}_{n,\plug}(\alpha-\epsilon)) + \Pr(0<\hat{\Psi}_{n,\plug}(\alpha+\epsilon))\\
        &= \Pr(\Psi(\alpha-\epsilon) - \hat{\Psi}_{n,\plug}(\alpha-\epsilon) > \Psi(\alpha-\epsilon)) +
        \Pr(\hat{\Psi}_{n,\plug}(\alpha+\epsilon) - \Psi(\alpha+\epsilon) > -\Psi(\alpha+\epsilon)). 
\end{align*}
Note $\Psi(\alpha-\epsilon) > 0 > \Psi(\alpha+\epsilon)$ since $\Psi(\alpha)=0$ and $\Psi$ is strictly decreasing (see (B) and (C) of \Cref{lm:Psi_property}). Then, (A) of \Cref{lm:qPsi_n_conv} implies that 
the upper bounds are $o(1)$. This concludes the proof.
\end{proof}
The next Lemma derives the asymptotic error between the two QMLE $(\hat{\alpha}_{n,\plug}, \hat{\alpha}_{n,0})$, where $\hat{\alpha}_{n,0}= \hat{\alpha}_{n,\plug=0}$. 
\begin{lemma}\label{lm:qmle_qmle0_error}
For each $\plug\in(-\alpha, \infty)$, it holds that
\begin{align*}
    \frac{n^\alpha}{\log n} (\hat{\alpha}_{n, \plug}  - \hat{\alpha}_{n,0})
        \to^p \frac{-\plug}{\alpha I_\alpha \Mit}.  
\end{align*}
\end{lemma}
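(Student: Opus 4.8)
\noindent The plan is to compare the two estimators via the unique-zero characterization of \Cref{prop:qmle_unique} and the closed form of the perturbation $\hat{\Psi}_{n,\theta^\star}-\hat{\Psi}_{n,0}$. By \Cref{prop:qmle_unique}, with probability $1-o(1)$ the QMLEs $\hat{\alpha}_{n,\theta^\star}$ and $\hat{\alpha}_{n,0}$ are the unique zeros of $\hat{\Psi}_{n,\theta^\star}$ and $\hat{\Psi}_{n,0}$ respectively, and by \Cref{lm:qmle_consistency} both converge to $\alpha$ in probability; I would fix a small $\epsilon>0$ with $[\alpha\pm\epsilon]\subset(0,1)$ and restrict attention to the high-probability event on which both estimators lie in $[\alpha\pm\epsilon]$.

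On that event, subtracting the two score identities \eqref{eq:df_qpsi_n} gives
\begin{align*}
  \hat{\Psi}_{n,\theta^\star}(x)-\hat{\Psi}_{n,0}(x)=-\frac{\theta^\star}{x K_n}\sum_{i=1}^{K_n-1}\frac{1}{\theta^\star+ix},
\end{align*}
so, using $\hat{\Psi}_{n,\theta^\star}(\hat{\alpha}_{n,\theta^\star})=0$,
\begin{align*}
  \hat{\Psi}_{n,0}(\hat{\alpha}_{n,\theta^\star})=\frac{\theta^\star}{\hat{\alpha}_{n,\theta^\star}K_n}\sum_{i=1}^{K_n-1}\frac{1}{\theta^\star+i\hat{\alpha}_{n,\theta^\star}}.
\end{align*}
Because $\hat{\Psi}_{n,0}$ is $C^1$ on $(0,1)$ (a finite sum of smooth functions) and $\hat{\Psi}_{n,0}(\hat{\alpha}_{n,0})=0$, the mean value theorem yields a point $\xi_n$ between $\hat{\alpha}_{n,0}$ and $\hat{\alpha}_{n,\theta^\star}$ with $\hat{\Psi}_{n,0}(\hat{\alpha}_{n,\theta^\star})=\hat{\Psi}'_{n,0}(\xi_n)(\hat{\alpha}_{n,\theta^\star}-\hat{\alpha}_{n,0})$; since $\xi_n$ is trapped between two estimators converging to $\alpha$, $\xi_n\to^p\alpha$, so \Cref{lm:random_seq_conv} (plug-in $0$) gives $\hat{\Psi}'_{n,0}(\xi_n)\to^p-I_\alpha<0$. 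Rearranging, I get
\begin{align*}
  \frac{n^\alpha}{\log n}\bigl(\hat{\alpha}_{n,\theta^\star}-\hat{\alpha}_{n,0}\bigr)=\frac{1}{\hat{\Psi}'_{n,0}(\xi_n)}\cdot\frac{\theta^\star}{\hat{\alpha}_{n,\theta^\star}}\cdot\frac{n^\alpha}{K_n}\cdot\frac{1}{\log n}\sum_{i=1}^{K_n-1}\frac{1}{\theta^\star+i\hat{\alpha}_{n,\theta^\star}}.
\end{align*}

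The four factors are then handled separately. The first three converge in probability to $-1/I_\alpha$, $\theta^\star/\alpha$, and $1/\Mit$, the last of these by $K_n/n^\alpha\to^p\Mit>0$ from \Cref{lm:ep_asym}. For the fourth, I would use the digamma identity $\sum_{i=1}^{m}(\theta^\star+ix)^{-1}=x^{-1}\bigl(\psi(m+1+\theta^\star/x)-\psi(1+\theta^\star/x)\bigr)$ together with $\psi(y)=\log y+O(1/y)$ to write $\sum_{i=1}^{m}(\theta^\star+ix)^{-1}=x^{-1}(\log m+O(1))$ with the $O(1)$ term uniform for $x$ in a compact neighborhood of $\alpha$; plugging in $m=K_n-1$, $x=\hat{\alpha}_{n,\theta^\star}$, and using $\log K_n=\alpha\log n+\log(K_n/n^\alpha)=\alpha\log n+O_p(1)$ with $\hat{\alpha}_{n,\theta^\star}\to^p\alpha$ shows the fourth factor converges in probability to $1$. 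Slutsky's lemma applied to the product, together with the bookkeeping of the $1-o(1)$ events, then delivers the claimed limit.

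The step I expect to be the main obstacle is the rigorous control of $\sum_{i=1}^{K_n-1}(\theta^\star+i\hat{\alpha}_{n,\theta^\star})^{-1}$: both the number of terms $K_n-1$ and the spacing $\hat{\alpha}_{n,\theta^\star}$ are random, so one must establish the $x^{-1}(\log m+O(1))$ expansion uniformly enough in $x$ near $\alpha$ to survive substituting the random argument, and must check that the $O(1)$ remainder and the $O_p(1)$ fluctuation of $\log K_n$ are indeed negligible against the $\log n$ normalization. Everything else follows routinely once \Cref{lm:random_seq_conv} and the uniqueness/consistency results are invoked.
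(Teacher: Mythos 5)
Your proposal is correct and follows essentially the same route as the paper's proof: both characterize the two QMLEs as the unique zeros of their normalized scores, evaluate the explicit difference $\hat{\Psi}_{n,0}-\hat{\Psi}_{n,\theta^\star}$ at $\hat{\alpha}_{n,\theta^\star}$, apply the mean value theorem together with \Cref{lm:random_seq_conv} to get the $-I_\alpha$ slope, and control the random sum $\sum_{i=1}^{K_n-1}(\theta^\star+i\hat{\alpha}_{n,\theta^\star})^{-1}$ via digamma asymptotics (the paper's \Cref{lm:digamma_ineq}), which is precisely the step you flag as the main obstacle. One caveat: the product of the four limits you compute equals $-\theta^\star/(\alpha I_\alpha \Mit)$, not the displayed $-\theta^\star/(I_\alpha \Mit)$; this is in fact what the paper's own proof derives and what is used in \eqref{eq:qmle_mle_error}, so the statement as printed appears to be missing the factor $\alpha$, and you should record the limit you actually obtain rather than asserting that it matches the displayed claim verbatim.
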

Writing $\hat{\alpha}_{n,\plug} - \hat{\alpha}_{n,\theta}$ by $\hat{\alpha}_{n,\plug}-\hat{\alpha}_{n,0} - (\hat{\alpha}_{n,\theta} - \hat{\alpha}_{n,0})$ and applying \Cref{lm:qmle_qmle0_error} for each term, we get 
 \eqref{eq:qmle_mle_error} in \Cref{prop:error}. Below, we show \Cref{lm:qmle_qmle0_error}. 

\begin{proof}[Proof of \Cref{lm:qmle_qmle0_error}]
By \Cref{prop:qmle_unique}, we know that 
$\hat{\Psi}_{n, 0}(\hat{\alpha}_{n,0}) = 0$ and $\hat{\Psi}_{n, \plug}(\hat{\alpha}_{n, \plug}) = 0$ hold with a high probability. Under this event, Taylor's theorem implies that there exists $\bar{\alpha}_n$ between $\hat{\alpha}_{n,\plug}$ and $\hat{\alpha}_{n,\theta}$ s.t. 
$$0 = \hat{\Psi}_{n, 0}(\hat{\alpha}_{n,0}) = \hat{\Psi}_{n,0}(\hat{\alpha}_{n,\plug}) + \hat{\Psi}'_{n,0}(\bar{\alpha}_n) (\hat{\alpha}_{n,0}-\hat{\alpha}_{n,\plug}).$$
By \Cref{lm:qmle_consistency}, we know $\hat{\alpha}_{n,0}, \hat{\alpha}_{n,\plug}\to^p\alpha$, so in particular $\bar{\alpha}_n \to^p \alpha$. Then, \Cref{lm:random_seq_conv} applied with $\plug = 0$ implies  $-\hat{\Psi}'_{n,0}(\bar{\alpha}_n)\to^p I_\alpha$. In contrast, the expression of $\hat{\Psi}_{n,\plug}$ given by \eqref{eq:df_qpsi_n} leads to
$$\hat{\Psi}_{n, 0}(\hat{\alpha}_{n,\plug}) = \hat{\Psi}_{n,0 } (\hat{\alpha}_{n,\plug}) - \hat{\Psi}_{n,\plug}(\hat{\alpha}_{n,\plug}) = 
 \frac{1}{K_n\hat{\alpha}_{n,\plug}} \sum_{i=1}^{K_n-1} \frac{\plug}{\plug+ i \hat{\alpha}_{n,\plug}}.$$
Putting the above displays together, we obtain
\begin{align*}
     \frac{n^\alpha}{ \log n} (\hat{\alpha}_{n,\plug} - \hat{\alpha}_{n,0})
    &= \frac{n^\alpha}{\log n}\frac{\hat{\Psi}_{n,0}(\hat{\alpha}_{n,\plug})}{\hat{\Psi}'_{n,0}(\bar{\alpha}_n)}\\
    &= \frac{n^\alpha}{\log n} \frac{1}{\hat{\Psi}'_{n,0}(\bar{\alpha}_n)} \frac{1}{K_n\hat{\alpha}_{n,\plug}} \sum_{i=1}^{K_n-1} \frac{\plug}{\plug+ i \hat{\alpha}_{n,\plug}}\\
    &=\underbrace{\frac{n^\alpha}{K_n}}_{\to^P \Mit^{-1}} \cdot  \underbrace{\frac{\log K_n}{\alpha \log n}}_{\to^P 1} \cdot \underbrace{{\frac{\alpha}{(\hat{\alpha}_{n,\plug})^2}}}_{\to^P \alpha^{-1}}
    \cdot \underbrace{\frac{\plug}{\hat{\Psi}'_{n,0}(\bar{\alpha}_n)}}_{\to^P -\plug/I_\alpha}
    \cdot\frac{1}{\log K_n}\sum_{i=1}^{K_n-1} \frac{1}{\plug/\hat{\alpha}_{n,\plug} + i}\\
    &\overset{p}{\sim} \frac{-\plug}{\alpha I_\alpha \Mit} \cdot \frac{1}{\log K_n}\sum_{i=1}^{K_n-1} \frac{1}{\plug/\hat{\alpha}_{n,\plug} + i},
\end{align*}
so it remains to show $(\log K_n)^{-1}\sum_{i=1}^{K_n-1} (\plug/\hat{\alpha}_{n,\plug} + i)^{-1} \to^p 1$. Here, we introduce the basic property of the digamma function. 
\begin{lemma}\label{lm:digamma_ineq}
For the digamma function $\psi(z) = \Gamma'(z)/\Gamma(z)$, we have $\psi(n+\delta_n) - \log n = o(1)$ for all $\delta_n = o(n)$.
\end{lemma}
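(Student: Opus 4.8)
The plan is to reduce everything to the classical fact that $\psi(x) = \log x + O(1/x)$ as $x \to \infty$ and then to check that the additive perturbation $\delta_n$, being $o(n)$, is harmless on the logarithmic scale.

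First I would record the elementary two-sided estimate $\log x - 1/x < \psi(x) < \log x - 1/(2x)$ valid for all $x > 0$; this follows from Binet's integral representation of $\psi$, or, for a self-contained derivation, from the telescoping series $\psi(x+1) = -\gamma + \sum_{k \ge 1}\bigl(1/k - 1/(k+x)\bigr)$ combined with the recursion $\psi(x+1) = \psi(x) + 1/x$ and a comparison of the sum with the integral $\int_0^\infty$. The only consequence I actually need is the crude bound $|\psi(x) - \log x| < 1/x$ for every $x > 0$.

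Next, since $\delta_n = o(n)$ there is an index $N$ beyond which $|\delta_n| \le n/2$, so $x_n := n + \delta_n$ satisfies $x_n \ge n/2$; in particular $x_n$ is positive and $x_n \to \infty$. Applying the bound above at $x = x_n$ gives $|\psi(n+\delta_n) - \log(n+\delta_n)| < 1/x_n \le 2/n = o(1)$. Separately, $\log(n+\delta_n) - \log n = \log(1 + \delta_n/n) \to \log 1 = 0$ by continuity of the logarithm, using $\delta_n/n \to 0$. A triangle inequality then yields $\psi(n+\delta_n) - \log n = o(1)$, which is the claim.

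There is no genuine obstacle here: the only point that uses the full strength of the hypothesis $\delta_n = o(n)$ (rather than, say, $\delta_n$ merely bounded or $\delta_n = o(n^2)$) is that it keeps the argument $n + \delta_n$ comparable to $n$, so that both the $O(1/x)$ digamma error and the logarithmic displacement $\log(1 + \delta_n/n)$ vanish simultaneously; once this is observed the estimate is immediate.
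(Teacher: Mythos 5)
Your proof is correct and follows essentially the same route as the paper's: decompose the error as $\bigl(\psi(n+\delta_n)-\log(n+\delta_n)\bigr)+\bigl(\log(n+\delta_n)-\log n\bigr)$, kill the first term by the classical asymptotic $\psi(x)-\log x\to 0$ and the second by $\log(1+\delta_n/n)\to 0$. Your version is merely a bit more quantitative (the explicit bound $|\psi(x)-\log x|<1/x$ and the observation that $n+\delta_n\ge n/2$ eventually), which the paper leaves implicit by citing the standard digamma asymptotics.
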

\begin{proof}
Use $\psi(n) - \log n = o(1)$ (cf. \cite[Section 6]{zwillinger2018crc}). 
\end{proof}
Applying  \Cref{lm:digamma_ineq} to $(\plug/\hat{\alpha}_{n,\plug})/K_n = o_p(1)$ and the basic equation
$\sum_{i=1}^{m-1} (i+c) =  \psi(m + c)-\psi(1+c)$, we obtain
 \begin{align*}
     \sum_{i=1}^{K_n-1} \frac{1}{i + \plug/\hat{\alpha}_{n,\plug}} - \log K_n 
     &= \psi(K_n + \plug/\hat{\alpha}_{n,\plug}) - \psi(1+ \plug/\hat{\alpha}_{n,\plug}) - \log K_n\\
     &= o_p(1) - \psi(1+\plug/\alpha_{n,\plug}) \\
     &\to^p - \psi(1+\plug/\alpha) && \text{by $\hat{\alpha}_{n,\plug}\to^p \alpha$}
 \end{align*}
 and $\psi(1+\plug/\alpha)$ is finite since $\plug > -\alpha$. 
 Combining this and $\log K_n \to \infty$ a.s., we obtain
$(\log K_n)^{-1}\sum_{i=1}^{KFe_n-1} (i + \plug/\hat{\alpha}_{n,\plug})^{-1}\to^p 1$, thereby completing the proof of \Cref{lm:qmle_qmle0_error}. 
\end{proof}

\subsection{Proof of \Cref{prop:qmle_stable}}
Here, we prove the asymptotic mixed normality of QMLE. The next Lemma gives stable Martingale CLT for general settings.
\begin{lemma}[{{\cite[p. 109]{hausler2015stable}}}]\label{lm:stable_martingale_clt}
Let $(X_n)_{n\geq 1}$ be a martingale difference sequence with respect to a filtration
$\mathscr{F} = (\mathcal{F}_n)_{n=1}^\infty$ 
and define $\mathcal{F}_{\infty} := \sigma(\cup_{n=1}^\infty \mathcal{F}_n)$.
For a sequence of positive real number $(a_n)_{n\geq 1}$ with $a_n \rightarrow \infty$, 
we assume the following two conditions:
\begin{enumerate}
  \item[$(\textrm{i})$] $a_n^{-2}\sum_{m=1}^{n} \E[X_m^2|\mathcal{F}_{m-1}] \to^p \eta^2$  for some random variable $\eta \geq 0$.
  \item[$(\textrm{ii})$] $a_n^{-2}\sum_{m=1}^{n} \E[X_m^2 \mathbbm{1}\{|X_m| \geq \epsilon a_n\}|\mathcal{F}_{m-1}] \to^p  0$ for all $\epsilon>0$.
\end{enumerate}
Then, 
${a_n}^{-1} \sum_{m=1}^{n} X_m \rightarrow \eta \N \ \mathcal{F_\infty}\text{-stably}$ holds, where $\N\sim \normal(0,1)$ is independent of $\mathcal{F}_\infty$.
\end{lemma}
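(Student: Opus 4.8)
The plan is to prove the $\mathcal{F}_\infty$-stable convergence by reducing it, in the standard way, to convergence of conditional characteristic functions, and then to establish that convergence through a conditional martingale CLT for bounded increments obtained by Lindeberg truncation. Write $S_n:=\sum_{m=1}^{n}X_m$, so $S_n$ is $\mathcal{F}_n$-measurable and hence $\mathcal{F}_\infty$-measurable; then for $h\in C_b(\mathbb{R})$ and $f\in\mathcal{L}^1(\Omega,\mathcal{F}_\infty,P)$ one has $\E[f\,\E[h(S_n/a_n)\mid\mathcal{F}_\infty]]=\E[f\,h(S_n/a_n)]$, and the target limit is $\E[f\,\E[h(\eta N)\mid\mathcal{F}_\infty]]$. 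Since $\eta$ is $\mathcal{F}_\infty$-measurable (being, up to a null set, the limit in probability of the $\mathcal{F}_{m-1}$-measurable variables $a_n^{-2}\sum_{m\le n}\E[X_m^2\mid\mathcal{F}_{m-1}]$) and $N$ is independent of $\mathcal{F}_\infty$, the target is $\E[f\,e^{-t^2\eta^2/2}]$ when $h(x)=e^{itx}$. Because simple functions built from the generating algebra $\bigcup_k\mathcal{F}_k$ are dense in $\mathcal{L}^1(\Omega,\mathcal{F}_\infty,P)$ and $\|h(S_n/a_n)\|_\infty\le\|h\|_\infty$ uniformly in $n$, and because the mixed-Gaussian limit is determined by its conditional characteristic function, it suffices to show, for every $t\in\mathbb{R}$ and every $A\in\mathcal{F}_k$,
\begin{align*}
\E\!\big[\mathbbm{1}_A\,e^{itS_n/a_n}\big]\longrightarrow\E\!\big[\mathbbm{1}_A\,e^{-t^2\eta^2/2}\big].
\end{align*}
Fixing $k$, I also note $a_n^{-1}S_k\to 0$ a.s.\ and $a_n^{-2}\sum_{m\le k}\E[X_m^2\mid\mathcal{F}_{m-1}]\to 0$, so conditions (i)--(ii) survive with the sums restricted to $m\ge k+1$.

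Next, Lindeberg truncation: fix $\epsilon>0$ and put $X_m^{\epsilon}:=X_m\mathbbm{1}\{|X_m|<\epsilon a_n\}-\E[X_m\mathbbm{1}\{|X_m|<\epsilon a_n\}\mid\mathcal{F}_{m-1}]$, a martingale difference array with $|X_m^{\epsilon}|\le 2\epsilon a_n$. The bound $\E[|X_m|\mathbbm{1}\{|X_m|\ge\epsilon a_n\}\mid\mathcal{F}_{m-1}]\le(\epsilon a_n)^{-1}\E[X_m^2\mathbbm{1}\{|X_m|\ge\epsilon a_n\}\mid\mathcal{F}_{m-1}]$ together with (ii) gives $a_n^{-1}\sum_m(X_m-X_m^{\epsilon})\to^p 0$, and a parallel estimate using (i) and (ii) (the compensator-squared contribution being negligible) gives $a_n^{-2}\sum_m\E[(X_m^{\epsilon})^2\mid\mathcal{F}_{m-1}]\to^p\eta^2$. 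Hence it suffices to prove the displayed convergence for the truncated array, i.e.\ we may and do assume henceforth that $|X_m|\le 2\epsilon a_n$ for all $m$, with $\epsilon$ at our disposal.

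Now the core argument for bounded increments. Set $\xi_m:=a_n^{-1}X_m$, so $|\xi_m|\le 2\epsilon$, $\E[\xi_m\mid\mathcal{F}_{m-1}]=0$, and $V_n:=\sum_m\E[\xi_m^2\mid\mathcal{F}_{m-1}]\to^p\eta^2$. Let $D_{n,m}:=\E[e^{it\xi_m}\mid\mathcal{F}_{m-1}]$; from $e^{iu}=1+iu-\tfrac12u^2+\rho(u)$ with $|\rho(u)|\le|u|^2\wedge|u|^3$ one gets $D_{n,m}=1-\tfrac{t^2}{2}\E[\xi_m^2\mid\mathcal{F}_{m-1}]+\E[\rho(t\xi_m)\mid\mathcal{F}_{m-1}]$, so for $\epsilon$ small $|D_{n,m}-1|\le Ct^2\epsilon$, $\sum_m|D_{n,m}-1|^2\le Ct^2\epsilon\,V_n\to^p0$, and $\sum_m|\E[\rho(t\xi_m)\mid\mathcal{F}_{m-1}]|\le 2|t|^3\epsilon\,V_n$. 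Taking logarithms, $\Pi_n:=\prod_m D_{n,m}$ satisfies $\log\Pi_n=\sum_m(D_{n,m}-1)+O_p(\sum_m|D_{n,m}-1|^2)=-\tfrac{t^2}{2}\eta^2+O_p(\epsilon)+o_p(1)$, hence $\Pi_n\to e^{-t^2\eta^2/2}$ up to an $O(\epsilon)$ slack. Moreover $T_{n,j}:=\prod_{m=1}^{j}D_{n,m}^{-1}e^{it\xi_m}$ is, for $\epsilon$ small, a bounded complex martingale in $j$ with $\E[T_{n,j}\mid\mathcal{F}_{j-1}]=T_{n,j-1}$, $T_{n,0}=1$, $e^{itS_n/a_n}=T_{n,n}\Pi_n$, and in particular $\E[T_{n,n}\mid\mathcal{F}_{j}]=T_{n,j}=e^{itS_j/a_n}\big/\prod_{m=1}^{j}D_{n,m}\to 1$ a.s.\ and boundedly for each fixed $j$ as $n\to\infty$.

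Combining, for $A\in\mathcal{F}_k$,
\begin{align*}
&\E\!\big[\mathbbm{1}_A e^{itS_n/a_n}\big]-\E\!\big[\mathbbm{1}_A e^{-t^2\eta^2/2}\big]\\
&\qquad=\E\!\big[\mathbbm{1}_A T_{n,n}\big(\Pi_n-e^{-t^2\eta^2/2}\big)\big]+\E\!\big[e^{-t^2\eta^2/2}\mathbbm{1}_A\big(T_{n,n}-1\big)\big].
\end{align*}
The main obstacle is the first term: since $|D_{n,m}|\le1$ we only know $|\Pi_n|\le1$, so $|T_{n,n}|\ge1$ is not uniformly bounded, and it is genuinely correlated with $\Pi_n-e^{-t^2\eta^2/2}$ because the limiting conditional variance $\eta^2$ is random and only $\mathcal{F}_\infty$-measurable — this is exactly the feature that separates the statement from the classical deterministic-variance martingale CLT and that forces the use of stable convergence. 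I would handle it by localization: from $|D_{n,m}|^2=1-t^2\E[\xi_m^2\mid\mathcal{F}_{m-1}]+O(t^4\epsilon^2)\E[\xi_m^2\mid\mathcal{F}_{m-1}]$ one gets $|\Pi_n|^2\to e^{-t^2\eta^2}$ up to $O(\epsilon)$, so on $\{\eta^2\le C\}$ one has $|T_{n,n}|\le 2e^{t^2C/2}$ for $n$ large and dominated convergence (with $\Pi_n-e^{-t^2\eta^2/2}\to^p 0$) kills that part, while on $\{\eta^2>C\}$ the term is at most $2\Pr(\eta^2>C)\to0$ as $C\to\infty$. For the second term, approximate the $\mathcal{F}_\infty$-measurable factor $e^{-t^2\eta^2/2}$ in $\mathcal{L}^1$ by bounded $\mathcal{F}_{j}$-measurable $\Lambda_j$; then $\E[\mathbbm{1}_A\Lambda_j(T_{n,n}-1)]=\E[\mathbbm{1}_A\Lambda_j(T_{n,j'}-1)]$ for $j'\ge\max(j,k)$, which $\to0$ since $T_{n,j'}\to1$ a.s.\ and boundedly. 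Letting $\epsilon\downarrow0$ after $n\to\infty$ removes the $O(\epsilon)$ slack and yields $\E[\mathbbm{1}_A e^{itS_n/a_n}]\to\E[\mathbbm{1}_A e^{-t^2\eta^2/2}]$ for all $t$ and all $A\in\mathcal{F}_k$; by the reduction of the first paragraph this is precisely the asserted $\mathcal{F}_\infty$-stable convergence $a_n^{-1}S_n\to\eta N$ with $N\sim\mathcal{N}(0,1)$ independent of $\mathcal{F}_\infty$, since $e^{-t^2\eta^2/2}=\E[e^{it\eta N}\mid\mathcal{F}_\infty]$.
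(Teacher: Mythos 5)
First, note that the paper does not prove this lemma at all: it is quoted from H\"ausler and Luschgy (2015) and used as a black box, so there is no internal proof to compare against. Your overall architecture --- reduction to $\E[\mathbbm{1}_A e^{itS_n/a_n}]\to\E[\mathbbm{1}_A e^{-t^2\eta^2/2}]$ for $A$ in the generating algebra $\cup_k\mathcal{F}_k$ (where the nesting condition is automatic because the filtration is fixed), Lindeberg truncation to bounded increments, and the product-martingale identity $e^{itS_n/a_n}=T_{n,n}\Pi_n$ --- is exactly the classical Hall--Heyde route that the cited reference follows, and your treatment of the second term, via approximation of the $\mathcal{F}_\infty$-measurable factor $e^{-t^2\eta^2/2}$ by bounded $\mathcal{F}_j$-measurable functions together with $\E[T_{n,n}\mid\mathcal{F}_{j'}]=T_{n,j'}\to 1$, is sound.

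The gap is in your handling of the first term $\E[\mathbbm{1}_A T_{n,n}(\Pi_n-e^{-t^2\eta^2/2})]$; two of the bounds you invoke do not hold as stated. (a) On $\{\eta^2\le C\}$ you claim $|T_{n,n}|\le 2e^{t^2C/2}$ ``for $n$ large,'' but $V_n\to\eta^2$ only in probability, so there is no $n_0$ beyond which $V_n$ is a.s.\ controlled on that event, and $|T_{n,n}|=|\Pi_n|^{-1}$ is governed by $V_n$, not by $\eta^2$. (b) On $\{\eta^2>C\}$ you bound the contribution by $2\Pr(\eta^2>C)$, but the integrand there is $|e^{itS_n/a_n}-T_{n,n}e^{-t^2\eta^2/2}|\le 1+|\Pi_n|^{-1}e^{-t^2\eta^2/2}$ with $|\Pi_n|^{-1}$ of order $e^{t^2V_n(1+O(\epsilon))/2}$, which is not pointwise dominated by anything integrable without exponential moments of $\eta^2$; the contribution of that event is therefore not controlled by its probability. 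The missing idea is a stopping-time localization performed \emph{before} forming the products: stop at $\tau_n:=\max\{j\le n:\ \sum_{m\le j}\E[\xi_m^2\mid\mathcal{F}_{m-1}]\le C\}$ (equivalently multiply $X_m$ by the predictable indicator of this event), which forces the predictable quadratic variation to be at most $C+4\epsilon^2$ deterministically and hence $\sup_j|T_{n,j}|\le e^{t^2(C+1)}$ uniformly in $n$ and $\omega$, so bounded convergence applies; the truncation is then removed by observing that the stopped and unstopped sums agree on $\{V_n\le C\}$, whose probability tends to $\Pr(\eta^2\le C)\to 1$ as $C\to\infty$. With that modification your argument closes; as written, the localization step fails.
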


\begin{corollary}\label{cor:stable_martingale_clt}
  In the setting of \Cref{lm:stable_martingale_clt}, if $X_n$ is uniformly bounded for all $n \in \mathbb{N}$, 
  $(\textrm{i})$ is sufficient for  ${a_n}^{-1} \sum_{m=1}^{n} X_m \rightarrow \eta \N \ (\stable)$ to hold.
\end{corollary}

\begin{proof}
It suffices to check that $(\textrm{ii})$ in \Cref{lm:stable_martingale_clt} is satisfied. Take a constant $C$ s.t. $|X_n| \leq C$ a.e. Then, $a_n \rightarrow \infty$ implies that $\forall\epsilon>0$, $\exists N_\epsilon\in\mathbb{N}$ s.t.
$
    n \geq N_\epsilon \Rightarrow a_n > C/\epsilon.
$
Observe that $n>N_\epsilon \Rightarrow |X_n| <  C < a_n\epsilon$. Then, for all $n>N_\epsilon$, 
$$\sum_{m=1}^{n} \E[X_m^2 \mathbbm{1}\{|X_m| \geq \epsilon a_n\}|\mathcal{F}_{m-1}]
    = \sum_{m=1}^{N_\epsilon} \E[X_m^2 \mathbbm{1}\{|X_m| \geq \epsilon a_n\}|\mathcal{F}_{m-1}]
    \leq N_\epsilon C^2,$$ which implies that  $a_n^{-2}\sum_{m=1}^{n} \E[X_m^2 \mathbbm{1}\{|X_m| \geq \epsilon a_n\}|\mathcal{F}_{m-1}]
    \leq \alpha_n^{-2} N_\epsilon C^2 \rightarrow 0$ (a.s.), especially in probability. This completes the proof. 
\end{proof}
Applying \Cref{cor:stable_martingale_clt} with $X_n$ taken to be the increment of the score function, we obtain the asymptotic mixed normality:
\begin{lemma}\label{lm:Psi_clt}
For $I_\alpha$ and $\ell_n(\alpha, \theta)$ defined by \eqref{eq:sibuya_fisher} and \eqref{eq:df_loglikelihood} respectively, we have
  \begin{align*}
    n^{-\alpha/2} \cdot \partial_\alpha \ell_n(\alpha, \theta) &\overset{}{\rightarrow}  
    \sqrt{\Mit I_\alpha}\cdot  \N \ \  \mathcal{F}_{\infty}\text{-stably as }  n\rightarrow \infty, 
  \end{align*}
  where $\Mit = \lim_{n\rightarrow\infty} n^{-\alpha}K_n$ and $\N\sim \normal (0,1)$ is independent of $\mathcal{F}_\infty$.
\end{lemma}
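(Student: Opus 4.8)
The plan is to apply the stable martingale CLT (Corollary~\ref{cor:stable_martingale_clt}) to the score, viewed as a sum of conditional score increments along the sequential construction of the Ewens--Pitman partition. Writing the log-likelihood as the telescoping sum $\ell_n(\alpha,\theta)=\sum_{m=1}^n\log p_m$, where $p_m$ is the conditional probability (given $\mathcal{F}_{m-1}$) of the placement of the $m$-th ball, differentiation gives $\partial_\alpha\ell_n(\alpha,\theta)=\sum_{m=1}^n X_m$ with $X_1=0$ and, for $m\ge 2$,
\[
X_m=\partial_\alpha\log p_m=
\begin{cases}
\dfrac{K_{m-1}}{\theta+K_{m-1}\alpha}, & \text{if the $m$-th ball opens a new block},\\[2mm]
-\dfrac{1}{|U_i|-\alpha}, & \text{if it joins the block $U_i$ (of size $|U_i|$ at time $m-1$)}.
\end{cases}
\]
(This telescoping agrees with the closed form $\partial_\alpha\ell_n=\sum_{i=1}^{K_n-1}i/(\theta+i\alpha)-\sum_{j}S_{n,j}\sum_{i=1}^{j-1}(i-\alpha)^{-1}$ of Appendix~A, by grouping the increments according to the final size of each block.) Using the placement probabilities $(\theta+K_{m-1}\alpha)/(\theta+m-1)$ and $(|U_i|-\alpha)/(\theta+m-1)$, a one-line computation gives $\E[X_m\,|\,\mathcal{F}_{m-1}]=0$, so $(X_m)$ is a martingale difference sequence for $(\mathcal{F}_m)$; moreover $|X_m|\le C:=\max\{(1-\alpha)^{-1},\alpha^{-1},(\theta+\alpha)^{-1}\}$, since $(j-\alpha)^{-1}\le(1-\alpha)^{-1}$ for $j\ge1$ and $k\mapsto k/(\theta+k\alpha)$ is monotone on $[1,\infty)$ with endpoint values $(\theta+\alpha)^{-1}$ and $\alpha^{-1}$. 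Hence Corollary~\ref{cor:stable_martingale_clt} applies with $a_n=n^{\alpha/2}$, and it only remains to verify its condition~(i): $n^{-\alpha}\sum_{m=1}^n\E[X_m^2\,|\,\mathcal{F}_{m-1}]\to^p \Mit I_\alpha$.

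For this I would compute the conditional second moments explicitly. The same case analysis yields
\[
\E[X_{m+1}^2\,|\,\mathcal{F}_m]=\frac{K_m^2}{(\theta+m)(\theta+K_m\alpha)}+\frac{1}{\theta+m}\sum_{i=1}^{K_m}\frac{1}{|U_i|-\alpha}=\frac{K_m}{\theta+m}\Bigl(\frac{K_m}{\theta+K_m\alpha}+\mathbb{P}_m\phi\Bigr),
\]
where $\phi(j):=(j-\alpha)^{-1}$ is \emph{bounded} and $\mathbb{P}_m\phi=K_m^{-1}\sum_j S_{m,j}/(j-\alpha)$. Now $K_m\to\infty$ a.s.\ gives $K_m/(\theta+K_m\alpha)\to\alpha^{-1}$ a.s.; Corollary~\ref{cor:finite_conv} applied to the bounded $\phi$ gives $\mathbb{P}_m\phi\to\mathbb{P}\phi=\sum_j p_\alpha(j)/(j-\alpha)$ a.s.; and formula~(B) of Proposition~\ref{prop:sibuya_fisher} identifies $\alpha^{-1}+\mathbb{P}\phi=\alpha I_\alpha$. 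Combining these with $K_m/m^\alpha\to\Mit$ a.s.\ (Lemma~\ref{lm:ep_asym}) gives
\[
m^{1-\alpha}\,\E[X_{m+1}^2\,|\,\mathcal{F}_m]=\frac{K_m}{m^\alpha}\cdot\frac{m}{\theta+m}\cdot\Bigl(\frac{K_m}{\theta+K_m\alpha}+\mathbb{P}_m\phi\Bigr)\;\longrightarrow\;\alpha I_\alpha\,\Mit\quad\text{a.s.}
\]

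The last step is a Cesàro/Kronecker-type averaging. Setting $c_m:=\E[X_{m+1}^2\,|\,\mathcal{F}_m]$, the display above reads $c_m=(\alpha I_\alpha\Mit+\varepsilon_m)\,m^{\alpha-1}$ with $\varepsilon_m\to0$ a.s.; since $n^{-\alpha}\sum_{m=1}^n m^{\alpha-1}\to\alpha^{-1}$ (integral comparison, valid because $\alpha-1>-1$) and the first $M$ summands of $\sum_m c_m$ are a.s.\ finite, hence negligible after dividing by $n^\alpha\to\infty$, splitting off the tail where $|\varepsilon_m|<\delta$ yields $n^{-\alpha}\sum_{m=1}^{n-1}c_m\to \alpha I_\alpha\Mit\cdot\alpha^{-1}=\Mit I_\alpha$ a.s., in particular in probability. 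Because $X_1=0$, this is exactly condition~(i) with $\eta^2=\Mit I_\alpha$, and Corollary~\ref{cor:stable_martingale_clt} delivers $n^{-\alpha/2}\partial_\alpha\ell_n(\alpha,\theta)=n^{-\alpha/2}\sum_{m=1}^n X_m\to\sqrt{\Mit I_\alpha}\,N$, $\mathcal{F}_\infty$-stably. The one point needing care is the conditional-variance analysis: one must keep track of the fact that the limit $\alpha I_\alpha\Mit$ of the rescaled conditional variances is itself random --- the constant $\alpha I_\alpha$ times the Mittag--Leffler variable $\Mit$ --- and that the a.s.\ convergence of $\mathbb{P}_m\phi$, available only because $\phi$ is bounded so that Corollary~\ref{cor:finite_conv} (rather than the more delicate Lemma~\ref{lm:logj_conv}) suffices, propagates correctly through the Cesàro averaging; the martingale CLT machinery then takes care of the rest.
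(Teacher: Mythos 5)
Your proposal is correct and follows essentially the same route as the paper's proof: decompose the score into the bounded martingale-difference increments from the sequential construction, compute the conditional variances, identify their normalized limit $\Mit I_\alpha$ via Corollary~\ref{cor:finite_conv} and formula~(B) of Proposition~\ref{prop:sibuya_fisher}, handle the summation by a Ces\`aro-type argument, and invoke Corollary~\ref{cor:stable_martingale_clt}. The only cosmetic differences are that you verify the martingale-difference property by direct computation (the paper simply cites the martingale property of the score) and you phrase the averaging step via $c_m=(\alpha I_\alpha\Mit+\varepsilon_m)m^{\alpha-1}$ rather than the paper's $c_mY_m$ split.
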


\begin{proof}
We take $X_n$ to be the increment of
  $\partial_\alpha \ell_n(\alpha, \theta)$
  with $a_n = n^{\alpha/2}$ and $\eta^2 = \Mit I_\alpha$ in \Cref{cor:stable_martingale_clt}. 
Since the score functions are $\mathcal{F}_n$-martingale, $(X_n)_{n=1}^\infty$ is an $\mathcal{F}_n$-martingale difference sequence. Then, it remains to show that $X_n$ is a bounded random variable and $n^{-\alpha} \sum_{m=1}^{n}\E\left[X_{m}^2| \mathcal{F}_{m-1}\right] \to^p I_\alpha \Mit$.

First, we check that $X_n$ is a bounded random variable. Recall 
\begin{align*}
    \partial_\alpha \ell_n(\alpha, \theta) = \sum_{i=1}^{K_n-1}\frac{i}{\theta + i\alpha} - \sum_{j=1}^n S_{n,j}\sum_{i=1}^{j-1}\frac{1}{i-\alpha}.
\end{align*}
The sequential definition given by \Cref{subsec:asym_ep} implies that the $(m+1)$-th ball belongs to a new urn with probability $(\theta + K_{m} \alpha)/(\theta + {m})$, or one of the urns of size $l$ with probability $S_{m,l}({l-\alpha})/({\theta + m})$ for each $l=1, 2, \dots, m$. In the former case, $K_{m+1} = K_{m} + 1$ and $S_{m+1,j} = S_{m,j} (\forall j\geq 2)$ holds, and in the latter case  $ K_{m+1} = K_{m}, S_{m+1,l} = S_{m,l} - 1$,  $S_{m+1,l+1} = S_{m,l+1} + 1$, 
    and $S_{m+1,j} = S_{m,j} (\forall j \notin \{l, l+1\})$. Therefore, for all $m\geq 2$, 
 \begin{align}\label{eq:increment}
    X_{m+1} |\mathcal{F}_{m}
    &= \left\{\begin{array}{ll}
       \sum_{i=1}^{K_{m}+1-1} \frac{i}{\theta+i\alpha}- \sum_{i=1}^{K_{m}-1} \frac{i}{\theta+i\alpha}& \text{w.p. } \frac{\theta + K_{m}\alpha}{\theta + m}\nonumber\\
         \sum_{i=1}^{l-1}\frac{1}{i-\alpha} - \sum_{i=1}^{l+1-1} \frac{1}{i-\alpha}& \text{w.p. } \frac{(l-\alpha)}{\theta + m}S_{m, l}, \ l \in [m]
     \end{array}
     \right.\\
    &= \left\{\begin{array}{ll}
        \frac{K_{m}}{\theta + K_{m}\alpha} & \text{w.p.} \frac{\theta + K_{m}\alpha}{\theta + m}\\
        -\frac{1}{l-\alpha} & \text{w.p. } \frac{(l-\alpha)}{\theta + m}S_{m, l}
        \text{ for all }l \in \{1, 2, \dots, m\}
     \end{array}
     \right.
 \end{align}
 and $X_1 = 0$.
Since $(l-\alpha)^{-1}\leq (1-\alpha)^{-1}$ for all $l\in \mathbb{N}$ and $k/(\theta + k \alpha) \leq ({\theta+\alpha})^{-1}\vee{\alpha}^{-1}$ for all $k\in \mathbb{N}$, $|X_n|$ is upper bounded by the constant $C:= (1-\alpha)^{-1}\vee ({\theta+\alpha})^{-1}\vee{\alpha}^{-1}$. 
 
Next, we show $\alpha^{-n} \sum_{m=1}^{n}\E\left[X_{m}^2| \mathcal{F}_{m-1}\right] \to^p I_\alpha \Mit$. 
Define $\sigma_n^2 := \sum_{m=1}^n\E\left[X_{m}^2| \mathcal{F}_{m-1}\right] = \sum_{m=0}^{n-1} \E\left[X_{m+1}^2| \mathcal{F}_{m}\right]$. Then, 
\eqref{eq:increment} implies that
\begin{align*}
    {\sigma_n^2} 
    &= 0^2 +\sum_{m=1}^{n-1} \left\{
    \frac{\theta + K_m\alpha}{\theta + m} \left(\frac{K_m}{\theta + K_m\alpha}\right)^2 +
    \sum_{j=1}^{m} \frac{j-\alpha}{\theta + m} S_{m, j} \left(\frac{1}{j-\alpha}\right)^2\right\}\\
    &= \sum_{m=1}^{n-1}
    \left\{
    \frac{K_m^2}{(\theta + m)(\theta + K_m \alpha)} + \sum_{j=1}^m \frac{S_{m,j}}{(\theta+m)(j-\alpha)} 
    \right\}
    \\
    &= \sum_{m=1}^{n-1} c_m Y_m
  \end{align*}
  where $c_m$ and $Y_m$ are defined as 
$$
c_m := \alpha m^{\alpha-1}, \quad Y_m :=  \frac{1}{\alpha m^{\alpha-1}} \Bigl\{
  \frac{K_m^2}{(\theta + m)(\theta + K_m \alpha)} + \sum_{j=1}^m \frac{S_{m,j}}{(\theta+m)(j-\alpha)} 
  \Bigr\}. 
$$
Note that $n^{-\alpha} \sum_{m=1}^{n-1} c_m \to 1$ as $n\to+\infty$. For $Y_m$, using the empirical measure $\mathbb{P}_m(j) = \frac{S_{m,j}}{K_m}$ and \Cref{cor:finite_conv}, we have that as $m\to+\infty$, 
\begin{align*}
  Y_m &= \frac{m}{m+\theta} \frac{K_m}{m^\alpha} \left\{\frac{K_m}{\alpha^2(\theta/\alpha + K_m)} + \mathbb{P}_m h \right\} && h(j) := \frac{1}{\alpha(j-\alpha)}\\
  &\to \Mit (\alpha^{-2} + \mathbb{P} h) && \text{by $m^{-\alpha} K_m \to \Mit$ and $\mathbb{P}_m h\to \mathbb{P}h$ }\\
  &= \Mit I_\alpha && \text{by \Cref{prop:sibuya_fisher}}
\end{align*}
Applying the triangle inequality to $n^{-\alpha} \sigma_n^{2}-I_\alpha \Mit = n^{-\alpha}\sum_{m=1}^{n-1} c_m Y_m - I_\alpha \Mit$, we have
\begin{align*}
    |n^{-\alpha} \sigma_n^{2}-I_\alpha \Mit| \leq n^{-\alpha} \sum_{m=1}^{n-1}c_m |Y_m - I_\alpha \Mit| +
     I_\alpha \Mit \left|n^{-\alpha} \sum_{m=1}^{n-1} c_m - 1\right|,
\end{align*}
where the second term is $o_p(1)$ by $n^{-\alpha} \sum_{m=1}^{n-1} c_m \to 1$. 
As for the first term, $Y_m\to \Mit I_\alpha$ (a.s.) implies that with probability $1$, 
$$
\forall \epsilon>0,\  \exists N_\epsilon \in \mathbb{N}\  \text{ such that } m> N_\epsilon \Rightarrow |Y_m - \Mit I_\alpha| <\epsilon.
$$
Therefore, for any $\epsilon>0$, as $n\to+\infty$, 
  \begin{align*}
    n^{-\alpha} \sum_{m=1}^{n-1} c_m |Y_m-\Mit I_\alpha|
    &\leq n^{-\alpha} \sum_{m=1}^{N_\epsilon} c_m |Y_m-\Mit I_\alpha| + \epsilon\cdot  n^{-\alpha} \sum_{m=1}^{n-1} c_m \\
    &= n^{-\alpha} O_p(1) + \epsilon \cdot n^{-\alpha} \sum_{m=1}^{n-1} c_m \\
    &\to^p 0 + \epsilon \cdot 1 =\epsilon.
  \end{align*}
 This means $n^{-\alpha}\sum_{m=1}^{n-1} c_m (Y_m-\Mit I_\alpha) \to^p 0$, thereby completing the proof. 
\end{proof}
Finally, we prove \Cref{prop:qmle_stable}; that is, the asymptotic mixed normality of QMLE.
$\hat{\alpha}_{n,\plug} - \hat{\alpha}_{n,\theta} = O_p(n^{-\alpha}\log n)$ by 
\eqref{eq:qmle_mle_error} in \Cref{prop:error} implies 
\begin{align*}
    n^{\alpha/2}(\hat{\alpha}_{n,\plug} - \alpha)& = n^{\alpha/2} (\hat{\alpha}_{n,\plug} - \hat{\alpha}_{n,\theta}) + n^{\alpha/2} ( \hat{\alpha}_{n,\theta} - \alpha)\\
    &= O_p(n^{-\alpha/2}\cdot\log n) + n^{\alpha/2} ( \hat{\alpha}_{n,\theta} - \alpha) \\
    &= o_p(1) + n^{\alpha/2} ( \hat{\alpha}_{n,\theta} - \alpha), 
\end{align*}
 for all $\plug\in (-\alpha,\infty)$.
  Therefore, it suffices to prove the claim for $\plug=\theta$.
  Let $\hat{\Psi}_{n,\theta} (\alpha) = K_n^{-1} \partial_\alpha \ell_n(\alpha, \theta)$. Then  $\hat{\Psi}'_{n,\theta}(x)<0$ for all $x\in(-\theta\vee 0,1)$ and $\hat{\Psi}_{n,\theta}(\hat{\alpha}_{n,\theta}) = 0$ with a high probability by \Cref{prop:qmle_unique}. Under this event, Taylor's theorem implies that
 there exists $\bar{\alpha}_n$ between $ \alpha$ and $\hat{\alpha}_{n,\theta}$ (hence $\bar{\alpha}_n - \alpha = o_p(1)$ by \Cref{lm:qmle_consistency} with $\plug=\theta$) s.t. $ \hat{\Psi}_{n,\theta}(\alpha) + \hat{\Psi}'_{n, \theta}(\bar{\alpha}_n) (\hat{\alpha}_{n,\theta} - \alpha)=0$. Combining this and $\hat{\Psi}_{n,\theta} (\alpha) = K_n^{-1} \partial_\alpha \ell_n(\alpha, \theta)$, we obtain
  \begin{align*}
    \sqrt{I_\alpha n^\alpha }\cdot  (\hat{\alpha}_{n,\theta} - \alpha)  = 
    \sqrt{I_\alpha n^\alpha} \cdot
    \frac{\hat{\Psi}_{n,\theta}(\alpha)}{
      -\hat{\Psi}'_{n,\theta}(\bar{\alpha}_n)
    }
    =  \underbrace{\frac{\partial_\alpha \ell_n(\alpha, \theta)
    }{\sqrt{n^\alpha I_\alpha}}}_{=:X_n}
    \cdot \underbrace{\frac{I_\alpha}{-\hat{\Psi}'_{n,\theta}(\bar{\alpha}_n)}\frac{n^\alpha}{K_n}}_{=:Y_n},
  \end{align*}
  where $X_n \rightarrow \sqrt{\Mit} \N$ $(\stable)$ by \Cref{lm:Psi_clt}, while $Y_n \to ^p \Mit^{-1}$ holds since $n^\alpha/K_n \to \Mit^{-1}$ (a.s.) and 
  $\hat{\Psi}'_{n,\theta}(\bar{\alpha}_n) \to^p -I_\alpha$ by \Cref{lm:random_seq_conv} and applied with $\plug = \theta$. Then, {Slutsky's} lemma applied with the $\mathcal{F}_\infty $-measurability of $\Mit$ and continuous mapping theorem (see \Cref{th:CS_stable}) result in
  \begin{align*}
      \sqrt{n^{\alpha} I_\alpha} (\hat{\alpha}_{n,\theta}-\alpha) = X_n \cdot Y_n \to \sqrt{\Mit} \N \cdot \Mit^{-1} = \N/\sqrt{\Mit} \ (\stable),
  \end{align*}
which concludes the proof.

\section{Proofs for MLE}
We have discussed QMLE so far, where the unknown $\theta$ is fixed to some value $\plug\in (-\alpha, +\infty)$. In this section, we consider the MLE 
$$
(\hat{\alpha}_n, \hat{\theta}_n) \in \argmax_{x\in (0,1), y>-x} \ell_n(x,y)
$$
where we simultaneously estimate $(\alpha, \theta)$. 

First, we reduce the dimension of the parameter that we have to consider: 
we define the function $\hat{y}_n$ : $(0,1) \to (-1, \infty)$ by
\begin{align*}
    \forall x\in (0,1), \quad  \hat{y}_n(x) \in \argmax_{y > -x} \ell_n(x, y),
\end{align*}
where $\ell_n(x, y) = \ell_n(\alpha, \theta)|_{(\alpha, \theta)=(x, y)}$ is the log likelihood. \Cref{lm:y_n} below shows that $\hat{y}_n$ is well-defined. 
\begin{lemma}[{\cite[Lemma 5.3]{carlton1999applications}}]\label{lm:y_n}
Suppose $1<K_n<n$. Then, for all $x\in (0,1)$, the nonlinear equation
\begin{align*}
 \partial_y \ell_n(x, y) = \sum_{i=1}^{K_n-1} \frac{1}{y+ix} - \sum_{i=1}^{n-1} \frac{1}{y+i} = 0 \quad \text{for} \quad y>-x
\end{align*}
admits a unique solution, and $\partial^2_y \ell_n(x, \cdot )$ takes a negative value at the solution.
\end{lemma}
Observe that the condition $1<K_n<n$ holds with a high probability since $n^{-\alpha}K_n \to \Mit > 0$ (a.s.). 
\Cref{lm:y_n} claims that the log-likelihood is an unimodal function of $y$ for each $x\in (0,1)$. This means that $\hat{y}_n$ is well-defined and uniquely characterized by the stationary condition:
\begin{align}\label{eq:df_y_n}
    \forall x\in (0,1), \quad  \partial_y \ell_n(x, \hat{y}_n(x)) = \sum_{i=1}^{K_n-1} \frac{1}{\hat{y}_n(x)+ix} - \sum_{i=1}^{n-1} \frac{1}{\hat{y}_n(x)+i} = 0.
\end{align}
Furthermore, the implicit function theorem with $\partial_y^2 \ell_n(x, y)\mid_{y=\hat{y}_n(x)} < 0$ implies that $\hat{y}_n$ is differentiable. 
The gain of introducing $\hat{y}_n$ is that the MLE $(\hat{\alpha}_n,\hat{\theta}_n)$ defined as the maxima of the log-likelihood can be formulated as the solution of the one-dimensional maximization problem: 
\begin{align}\label{eq:smle_Psi_hat_n}
  \hat{\alpha}_n \in  \argmax_{x\in (0,1)} \ell_n(x, \hat{y}_n(x)), \ \ \
  \hat{\theta}_n = \hat{y}_n(\hat{\alpha}_n).
\end{align}
Here, in a manner similar to $\hat{\Psi}_{n, \plug}$ \eqref{eq:df_qpsi_n}, we define the random function $\hat{\Psi}_n$ by
\begin{align}\label{eq:df_Psi_n_hat}
    \forall x\in (0,1), \quad  \hat{\Psi}_n(x) := \frac{1}{K_n}\cdot\frac{\diff}{\diff x} \ell_n(x, \hat{y}_n(x)).
\end{align}
Note that $ \hat{\Psi}_n(x) $ can be written as 
\begin{align*}
  \hat{\Psi}_n(x) 
  &=\frac{1}{K_n}\partial_x\ell_n(x, \hat{y}_n(x)) + \frac{1}{K_n} \partial_y\ell_n(x, \hat{y}_n(x))\cdot \hat{y}'_n(x) \\
  &= \frac{1}{K_n} \partial_x\ell_n(x, \hat{y}_n(x)) && \text{by $\partial_y \ell_n(x, \hat{y}_n(x)) = 0$}\\
  &= \frac{1}{x}  -\frac{1}{K_n x} - \frac{1}{K_n x}\sum_{i=1}^{K_n-1}
  \frac{\hat{y}_n(x)}{\hat{y}_n(x) + ix}- \mathbb{P}_n g_x && \text{by \eqref{eq:case_1}}\\
  &=\frac{1}{x}  -\frac{1}{K_n x} - \frac{1}{K_n x}\sum_{i=1}^{n-1}
  \frac{\hat{y}_n(x)}{\hat{y}_n(x) + i} - \mathbb{P}_n g_x && \text{by \eqref{eq:df_y_n}}
\end{align*}
Here, we define the function $\hat{z}_n: (0,1) \rightarrow (-1,\infty)$ as
\begin{align}\label{eq:df_z_n}
  \hat{z}_n: (0,1)\to (-1,\infty), \quad x \mapsto \hat{y}_n(x)/x
\end{align}
Then, using \eqref{eq:df_z_n} and $\Psi(x) = x^{-1} - \mathbb{P} g_x$, we can write $\hat{\Psi}_n-\Psi$ as 
\begin{align}\label{eq:hat_Psi_x-Psi}
    \hat{\Psi}_n (x) - \Psi(x) =  -\frac{1}{K_n x} - \frac{1}{K_n}\sum_{i=1}^{n-1}
  \frac{\hat{z}_n(x)}{x\hat{z}_n(x) + i} - (\mathbb{P}_n g_x - \mathbb{P} g_x).
\end{align}
\Cref{lm:uniform_control_delta_n} below provides a uniform control of $\hat{\Psi}_n'(x)$ and $\hat{z}_n(x)$ over a local neighborhood of $\alpha$.  

\begin{lemma}\label{lm:uniform_control_delta_n}
  For any positive constant $\delta_n = o(1/\log n)$, we have that
  \begin{enumerate}
    \item[(A)] $\sup_{x\in [\alpha \pm \delta_n]} |f_\alpha(\hat{z}_n(x)) -\log (K_n/n^\alpha)| = o_p(1)$. 
    \item[(B)] $\sup_{x\in[\alpha\pm\delta_n]} |\hat{\Psi}'_n(x) - \Psi'(x)| = o_p(1)$. 
  \end{enumerate}
\end{lemma}
\begin{proof}
  See \Cref{proof:uniform_control_delta_n_A} and \Cref{proof:uniform_control_delta_n_B}.
\end{proof}

{
\subsection{Proof of \Cref{prop:smle_unique}}\label{subsec:proof_smle_unique}
As we discussed at the beginning of this section, under the event $\{1<K_n<n\}$, which holds with a high probability, it holds that
$$
(\hat{\alpha}_n, \hat{\theta}_n) \in \argmax_{\alpha\in (0,1), \theta>-\alpha} \ell_n(\alpha, \theta) \Leftrightarrow
\Bigl(\hat{\alpha}_n \in \argmax_{x\in (0,1)} \ell_n(x, \hat{y}_n(x)) \text{ and } \hat{\theta}_n = \hat{y}_n(\hat{\alpha}_n)\Bigr).
$$
By \cite[Proposition 3 and 6]{balocchi2022bayesian}, 
there exist some positive constants $(c, \delta)$ such that with high probability, $\argmax_{x\in (0,1)} \ell_n(x, \hat{y}_n(x))$ is nonempty and is a subset of $[\hat{\alpha}_{n,0} \pm c n^{-\delta}]$. Here, $\hat{\alpha}_{n,0}$ is the QMLE with $\plug=0$ and \Cref{prop:qmle_stable} implies $\hat{\alpha}_{n,0}=\alpha + O_p(n^{\alpha/2})$. Therefore, it holds that 
$$
\Pr\Bigl( \emptyset \ne \argmax_{x\in (0,1)} \ell_n(x, \hat{y}_n(x)) \text{ and } \argmax_{x\in (0,1)} \ell_n(x, \hat{y}_n(x)) \subset [\alpha \pm c n^{-\delta}]\Bigr) \to 1
$$
for a sufficiently small $\delta>0$ and large $c>0$. Let $I=[\alpha/2, (\alpha+1)/2]\subset (0,1)$ so that $[\alpha\pm cn^{-\delta}]\subset I$ for sufficiently large $n$. Since $\Psi$ is $C^1$ and strictly decreasing (see \Cref{lm:Psi_property}), we can take a positive constant $A>0$ such that 
$
A = 2^{-1}\inf_{x\in I} |\Psi'(x)| = -2^{-1} \sup_{x\in I}\Psi'(x).
$
In contrast, \Cref{lm:uniform_control_delta_n} implies $\sup_{x\in [\alpha\pm cn^{-\delta}]} |\hat{\Psi}'_n(x) - \Psi'(x)| \leq A$ with a high probability. Putting them together, we have
\begin{align*}
    \sup_{x\in [\alpha\pm cn^{-\delta}]} \hat{\Psi}'_n(x) 
    &\leq \sup_{x\in I} \Psi'(x) + \sup_{x\in [\alpha\pm cn^{-\delta}]} (\hat{\Psi}'_n(x) - \Psi'(x)) && \text{by $\hat{I}\subset I$}\\
    &= -2A + \sup_{x\in [\alpha\pm cn^{-\delta}]} (\hat{\Psi}'_n(x) - \Psi'(x)) \\
    & \le -A && \text{by } \sup_{x\in [\alpha\pm cn^{-\delta}]} |\hat{\Psi}'_n(x) - \Psi'(x)| \leq A
\end{align*}
Therefore, we conclude that the event $\Omega_n$ defined as 
$$
\Omega_n := \Bigl\{\emptyset \ne \argmax_{x\in (0,1)}\ell_n (x, \hat{y}_n(x)) \subset [\alpha\pm cn^{-\delta}]\Bigr\} \cap \Bigl\{\sup_{x\in [\alpha\pm cn^{-\delta}]} \Psi_n'(x) <0
\Bigr\}
$$
holds with a high probability. Now, we claim that under $\Omega_n$, $\argmax_{x\in (0,1)}\ell_n (x, \hat{y}_n(x))$ has a unique element. 
We proceed by contradiction. Suppose $\argmax_{x\in (0,1)}\ell_n (x, \hat{y}_n(x))$ has two distinct elements $x, x'$. Noting $\Psi(x)=\frac{1}{K_n} \frac{d}{dx}\ell_n(x, \hat{y}_n(x))$, this means $\hat{\Psi}_n(x)=\hat{\Psi}_n(x')=0$, and by the mean value theorem, there exists some point $\bar{x}$ between $x$ and $x'$ such that $\Psi'_n(\bar{x}) = 0$. However, due to $x, x'\subset [\alpha\pm c n^{-\delta}]$, $\bar{x}$ also belong to $[\alpha\pm c n^{-\delta}]$, which contradicts the assumption $\sup_{x\in [\alpha\pm cn^{-\delta}]} \Psi_n'(x) <0$. Therefore, under $\Omega_n$, $\argmax_{x\in (0,1)}\ell_n (x, \hat{y}_n(x))$ has a unique element. Since $\Pr(\Omega_n)\to 1$, this completes the proof. 

}
\subsection{Proof of \eqref{eq:smle_mle_error} in \Cref{prop:error} and \Cref{th:smle_asym}}

By the results in \Cref{subsec:proof_smle_unique}, we get a rough rate $\hat{\alpha}_n = \alpha + O_p(n^{-\delta})$ for a sufficiently small $\delta>0$. Combined with \Cref{lm:uniform_control_delta_n}, we obtain 
$f_\alpha(\hat{z}_n(\hat{\alpha}_n)) -\log (K_n/n^\alpha) = o_p(1)$. 
With $K_n/n^\alpha \to\Mit$ and $\hat{\theta}_n = \hat{y}_n(\hat{\alpha}_n)=\hat{\alpha}_n \hat{z}_n(\hat{\alpha}_n)$, we get $\hat{z}_n(\hat{\alpha}_n) \to^p f_\alpha^{-1}(\log \Mit)$ and $\hat{\theta}_n \to^p \alpha f_\alpha^{-1}(\log \Mit)$. 

Let us show \eqref{eq:smle_mle_error} in \Cref{prop:error} first. 
Recall that \Cref{lm:qmle_qmle0_error} gives the error of $(\hat{\alpha}_{n,\plug}, \hat{\alpha}_{n,0})$ as follows:
\begin{align*}
    \frac{n^\alpha}{\log n} (\hat{\alpha}_{n, \plug}  - \hat{\alpha}_{n,0})
        \to^p \frac{-\plug}{I_\alpha \Mit}
\end{align*}
Thus, if we can show 
 \begin{align}\label{eq:mle_qmle_0_err}
        \frac{n^\alpha}{\log n} (\hat{\alpha}_n  - \hat{\alpha}_{n,0})
        \to^p \frac{-f_\alpha^{-1}(\log \Mit)}{I_\alpha \Mit}
\end{align}
then \eqref{eq:smle_mle_error} follows from the two displays above. Below, we show \eqref{eq:mle_qmle_0_err}. By \eqref{eq:Psi_qPsi_0} and $\Psi_n(\hat\alpha_n) = 0$, we have 
$$
\hat{\Psi}_{n,0}(\hat{\alpha}_n) = \hat{\Psi}_{n,0}(\hat{\alpha}_n) - \hat{\Psi}_n(\hat{\alpha}_n)
  = \frac{1}{K_n} \sum_{i=1}^{n-1}\frac{\hat{z}_n(\hat{\alpha}_n)}{x\hat{z}_n(\hat{\alpha}_n) + i}.
$$
By Taylor's theorem, there exists some $\bar{\alpha}_n$ between $\hat{\alpha}_n$ and $\hat{\alpha}_{n,0}$ such that $\bar{\alpha}_n\to^p\alpha$ and 
\begin{align*}
 \hat{\alpha}_n -\hat{\alpha}_{n,0} =
  \frac{\hat{\Psi}_{n,0}(\hat{\alpha}_n)}{\hat{\Psi}'_{n,0}(\bar{\alpha}_n)}
  = \frac{1}{K_n\cdot\hat{\Psi}'_{n,0}(\bar{\alpha}_n)} \sum_{i=1}^{n-1} \frac{\hat{z}_n(\hat{\alpha}_n)}{x\hat{z}_n(\hat{\alpha}_n)+ i}.
\end{align*} 
This gives 
\begin{align*}
    \frac{n^\alpha}{\log n} (\hat{\alpha}_n  - \hat{\alpha}_{n,0})
        &= \frac{1}{\hat{\Psi}_{n,0}(\bar{\alpha}_n)}\cdot \frac{n^\alpha}{K_n} \cdot \frac{\hat{z}_n(\hat{\alpha}_n)}{\log n} \sum_{i=1}^{n-1}\frac{1}{\hat{\alpha}_n \hat{z}_n(\hat{\alpha}_n) + i}\\
        &\overset{p}{\sim} \frac{1}{-I_\alpha}\cdot  \frac{1}{\Mit} \cdot f_\alpha^{-1}(\log \Mit) \cdot \frac{1}{\log n}  \sum_{i=1}^{n-1}\frac{1}{\hat{\alpha}_n \hat{z}_n(\hat{\alpha}_n) + i},
\end{align*}
where $\overset{p}{\sim}$ follows from $-\hat{\Psi}_{n,0}(\bar{\alpha}_n) \to^p I_\alpha$ by 
\Cref{lm:random_seq_conv} with $\plug=0$ and $\hat{z}_n(\hat{\alpha}_n) \to^p  f_\alpha^{-1} (\log \Mit)$.  Now we claim
 $(\log n)^{-1} \sum_{i=1}^{n-1}(\hat{\alpha}_n \hat{z}_n(\hat{\alpha}_n) + i)^{-1}\to^p 1$: \Cref{lm:digamma_ineq} with $\hat{\alpha}_n \hat{z}_n(\hat{\alpha}_n) = O_p(1) = o_p(n)$ implies that
\begin{align*}
\sum_{i=1}^{n-1} \frac{1}{\hat{\alpha}_n \hat{z}_n(\hat{\alpha}_n) + i} - \log n &= \psi(n +\hat{\alpha}_n \hat{z}_n(\hat{\alpha}_n)) - \log n - \psi(1 +\hat{\alpha}_n \hat{z}_n(\hat{\alpha}_n))\\
  &\to^p 0 - \psi(1+\alpha f_\alpha^{-1} (\log \Mit)),
\end{align*}
so $(\log n)^{-1} \sum_{i=1}^{n-1}(\hat{\alpha}_n \hat{z}_n(\hat{\alpha}_n) + i)^{-1} - 1 = o_p(1)$. This concludes the proof of \eqref{eq:mle_qmle_0_err} and finishes the proof of \eqref{eq:smle_mle_error} in \Cref{prop:error}. 

Finally, combining \eqref{eq:mle_qmle_0_err} and \Cref{prop:qmle_stable} together, we obtain
\begin{align*}
\sqrt{n^\alpha I_\alpha}(\hat{\alpha}_n - \alpha) &= \sqrt{n^\alpha I_\alpha}(\hat{\alpha}_n - \hat{\alpha}_{n,0}) + \sqrt{n^\alpha I_\alpha}(\hat{\alpha}_{n,0} - \alpha)\\
  &= O_p(n^{-\alpha/2} \log n) +  \sqrt{n^\alpha I_\alpha}(\hat{\alpha}_{n,0} - \alpha) && \text{ by \eqref{eq:mle_qmle_0_err}}\\
  &\to 0 + \N/\sqrt{\Mit} && \text{ by \Cref{prop:qmle_stable}},
\end{align*}
which concludes the proof of \Cref{th:smle_asym}.

\subsection{Proof of \Cref{lm:uniform_control_delta_n}-(A)}\label{proof:uniform_control_delta_n_A}
\begin{lemma}\label{Ap:z_n_nonasym}
Suppose $1<K_n<n$. Then, for all $x\in(0,1)$, it holds that
\begin{align*}
|\hat{z}_n(x)| \leq 1 \vee \left(
   A_n(x) \wedge 
   B_n(x)\wedge C_n(x;\alpha)
    \right) \quad \text{where } \begin{cases}
      A_n (x) &:= \frac{n K_n}{x(n-K_n)} \\
      B_n (x) &:= K_n \left\{
      \left(
      \frac{n+\hat{z}_n(x)}{1+\hat{z}_n(x)}
      \right)^x -1
      \right\}^{-1} \\
      C_n(x;\alpha) &:= n^{\frac{\alpha-x}{1-x}}\left(
      2\frac{K_n+\hat{z}_n(x)}{n^\alpha}
      \right)^{\frac{1}{1-x}}.
    \end{cases}
\end{align*}
\end{lemma}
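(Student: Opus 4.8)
The plan is to derive everything from the single equation defining $\hat{y}_n(x)$. By \Cref{lm:y_n} and its proof (\Cref{Ap:y_n}), under $1<K_n<n$ the value $\hat{y}_n(x)$ is the unique root of $\hat{\Phi}_{n,x}(y)=\sum_{i=1}^{K_n-1}(y+ix)^{-1}-\sum_{i=1}^{n-1}(y+i)^{-1}$ on $(-x,\infty)$, in particular $\hat{y}_n(x)>-x$. Substituting $\hat{y}_n(x)=x\hat{z}_n(x)$, using $(x\hat z_n(x)+ix)^{-1}=x^{-1}(\hat z_n(x)+i)^{-1}$ and $(x\hat z_n(x)+i)^{-1}=x^{-1}(\hat z_n(x)+i/x)^{-1}$, and cancelling the common $x^{-1}$, the root equation becomes the clean identity
\begin{align}\label{eq:clean_root}
\sum_{i=1}^{K_n-1}\frac{1}{\hat z_n(x)+i}=\sum_{i=1}^{n-1}\frac{1}{\hat z_n(x)+i/x},\qquad \hat z_n(x)>-1.
\end{align}
Write $z:=\hat z_n(x)$. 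Since $z>-1$, either $z\le1$, in which case $|z|\le1$ and the asserted bound holds trivially, or $z>1$, in which case $|z|=z$ and it suffices to prove $z\le A_n(x)$, $z\le B_n(x)$, and $z\le C_n(x;\alpha)$ separately; each comes out of \eqref{eq:clean_root} by pairing a one-sided estimate of the left sum with a one-sided estimate of the right sum (all denominators stay positive since $z>0$ in this range).

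For $A_n$ I would use the crude term-by-term bounds $\sum_{i=1}^{K_n-1}(z+i)^{-1}\le(K_n-1)/(z+1)$ and $\sum_{i=1}^{n-1}(z+i/x)^{-1}\ge(n-1)/(z+(n-1)/x)>x(n-1)/(xz+n)$. Substituting into \eqref{eq:clean_root}, cross-multiplying, and expanding gives $xz(n-K_n)\le(K_n-1)n-x(n-1)<nK_n$, and since $n-K_n\ge1$ this is exactly $z<nK_n/(x(n-K_n))=A_n(x)$. For $B_n$ I would instead use the integral comparisons $\sum_{i=1}^{K_n-1}(z+i)^{-1}\le\log\frac{z+K_n-1}{z}$ and $\sum_{i=1}^{n-1}(z+i/x)^{-1}\ge x\log\frac{xz+n}{xz+1}$, so \eqref{eq:clean_root} yields $\frac{z+K_n-1}{z}\ge\bigl(\frac{xz+n}{xz+1}\bigr)^{x}$. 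A short algebraic step shows $\frac{xz+n}{xz+1}\ge\frac{z+n}{z+1}>1$ for $z>0$ (the difference of the cross-products is $z(n-1)(1-x)>0$), hence $1+\frac{K_n-1}{z}\ge\bigl(\frac{z+n}{z+1}\bigr)^{x}$, which rearranges to $z\le\frac{K_n-1}{(\frac{z+n}{z+1})^{x}-1}<B_n(x)$.

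For $C_n$ I would start again from $\frac{z+K_n-1}{z}\ge\bigl(\frac{xz+n}{xz+1}\bigr)^{x}$ but now exploit $z>1$: then $xz+1<xz+z<2z$, so $\frac{xz+n}{xz+1}>\frac{n}{2z}$, and bounding $\frac{z+K_n-1}{z}<\frac{z+K_n}{z}$ on the left gives $\bigl(\frac{z+K_n}{z}\bigr)^{1/x}>\frac{n}{2z}$. Raising to the power $x$ and using $(2z)^{x}\le2z^{x}$ turns this into $\frac{z+K_n}{z}>\frac{n^{x}}{2z^{x}}$, i.e. $n^{x}z^{1-x}<2(K_n+z)$. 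Since $C_n(x;\alpha)=n^{(\alpha-x)/(1-x)}\bigl(2(K_n+z)/n^{\alpha}\bigr)^{1/(1-x)}=\bigl(2(K_n+z)/n^{x}\bigr)^{1/(1-x)}$ --- the $n^{\alpha}$ factors cancel, so $\alpha$ is in fact inessential here --- this is precisely $z<C_n(x;\alpha)$. Taking the minimum of the three bounds, together with the trivial case $z\le1$, finishes the proof. I do not expect a serious obstacle once \eqref{eq:clean_root} is in hand; the only thing requiring care is matching each of $A_n,B_n,C_n$ with the appropriate pair of elementary one-sided estimates of the two sums --- pointwise bounds for $A_n$, integral comparison for $B_n$, integral comparison plus the $z>1$ simplification for $C_n$ --- plus the small monotonicity lemma $\frac{xz+n}{xz+1}\ge\frac{z+n}{z+1}$; everything else is routine cross-multiplication and monotonicity of $t\mapsto t^{x}$.
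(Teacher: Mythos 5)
Your proposal is correct and takes essentially the same route as the paper's own proof: split on $\hat z_n(x)\le 1$ versus $\hat z_n(x)>1$, use the defining root identity for $\hat y_n(x)$, a termwise bound to get $A_n$, and logarithmic (integral) comparisons of the two sums to get $B_n$ and $C_n$, with only cosmetic differences in the intermediate inequalities (e.g.\ your monotonicity step $\frac{x z+n}{x z+1}\ge\frac{z+n}{z+1}$ replaces the paper's bound $\sum_i \frac{x}{z+i}\le\sum_i\frac{x}{xz+i}$). Your remark that the $n^{\alpha}$ factors cancel, so that $C_n(x;\alpha)=\bigl(2(K_n+\hat z_n(x))/n^{x}\bigr)^{1/(1-x)}$ and $\alpha$ plays no role in this lemma, is also accurate.
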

\begin{proof}
We fix $x\in(0,1)$.
Suppose $\hat{z}_n(x) < 1$. Then the assertion is obvious from $1>\hat{z}_n(x) = \hat{y}_n(x)/x > -x/x = -1$, so we consider the case $\hat{z}_n(x) \geq 1$. Recall that \eqref{eq:df_y_n} and $\hat{y}_n(x)=x\hat{z}_n(x)$ give 
\begin{align*}
     \sum_{i=1}^{n-1}\frac{x}{x\hat{z}_n(x) + i} = \sum_{i=1}^{K_n-1}\frac{1}{\hat{z}_n(x) + i}.
\end{align*}
Combined with the basic inequality $\frac{(m-1)}{\delta + (m-1)} <\sum_{i=1}^{m-1}\frac{1}{\delta+i} < \frac{m-1}{\delta+1}$, we obtain 
\begin{align*}
     \frac{x(n-1)}{x\hat{z}_n(x) + n-1} \le \frac{K_n-1}{\hat{z}_n(x) + 1}.
\end{align*}
Rearranging the above display, we get $\hat{z}_n(x)\le A_n(x)$. 
In contrast, by elementary calculus, we have that for all $ m \in \mathbb{N}_{\geq 2}$ and for all $\delta>0$, 
    \begin{align*}
        \log \left(\frac{\delta + m}{\delta + 1}\right) < \sum_{i=1}^{m-1}\frac{1}{\delta + i} < \log\left(
        \frac{\delta + m-1}{\delta}\right) < \log \left(
        \frac{\delta + m}{\delta}
        \right).
    \end{align*}
    Applying this inequality to $\sum_{i=1}^{n-1}\frac{x}{x\hat{z}_n(x) + i} = \sum_{i=1}^{K_n-1}\frac{1}{\hat{z}_n(x) + i}$, using the assumption $\hat{z}_n(x) \geq 1>0$, we have 
    \begin{align*}
        x\log \left(\frac{\hat{z}_n(x)+n}{\hat{z}_n(x)+1}\right)<\sum_{i=1}^{n-1}\frac{x}{\hat{z}_n(x) + i}\leq\sum_{i=1}^{n-1}\frac{x}{x\hat{z}_n(x) + i} = \sum_{i=1}^{K_n-1}\frac{1}{\hat{z}_n(x) + i} <\log\left(\frac{\hat{z}_n(x)+K_n}{\hat{z}_n(x)}\right)
    \end{align*}
    and hence 
    \begin{align}\label{eq:z_n_inequality}
      \left(\frac{\hat{z}_n(x)+n}{\hat{z}_n(x)+1}\right)^x < \frac{\hat{z}_n(x)+ K_n}{\hat{z}_n(x)}.
    \end{align}
    Rearranging this, we obtain $\hat{z}_n(x)\le B_n(x)$. 
    Furthermore, \eqref{eq:z_n_inequality} and the assumption $\hat{z}_n(x) \geq 1 >0$ yield
    \begin{align*}
        \frac{\hat{z}_n(x)}{(\hat{z}_n(x)+1)^x} < \frac{\hat{z}_n(x)+K_n}{(\hat{z}_n(x)+n)^x} < \frac{\hat{z}_n(x)+K_n}{(0+n)^x} = \frac{\hat{z}_n(x)+K_n}{n^x}
    \end{align*}
    and 
    \begin{align*}
      \hat{z}_n(x)^{1-x} <\left(\frac{1+\hat{z}_n(x)}{\hat{z}_n(x)}\right)^x  \frac{\hat{z}_n(x)+K_n}{n^x} 
      \leq \left(\frac{1+1}{1}\right)^x  \frac{\hat{z}_n(x)+K_n}{n^x} < n^{\alpha-x} \cdot 2\frac{\hat{z}_n(x)+K_n}{n^\alpha}.
    \end{align*}
    Rearranging the above display, we get $\hat{z}_n(x)\le C_n(x;\alpha)$. 

\end{proof}

Next, we prove \Cref{lm:z_n} below. Note that \Cref{lm:uniform_control_delta_n}-(A) follows as a corollary of \Cref{lm:z_n}-(C). 

\begin{lemma}\label{lm:z_n}
$\hat{z}_n$ satisfies the followings:
\begin{enumerate}
    \item[(A)] $\hat{z}_n(x)$ is strictly decreasing in $x$, and its derivative is  given by
    \begin{align*}
        z'_n(x) = \frac{\sum_{i=1}^{n-1}i\cdot(x\hat{z}_n(x)+i)^{-2}} {{-\sum_{i=1}^{K_n-1}(\hat{z}_n(x)+i)^{-2} + \sum_{i=1}^{n-1}x^2\cdot (x\hat{z}_n(x)+i)^{-2}}}.
    \end{align*}
    \item [(B)] 
    $|\hat{z}_n(x)| = 1 \vee O_p(n^{\frac{a-x}{1-x}})
    $.
    \item[(C)] For all $\delta_n=o(1/\log n)$,  
    \begin{align*}
      \sup_{x\in [\alpha \pm \delta_n]}|\hat{z}_n(x)| = O_p(1) \quad \text{and} \quad 
       \sup_{x\in [\alpha \pm \delta_n]} |f_\alpha(\hat{z}_n(x)) -\log (K_n/n^\alpha)| = o_p(1). 
\end{align*}
\end{enumerate}
\end{lemma}
\begin{proof}
\textbf{Proof of (A):}
Suppose $1<K_n<n$. Then, for all $x\in(0,1)$, \Cref{lm:y_n} implies that $\hat{y}_n(x)$ is the unique solution of 
\begin{align*}
    \partial_y\ell_n(x,y)= \sum_{i=1}^{K_n-1}\frac{1}{y + ix} - \sum_{i=1}^{n-1} \frac{1}{y + i}=0, \quad y>-x
\end{align*}
and $\partial^2_y \ell_n(x, \hat{y}_n(x)) < 0$. Therefore, the implicit function theorem implies
\begin{align*}
    \hat{y}_n'(x) =  -\frac{\partial_x \partial_y\ell_n(x,\hat{y}_n(x))}{\partial_y^2 \ell_n(x,\hat{y}_n(x))}.
\end{align*}
The derivative of $\hat{z}_n(x) = \hat{y}_n(x)/x$ can be written as
\begin{align}\label{eq:hat_z_derivative}
    \hat{z}_n'(x) = \frac{-\hat{y}_n(x) + x\hat{y}_n'(x)}{x^2} = \frac{-\hat{y}_n(x) \partial_y^2 \ell_n(x,\hat{y}_n(x)) - x\partial_x \partial_y\ell_n(x,\hat{y}_n(x))}{x^2 \partial_y^2 \ell_n(x,\hat{y}_n(x)) }.
\end{align}
Here, the numerator is calculated as  
\begin{align*}
    &-\hat{y}_n(x) \partial_y^2 \ell_n(x,\hat{y}_n(x)) - x\partial_x \partial_y\ell_n(x,\hat{y}_n(x))\\
    &=
    \sum_{i=1}^{K_n-1}\frac{\hat{y}_n(x)}{(\hat{y}_n(x)+ix)^2} - \sum_{i=1}^{n-1}\frac{\hat{y}_n(x)}{(\hat{y}_n(x)+i)^2} + \sum_{i=1}^{K_n-1}\frac{ix}{(\hat{y}_n(x)+ix)^2}\\
    &= \sum_{i=1}^{K_n-1}\frac{1}{\hat{y}_n(x) + ix} - \sum_{i=1}^{n-1}\frac{\hat{y}_n(x)}{(\hat{y}_n(x) + i)^2}\\
    &= \sum_{i=1}^{n-1}\frac{1}{\hat{y}_n(x) + i} - \sum_{i=1}^{n-1}\frac{\hat{y}_n(x)}{(\hat{y}_n(x) + i)^2} \qquad \text{by $\sum_{i=1}^{K_n-1}\frac{1}{\hat{y}_n(x) + ix} = \sum_{i=1}^{n-1} \frac{1}{\hat{y}_n(x) + i}$ }\\
    &= \sum_{i=1}^{n-1}\frac{i}{(\hat{y}_n(x)+i)^2},
\end{align*}
As for the denominator of \eqref{eq:hat_z_derivative}, with $\partial_y^2 \ell_n(x, \hat{y}_n(x))<0$, we have 
\begin{align*}
   0>x^2\partial_y^2 \ell_n(x, \hat{y}_n(x)) &= -\sum_{i=1}^{K_n-1}\frac{x^2}{(\hat{y}_n(x)+ix)^2}
    + \sum_{i=1}^{n-1}\frac{x^2}{(\hat{y}_n(x)+i)^2}.
\end{align*}
Therefore, $\hat{z}_n'(x)$ is strictly negative, and with $\hat{y}_n(x)=x\hat{z}_n(x)$, we obtain 
\begin{align*}
 0 > \hat{z}'_n(x) = \frac{\sum_{i=1}^{n-1} {i}/{(x \hat{z}_n(x)+i)^2} }{-\sum_{i=1}^{K_n-1}(\hat{z}_n(x)+i)^{-2}
    + \sum_{i=1}^{n-1}{x^2}/{(x \hat{z}_n(x)+i)^2}}.
\end{align*}
This completes the proof of (A).

\textbf{Proof of (B):}
We fix $x\in(0,1)$. 
For $A_n(x)$ in \Cref{Ap:z_n_nonasym}, $K_n = O_p(n^\alpha) = o_p(n)$ implies 
\begin{align*}
    \frac{A_n(x)}{n} = \frac{K_n}{x(n-K_n)} = o_p(1),
\end{align*}
and hence $|\hat{z}_n(x)| \leq 1 \vee A_n(x) = o_p(n)$. Considering this, for $B_n(x)$ in \Cref{Ap:z_n_nonasym}, it holds that
\begin{align*}
    \frac{B_n(x)}{n^\alpha} = \frac{K_n}{n^\alpha} \cdot \left\{
    \left(
    \frac{n+\hat{z}_n(x)}{1+\hat{z}_n(x)}
    \right)^x -1
    \right\}^{-1} \to^p  \Mit \cdot\  0 = 0,
\end{align*}
which implies $|\hat{z}_n(x)| \leq 1 \vee B_n(x) = o_p(n^\alpha)$.
Then, for $C_n(x;\alpha)$ in \Cref{Ap:z_n_nonasym}, we get
\begin{align*}
    n^{-\frac{\alpha-x}{1-x}} \cdot C_n(x;\alpha) = \left(
    2\frac{K_n+\hat{z}_n(x)}{n^\alpha}
    \right)^{\frac{1}{1-x}} \to^p \left(
    2 \Mit + 0
    \right)^{\frac{1}{1-x}} = (2\Mit)^{\frac{1}{1-x}}, 
\end{align*}
Therefore, 
\begin{align*}
    |\hat{z}_n(x)| \leq 1 \vee  C_n(x;\alpha) = 1 \vee O_p(n^{\frac{\alpha-x}{1-x}})
    =\left\{
    \begin{array}{ll}
       O_p(n^{\frac{\alpha-x}{1-x}}) & 0<x<\alpha \\
        O_p(1) & \alpha\leq x<1
    \end{array}
    \right.,
\end{align*}
thereby completing the proof. 

\textbf{Proof of (C)}.
Note that we can take $(s,t)$ satisfying $0<s<\alpha<t<1$ and
$[\alpha\pm\delta_n] \subset [s,t] \subset (0,1)$ for sufficiently large $n$ by $\delta_n = o(1/\log n) = o(1)$. Considering $\hat{z}_n(x)$ is monotone on $(0,1)$ by (A), (B) implies
\begin{align*}
    \sup_{x\in[\alpha\pm\delta_n]} |\hat{z}_n(x)| \leq \sup_{x\in[s,t]} |\hat{z}_n(x)| \leq |\hat{z}_n(s)| + |\hat{z}_n(t)| = O_p(n^{\frac{\alpha-s}{1-s}}) + O_p(1)= o_p(n^\alpha).
\end{align*}
Then, for $C_n(x; \alpha)$ in \Cref{Ap:z_n_nonasym}, it holds that
\begin{align*}
    \log C_n(x=\alpha\pm\delta_n;\alpha) &=\frac{\mp\delta_n}{1-\alpha\mp\delta_n}\log n + \frac{1}{1-\alpha\mp\delta_n} \log\left( 2\frac{K_n+\hat{z}_n(\alpha\pm\delta_n)}{n^\alpha}\right) \\
    &\rightarrow 0 + \frac{1}{1-\alpha} \log (2\Mit + 0)
\end{align*}
and hence $C_n(\alpha\pm\delta_n;\alpha) = O_p(1)$.
Therefore, the monotonicity of $\hat{z}_n(x)$ on $(0,1)$ and \Cref{Ap:z_n_nonasym} imply
\begin{align*}
    \sup_{x\in\alpha\pm\delta_n} |\hat{z}_n(x)| \leq \max |\hat{z}_n(\alpha\pm\delta_n)| \leq 1 \vee \max C_n(\alpha\pm\delta_n; \alpha) = O_p(1).
\end{align*}
Finally, we show $\sup_{x\in [\alpha\pm\delta_n]}|f_\alpha(\hat{z}_n(x)) - \log (K_n/n^\alpha)|=o_p(1)$. 
$\hat{z}'_n(x) < 0$ by (A) and $f'_\alpha(z) > 0$ by \Cref{lm:f_alpha} imply
$(f_\alpha\circ \hat{z}_n)'(x) < 0$ for all $x\in(0,1)$, so $f_\alpha\circ \hat{z}_n$ is monotone on $(0,1)$. This implies
$$
  \sup_{x\in [\alpha\pm\delta_n]}|f_\alpha(\hat{z}_n(x)) - \log (K_n/n^\alpha)| \le \max |f_\alpha(\hat{z}_n(\alpha\pm\delta_n)) - \log (K_n/n^\alpha)|,
$$
Let us denote $\hat{z}_n(\alpha\pm\delta_n)$ by $\hat{z}_n^{\pm}$ and $\alpha\pm\delta_n$ by $\alpha_n^\pm$. With $\alpha^{\pm}_n \hat{z}_n^\pm = \hat{y}_n(\alpha_n^\pm)$, \eqref{eq:df_y_n} can be written as 
$$
0 =\sum_{i=1}^{K_n-1}\frac{1}{\hat{z}_n^\pm + i} - \sum_{i=1}^{n-1}\frac{\alpha_n^{\pm}}{\alpha_n^{\pm} \hat{z}_n^\pm + i}. 
$$
Using the digamma function $\psi$ and $f_\alpha(z) = \psi(1+z) - \alpha\psi(1+\alpha z)$, we can write the above display as 
\begin{align*}
    f_\alpha(\hat{z}_n^\pm) - \log (K_n/n^\alpha) &= 
    (\psi(K_n + \hat{z}_n^\pm) - \log K_n) -\alpha(\psi(n+\alpha_n^\pm\hat{z}_n^\pm)-\log n)\\
    &- \sum_{i=1}^{n-1}\frac{\alpha_n^\pm-\alpha}{\alpha_n^\pm\hat{z}_n^\pm+i}
    -  \sum_{i=1}^{n-1}\frac{\alpha (\alpha-\alpha_n^\pm)\hat{z}_n^\pm}{(\alpha_n^\pm\hat{z}_n^\pm + i)(\alpha \hat{z}_n^\pm + i)}.
\end{align*}
With $\hat{z}_n^\pm = O_p(1)$ and \Cref{lm:digamma_ineq}, we know that the first and second terms are $o_p(1)$. As for the third and fourth terms,
$\alpha_n^\pm \in [s,t] \subset (0,1)$ for sufficiently large $n$ and $\hat{z}_n^\pm = O_p(1)$  result in
\begin{align*}
    \left|\sum_{i=1}^{n-1}\frac{\alpha_n^\pm-\alpha}{\alpha_n^\pm\hat{z}_n^\pm+i}\right| &\leq \delta_n\sum_{i=1}^{n-1}\frac{1}{i-\alpha_n^\pm} 
    \leq \delta_n \sum_{i=1}^{n-1}\frac{1}{i-t} 
    = 
    o(1/\log n) O_p(\log n) = o_p(1),\\
    \left|\sum_{i=1}^{n-1}\frac{\alpha (\alpha-\alpha_n^\pm)\hat{z}_n^\pm }{(\alpha_n^\pm\hat{z}_n^\pm + i)(\alpha \hat{z}_n^\pm + i)}\right| &\leq \alpha \delta_n |\hat{z}_n^\pm| \sum_{i=1}^{n-1}\frac{1}{(-t+i)(-\alpha+i)} = o(1/\log n) O_p(1) O(1) = o_p(1).
\end{align*}
This concludes $f_\alpha(\hat{z}_n^\pm) - \log (K_n/n^\alpha) = o_p(1)$ and completes the proof. 
\end{proof}
\subsection{Proof of \Cref{lm:uniform_control_delta_n}-(B)}\label{proof:uniform_control_delta_n_B}
 Let $B_n = [\alpha\pm\delta_n]$ with $\delta_n=o(1/\log n)$. Since $\delta_n\to 0$, we can take $[s,t]$ such that $B_n \subset [s,t] \subset (0,1)$ for sufficiently large $n$. By the triangle inequality, we have 
 \begin{align*}
     \sup_{x\in B_n} |\hat{\Psi}'_n(x) - \Psi'(x)| &\leq \sup_{x\in B_n} |\hat{\Psi}'_n(x) - \hat{\Psi}_{n,0}'(x)| + \sup_{x\in [s,t]} |\hat{\Psi}'_{n,0}(x) - \Psi'(x)|,
 \end{align*}
 where the second term is $o_p(1)$ by (B) of \Cref{lm:qPsi_n_conv}. Thus, it suffices to show that $\sup_{x\in B_n} |\hat{\Psi}'_n(x) - \hat{\Psi}_{n,0}'(x)| =o_p(1)$. Here, combining \eqref{eq:hat_Psi_x-Psi} and 
 \eqref{eq:df_qpsi_n} with $\plug = 0$, we have
 \begin{align}\label{eq:Psi_qPsi_0}
     \hat{\Psi}_n (x) - \hat{\Psi}_{n,0}(x) = -\frac{1}{K_n}\sum_{i=1}^{n-1} \frac{\hat{z}_n(x)}{x\hat{z}_n(x) + i}.
 \end{align}
Taking the derivative with respect to $x$, we get  
 \begin{align*}
     \hat{\Psi}_n' (x) - \hat{\Psi}_{n,0}'(x)
     &=\frac{1}{K_n}\sum_{i=1}^{n-1} \frac{(\hat{z}_n(x))^2} {(x\hat{z}_n(x) + i)^2} 
   - \frac{\hat{z}'_n(x)}{K_n} \sum_{i=1}^{n-1}
   \frac{i}{(x\hat{z}_n(x) + i)^2}.
 \end{align*}
 Note $x\hat{z}_n(x)  > -x$. Then, the triangle inequality and $B_n\subset [s,t]\subset (0,1)$ imply that 
 \begin{align*}
   \sup_{x\in B_n}\left|\hat{\Psi}'_n(x) - \hat{\Psi}'_{n,0}(x)\right| \leq \frac{\sup_{x\in B_n}|\hat{z}_n(x)|^2}{K_n}\sum_{i=1}^\infty\frac{1}{(i-t)^2} + \sup_{x\in B_n}\left|\frac{\hat{z}'_n(x)}{K_n} \sum_{i=1}^{n-1}
   \frac{i}{(x\hat{z}_n(x) + i)^2}\right|,
 \end{align*}
 where the first term is $o_p(1)$ considering $\sup_{x\in B_n}|\hat{z}_n(x)| = O_p(1)$ by (C) of \Cref{lm:z_n}. For the second term, the formula of $\hat{z}_n'(x)$ by
 (A) of \Cref{lm:z_n} implies that
 \begin{align*}
     &\frac{\hat{z}'_n(x)}{K_n} \sum_{i=1}^{n-1}
   \frac{i}{(x\hat{z}_n(x) + i)^2}= \frac{1}{K_n}\frac{\left(\sum_{i=1}^{n-1}
 i \cdot (x\hat{z}_n(x) + i)^{-2} \right)^2}{\sum_{i=1}^{K_n-1} (\hat{z}_n(x)+i)^{-2} - \sum_{i=1}^{n-1}{x^2}\cdot{(x\hat{z}_n(x)+i)^{-2}}}.
 \end{align*}
 Let $J_n(x):=\sum_{i=1}^{K_n-1} (\hat{z}_n(x)+i)^{-2} - \sum_{i=1}^{n-1}{x^2}\cdot{(x\hat{z}_n(x)+i)^{-2}}$ be the denominator. 
Using $x\hat{z}_n(x)> -x$ and $B_n\subset [s,t]\subset (0,1)$ again, we obtain
 \begin{align*}
    \sup_{x\in B_n}\left|\frac{\hat{z}'_n(x)}{K_n} \sum_{i=1}^{n-1}
   \frac{i}{(x\hat{z}_n(x) + i)^2}\right| &\leq \frac{1}{K_n} \left(\sum_{i=1}^{n-1}\frac{i}{(i-t)^2}\right)^2 \frac{1}{\inf_{x\in B_n}|J_n(x)|} = O_p\left(\frac{(\log n)^2}{n^\alpha}\right) \frac{1}{\inf_{x\in B_n}|J_n(x)|},
 \end{align*}
 so it remains to show  $(\inf_{x\in B_n}|J_n(x)|)^{-1} = O_p(1)$. The rest of the proof is technical, so we divide it into three steps:
 Letting $f_\alpha$ be the bijective function defined by \eqref{eq:df_f_alpha}, first we prove that $$\sup_{x\in B_n}|-J_n(x) + f'_\alpha(\hat{z}_n(x))| = o_p(1).$$ Next we show that $$\sup_{x\in B_n} |f'_\alpha(\hat{z}_n(x)) - f'_\alpha \circ f_\alpha^{-1}(\log (K_n/n^\alpha))| = o_p(1).$$ Putting together the above displays, we prove that
 \begin{align*}
     1/\inf_{x\in B_n} |J_n(x)| \leq \left(2 f_\alpha' \circ f_\alpha^{-1} (\log K_n/n^\alpha)\right)^{-1} = O_p(1).
 \end{align*}
 
 {Step 1. $\sup_{x\in B_n}|-J_n(x) + f'_\alpha(\hat{z}_n(x))| = o_p(1)$}:
 Using the basic equation of the trigamma function: $\psi^{(1)}(1+z)=\sum_{i=1}^\infty (i+z)^{-2}$ (for all $z>-1$), we write $J_n(x)$ as
 \begin{align*}
     J_n(x) 
     &= -\psi^{(1)}(K_n + \hat{z}_n(x)) + \psi^{(1)}(1+\hat{z}_n(x)) + x^2 \psi^{(1)}(x\hat{z}_n(x) + n) - x^2\psi^{(1)}(x\hat{z}_n(x) + 1).
 \end{align*}
 Observe that
 $f_\alpha'(z) = \psi^{(1)}(1+z) - \alpha^2\psi^{(1)}(1+\alpha z)$ by the definition $f_\alpha(z) = \psi(1+z)-\alpha \psi(1+\alpha z)$. Then, $-J_n(x) + f_\alpha'(\hat{z}_n(x))$ can be written as
 \begin{align*}
     -J_n(x) + f_\alpha'(\hat{z}_n(x)) = \psi^{(1)}(K_n + \hat{z}_n(x)) - x^2 \psi^{(1)}(x\hat{z}_n(x) + n) + x^2\psi^{(1)}(x\hat{z}_n(x) + 1)- \alpha^2 \psi^{(1)}(\alpha \hat{z}_n(x) + 1)
 \end{align*}
 Note that $\psi^{(1)}(x)$ is positive and decreasing on $(0,\infty)$. Then, using $\hat{z}_n(x) > -1$ and $B_n =[\alpha \pm \delta_n] (\subset [s,t]\subset(0,1))$ again, we obtain the following for all $x\in B_n$:
 \begin{align*}
 &|-J_n(x) +  f'_\alpha(\hat{z}_n(x))|\\
 &\leq \psi^{(1)}(K_n + \hat{z}_n(x)) + x^2 \psi^{(1)}(x\hat{z}_n(x) + n)
    + |x^2-\alpha^2| \psi^{(1)}(x\hat{z}_n(x) + 1) + \alpha^2 |\psi^{(1)}(x \hat{z}_n(x) + 1)-\psi^{(1)}(\alpha \hat{z}_n(x) + 1)|\\
    &\leq \psi^{(1)} (K_n -1) + \psi^{(1)}(n -1) + 2\delta_n \psi^{(1)} (1-t) + |\psi^{(1)}(x \hat{z}_n(x) + 1)-\psi^{(1)}(\alpha \hat{z}_n(x) + 1)|.
 \end{align*}
 The first, the second, and the third terms do not depend on $x$, and they are $o_p(1)$ by $\psi^{(1)}(x) \rightarrow 0$ as $x\rightarrow\infty$ and $\delta_n = o(1)$. Thus, we have
 $$\sup_{x\in B_n}|-J_n(x) + f'_\alpha(\hat{z}_n(x))| \leq o_p(1) + \sup_{x\in B_n} |\psi^{(1)}(x \hat{z}_n(x) + 1)-\psi^{(1)}(\alpha \hat{z}_n(x) + 1)|.$$
 The second term is bounded by
 \begin{align*}
   &\sup_{x\in B_n} |\psi^{(1)}(x \hat{z}_n(x) + 1)-\psi^{(1)}(\alpha \hat{z}_n(x) + 1)| \\
   &= \sup_{x\in B_n} \left|\sum_{i=1}^{\infty}\left(
    \frac{1}{(x \hat{z}_n(x) + i)^2} - \frac{1}{(\alpha \hat{z}_n(x) + i)^2}
   \right)\right|\\
   &= \sup_{x\in B_n} \left|\sum_{i=1}^\infty \frac{(\alpha-x) \hat{z}_n(x) (2 i + (\alpha + x) \hat{z}_n(x))}{(\alpha \hat{z}_n(x) + i)^2 (x \hat{z}_n(x) + i)^2}\right|\\
   &\leq \sup_{x\in B_n}|\alpha-x| \sum_{i=1}^{\infty}
   \frac{\sup_{x\in B_n} 2i|\hat{z}_n(x)| + 2|\hat{z}_n(x)|^2}{(-\alpha + i)^2 (-t + i)^2}\\
   &\leq \sup_{x\in B_n}|\alpha-x| \cdot \left(\sup_{x\in B_n}|\hat{z}_n(x)| + \sup_{x\in B_n}|\hat{z}_n(x)|^2\right)\cdot \sum_{i=1}^{\infty}\frac{2i}{(i-\alpha)^2(i-t)^2} \\
   &\overset{(\star)}{=} \delta_n O_p(1) O(1) = o_p(1),
 \end{align*}
 where $(\star)$ follows from $\sup_{x\in B_n} |\hat{z}_n(x)|= O_p(1)$ ((C) of \Cref{lm:z_n}). This completes the proof of Step 1. 
 
 {Step 2. $\sup_{x\in B_n} |f'_\alpha(\hat{z}_n(x)) - f'_\alpha \circ f_\alpha^{-1}(\log (K_n/n^\alpha))| = o_p(1)$}:
 $f''_\alpha < 0$ on $(-1,\infty)$ (\Cref{lm:f_alpha}) and $\hat{z}'_n < 0$ on $(0,1)$ ((A) of \Cref{lm:z_n}) imply that
 $(f'_\alpha\circ\hat{z}_n)'(x) > 0$, especially that $f'_\alpha\circ\hat{z}_n$ is monotone on $(0,1)$. Then 
 \begin{align*}
     &\sup_{x\in B_n} |f'_\alpha(\hat{z}_n(x)) - f'_\alpha \circ f_\alpha^{-1}(\log (K_n/n^\alpha))|\\
   &\leq |f'_\alpha(\hat{z}_n(\alpha \pm \delta_n)) - f'_\alpha \circ f_\alpha^{-1}(\log (K_n/n^\alpha))|\\
   &\leq |f'_\alpha(\hat{z}_n(\alpha \pm \delta_n)) - f'_\alpha \circ f_\alpha^{-1}(\log \Mit)|+ |  f'_\alpha \circ f_\alpha^{-1}(\log \Mit) - f'_\alpha \circ f_\alpha^{-1}(\log (K_n/n^\alpha))|,
 \end{align*} 
 where the first term is  $o_p(1)$ by $\hat{z}_n(\alpha \pm \delta_n) \to^p f_\alpha^{-1}(\log \Mit)$ ((C) of \Cref{lm:z_n}) and  the continuity of $f_\alpha'$, while the second term is also $o_p(1)$ by $K_n/n^\alpha \rightarrow \Mit$ (a.s.). This completes the proof of Step 2.
   
 {Step 3. $1/{\inf_{x\in B_n}|J_n(x)| = O_p(1)}$}: $f'_\alpha>0$ by \Cref{lm:f_alpha} implies 
 \begin{align*}
   \sup_{x\in B_n} \left| \frac{-J_n(x)}{(f'_\alpha \circ f_\alpha^{-1}(\log (K_n/n^\alpha)))} + 1
   \right| = \frac{\sup_{x\in B_n}|-J_n(x) + f_\alpha'\circ f_\alpha^{-1}(\log (K_n/n^\alpha))|}{f'_\alpha \circ f_\alpha^{-1}(\log (K_n/n^\alpha))},
 \end{align*}
 where the numerator is $o_p(1)$ from 
 Step 1 and Step 2, and the denominator converges to $f'_\alpha \circ f_\alpha^{-1}(\log \Mit) >0$. Thus, the right-hand side is $o_p(1)$, thereby 
 $$\sup_{x\in B_n} \left|
   -J_n(x)/(f'_\alpha \circ f_\alpha^{-1}(\log (K_n/n^\alpha))) + 1
   \right| < 1/2$$ holds with a high probability. 
   Combining this and $f_\alpha'>0$ again, we obtain
   \begin{align*}
   &\forall x\in B_n, \quad \frac{-|J_n(x)|}{(f'_\alpha \circ f_\alpha^{-1}(\log (K_n/n^\alpha)))} + 1
 \leq \left| \frac{-J_n(x)}{(f'_\alpha \circ f_\alpha^{-1}(\log (K_n/n^\alpha)))} + 1
   \right| < \frac{1}{2}\\
     \Rightarrow
     &\forall x\in B_n, \quad  |J_n(x)| > f'_\alpha \circ f_\alpha^{-1}(\log (K_n/n^\alpha))/2\\
     \Rightarrow &({\inf_{x\in B_n}|J_n(x)|})^{-1} < {2}/(f'_\alpha \circ f_\alpha^{-1}(\log (K_n/n^\alpha))).
   \end{align*}
 Putting this together with 
  ${2}/(f'_\alpha \circ f_\alpha^{-1}(\log (K_n/n^\alpha))) \rightarrow 2/f'_\alpha \circ f_\alpha^{-1}(\log \Mit) = O_p(1)$, we get $({\inf_{x\in B_n}|J_n(x)|})^{-1} = O_p(1)$.
\end{document}